\let\mathcal\mathscr
\renewcommand\AA{\mathbb{A}}
\newcommand\QQ{\mathbb{Q}}
\newcommand\NN{\mathbb{N}}
\newcommand\FF{\mathbb{F}}
\newcommand\RR{\mathbb{R}}
\newcommand\R{\mathcal{R}}
\newcommand\B{\mathcal{B}}
\newcommand\EE{\mathcal{E}}
\newcommand\ZZ{\mathbb{Z}}
\newcommand\ZZnz{\ZZ_{\ne 0}}
\newcommand\PP{\mathbb{P}}
\newcommand\x{\mathbf{x}}
\newcommand\y{\mathbf{y}}
\renewcommand\k{\mathbf{k}}
\renewcommand\d{\,\mathrm{d}}
\newcommand\Pone{{\PP_{\QQ}^1}}
\newcommand\Ptwo{{\PP_{\QQ}^2}}
\newcommand\Pfour{{\PP_{\QQ}^4}}
\DeclareMathOperator{\hcf}{gcd}
\DeclareMathOperator{\Pic}{Pic}
\DeclareMathOperator{\rank}{rank}
\DeclareMathOperator{\sign}{sgn}
\DeclareMathOperator{\vol}{vol}
\DeclareMathOperator{\Spec}{Spec}
\DeclareMathOperator{\disc}{Disc}
\newcommand{\cp}[2]{{\hcf(#1,#2)=1}}
\newcommand{\Aone}{{\mathbf A}_1}
\newcommand{\Atwo}{{\mathbf A}_2}
\newcommand{\Afour}{{\mathbf A}_4}
\newcommand{\Dfive}{{\mathbf D}_5}
\newcommand{\Esix}{{\mathbf E}_6}
\newcommand{\tM}{{\widetilde M}}
\renewcommand{\leq}{\leqslant}
\renewcommand{\geq}{\geqslant}
\newcommand{\bnu}{\boldsymbol{\nu}}
\newcommand{\bla}{\boldsymbol{\lambda}}
\newcommand\ve{\varepsilon}
\newcommand\D{\Delta}
\renewcommand\phi{\varphi}
\newcommand\phis{\phi^*}
\newcommand\phid{\phi^\dagger}
\newcommand\phidd{\phi^\ddag}
\newcommand\la{\lambda}
\newcommand\al{\alpha}
\newcommand\be{\beta}
\newcommand{\mcal}{\mathcal}
\newcommand\sfl{\mathsf{\Lambda}}
\newtheorem{lemma}{Lemma}
\newtheorem*{thm*}{Theorem}
\theoremstyle{definition}
\newtheorem*{ack}{Acknowledgements}
\newtheorem*{notat}{Notation}
\newcommand{\odd}[2]{{(#1,#2)_\flat}}
\newcommand{\A}{\mathcal{A}}
\newcommand{\Sab}{\mathcal{S}_{b/a}}
\renewcommand{\rho}{\varrho}
\newcommand\cQ{{\overline{\mathbb{Q}}}}
\DeclareMathOperator{\Gal}{Gal}
\DeclareMathOperator*{\Osum}{\sum{}^\flat}
\DeclareMathOperator{\osum}{\sum{}^\flat}
\DeclareMathOperator\Nef{Nef}
\DeclareMathOperator\rk{rk}
\newcommand\kbar{{\overline k}}
\numberwithin{equation}{section}
\begin{document}
\title[Quartic del Pezzo surfaces with a conic fibration]{Manin's
  conjecture for quartic del Pezzo surfaces with a conic fibration}

\author{R. de la Bret\`eche}
\address{
Institut de Math\'ematiques de Jussieu\\
Universit\'e Paris 7 Denis Diderot\\
Case Postale 7012\\
2, Place Jussieu\\
F-75251 Paris cedex 05\\ France}
\email{breteche@math.jussieu.fr}

\author{T.D. Browning}
\address{School of Mathematics\\
University of Bristol\\ Bristol\\ BS8 1TW\\ United Kingdom}
\email{t.d.browning@bristol.ac.uk}

\date{\today}

\begin{abstract}
An asymptotic formula is established for the number of $\QQ$-rational
points of bounded height on a non-singular quartic del Pezzo surface
with a conic bundle structure.
\end{abstract}

\subjclass{11D45 (14G25)}

\maketitle
\tableofcontents

\section{Introduction}\label{s:intro}

Let $k$ be a number field.
This investigation centres upon the distribution of $k$-rational
points on conic bundle
surfaces $X/\PP_k^1$. These are defined to be projective non-singular surfaces $X$
defined over $k$, which are equipped with a dominant $k$-morphism
$\pi:X\rightarrow \PP_k^1$, all of whose fibres are conics.
A summary of our knowledge concerning the arithmetic of conic
bundle surfaces can be found in the  article by Colliot-Th\'el\`ene \cite{ct-4}.
If $-K_X$ denotes the anticanonical divisor of $X$, then the degree of
$X$ is defined to be the self-intersection number $(-K_X,-K_X)=8-r$,
where $r$ is the number of geometric fibres above $\pi$ that are degenerate.
When $0\leq r\leq 3$ it is known that the Hasse 
principle holds for these surfaces, and 
furthermore, that
such~$X$ are $k$-rational as soon as they possess $k$-rational points.
When $r=4$, so that $X$ has degree $4$, it has been shown by
Iskovskikh \cite[Proposition~1]{isk}
that two basic cases arise. Either $-K_X$ is not 
ample, in which case $X$ is $k$-birational to
a generalised Ch\^atelet
surface, or else $-K_X$ is ample, in which case $X$ is a non-singular
quartic del Pezzo
surface.

Our interest lies with the quantitative arithmetic of
degree $4$ conic bundle surfaces. When $X(k)\neq \emptyset$ and 
$H: X(k)\rightarrow \RR_{\geq 0}$ is a height 
associated to $-K_X$, we seek to
determine the  asymptotic behaviour of the counting function
$$
N_{U,H}(B):=\#\{x \in U(k): H(x) \leq B\},
$$
as $B\rightarrow \infty$, for a suitable Zariski open subset $U\subseteq X$.
The conjecture that drives our work is due to
Manin \cite{f-m-t}. Let $\Pic X$ be the Picard 
group of  $X$. Still under the assumption that
$X(k)$ is non-empty, this conjecture
predicts the existence of a positive constant $c_{X,H}$ such that
\begin{equation}\label{manin}
N_{U,H}(B) = c_{X,H} B (\log B)^{\rank (\Pic X) -1}\big(1+o(1)\big),
\end{equation}
as $B \rightarrow \infty$. Peyre  \cite{p} has given a conjectural
interpretation of $c_{X,H}$ in terms of the geometry of
$X$.  Until very recently we were not in possession of a
single conic bundle surface of degree $4$ for 
which this refined conjecture could be
established.

Henceforth we will be interested in the case $k=\QQ$.
In joint work with Peyre~\cite{bbp}, the authors have made a
study of generalised Ch\^atelet surfaces, ultimately
establishing \eqref{manin} for a family of such surfaces that fail
to satisfy weak approximation.
The aim of the present investigation is
to produce a satisfactory treatment of a non-singular del
Pezzo surface of degree $4$ with a conic bundle structure which admits
a section over $\QQ$. Such surfaces are defined as the intersection
of two quadrics in $\Pfour$. When $X(\QQ)\neq \emptyset$, we may
assume that  $X \subset \Pfour$ is cut out by the system
\begin{equation}
   \label{eq:bundle}
\begin{cases}
   \Phi_1(x_0,\ldots,x_4):=x_0x_1-x_2x_3=0,\\
\Phi_2(x_0,\ldots,x_4)=0,
\end{cases}
\end{equation}
for quadratic forms 
$\Phi_1,\Phi_2\in\ZZ[x_0,\ldots,x_4]$ such that 
the Jacobian
matrix $(\nabla \Phi_1, \nabla \Phi_2)$ has full rank throughout
$X$. A proof of this familiar fact can be found in 
\cite[Lemma~2.2]{ferran}, for example.

Let $\|\cdot\|:\RR^5\rightarrow \RR_{\geq 0}$ be a norm.
Given a point $x=[\x] \in \Pfour(\QQ),$ with
$\x=(x_0,\ldots,x_4) \in \ZZ^5$ such that $\hcf(x_0,\ldots,x_4)=1$, we let
$H(x):=\|\x\|$. Then $H$ is
the anticanonical height metrized by the choice of norm.
Any line contained in $X$
that is defined over $\QQ$ will automatically contribute
$cB^2+O(B\log B)$ to $N_{U,H}(B)$, for an appropriate constant
$c>0$. Thus one takes $U\subset X$ to be the open
subset formed by deleting the $16$ lines from $X$.

The best evidence that we have for \eqref{manin} in the setting of
non-singular  surfaces of the shape \eqref{eq:bundle}
is due to Salberger. In
work communicated  at the conference ``Higher 
dimensional varieties and rational
   points'' at Budapest in 2001, he establishes the upper bound
\begin{equation}
   \label{eq:salb}
N_{U,H}(B)=O_{X}(B^{1+\ve}),
\end{equation}
for any $\ve>0$.  Here, as throughout our work, we allow the implied
constant to depend on the choice of $\ve$. 
The Manin conjecture has received a great deal of
attention in the context of singular del Pezzo surfaces of degree $3$
and $4$. An account of recent progress can be found in the
second author's work \cite{ferran}.
There is general agreement among researchers that the
level of difficulty in establishing the expected asymptotic formula
for del Pezzo surfaces increases as the degree 
decreases or as the singularities
become milder. Among the non-singular del Pezzo surfaces, those of
degree at least $6$ are all toric and so are handled by the work of
Batyrev and Tschinkel \cite{b-t}. In \cite{dp5} the first author gave
the earliest satisfactory treatment of a non-singular del Pezzo surface
of degree $5$. As highlighted by Swinnerton-Dyer \cite[Question
15]{swd-prog}, it has become something of milestone to establish the
Manin conjecture for a single non-singular del Pezzo surface of
degree $3$ or $4$.

In this paper we will be concerned with a quartic del Pezzo surface
$X\subset \Pfour$ of the shape \eqref{eq:bundle}, with
\begin{equation}\label{eq:phi2}
\Phi_2(\x):=x_0^2+x_1^2+x_2^2-x_3^2-2x_4^2.
\end{equation}
In particular $X$ contains obvious lines defined
over $\QQ$, as we shall see in \S \ref{s:local},
from which it follows that $X$ is $\QQ$-rational. The latter fact
is recorded by Colliot-Th\'el\`ene,
Sansuc and Swinnerton-Dyer \cite[Proposition~2]{ct1}, for example. If
$L$ is a line in $X$ defined over $\QQ$ then it
is a simple consequence of the fact that any plane through $L$ must cut out a
pair of lines $L,L_i$ on each quadric $\Phi_i=0$ defining $X$, and the
intersection $L_1\cap L_2$ is a $\QQ$-point contained in $X$.

Let $\|\cdot\|$ be the norm on $\RR^5$
given by
\begin{equation}
   \label{eq:norm}
\|\x\|:=\max\Big\{|x_0|,|x_1|,
|x_2|,|x_3|,\sqrt{\frac{2}{3}}|x_4|\Big\}.
\end{equation}
All that is required of this norm is that
$\|\x\|=\max\{|x_0|,|x_1|,|x_2|,|x_3|\}$ for every
$\x=(x_0,\ldots,x_4)\in\RR^5$
such that $[\x] \in X$, and furthermore
$\|\x^\sigma\|=\|\x\|$, where
$\x^\sigma$ is the vector obtained by permuting the variables
$x_0,\ldots,x_3$ and leaving $x_4$ fixed.
It would  be possible to work instead with the norm
$|\x|:=\max\{|x_0|,\ldots, |x_4|\}$, but not without
introducing extra technical difficulties that we 
wish to suppress in the present investigation.
We are now ready to reveal our main result.

\begin{thm*}
We have
$$
N_{U,H}(B)=c_{X,H} B(\log B)^4+O\Big( \frac{B(\log B)^4}{\log\log B} \Big),
$$
where $c_{X,H}>0$ is the constant predicted by Peyre.
\end{thm*}

During the final preparation of this paper, the authors have learnt of
independent work by Fok-Shuen Leung \cite{fok} on the conic bundle
surface given by \eqref{eq:bundle} and \eqref{eq:phi2}. This sharpens
Salberger's estimate in \eqref{eq:salb}, 
ultimately providing upper and lower bounds
for $N_{U,H}(B)$ that are of the expected order of magnitude. Our result
supersedes this, and
confirms the estimate predicted
by Manin and Peyre in \eqref{manin} for the non-singular quartic del Pezzo
surface under consideration. The fact that the
Picard group has rank $5$, as needed to verify the power of $\log B$,
will be established in due course.

The proof of our theorem is long and complicated. We therefore dedicate the
remainder of this introduction to surveying some of its key
ingredients and indicating some obvious lines for further enquiry.
Given any non-singular surface defined by the system
\eqref{eq:bundle},  it is possible to define a pair of conic bundle
morphisms $f_i:X\rightarrow \Pone$, for $i=1,2$.
Specifically, for any $x\in X$,  one takes
$$
f_1(x)=
\begin{cases}
  [x_0,x_2] , & \mbox{if $(x_0,x_2)\neq(0,0)$,} \cr
[x_3,x_1] , &  \mbox{if $(x_1,x_3)\neq(0,0)$,} 
\end{cases}
$$
and
$$
f_2(x)=
  \begin{cases}
  [x_0,x_3] , & \mbox{if $(x_0,x_3)\neq(0,0)$,} \cr
   [x_2,x_1] , & \mbox{if $(x_1,x_2)\neq(0,0)$.}
\end{cases}
$$
For a given point $x\in X(\QQ)$ of height $H(x)\leq B$,
it follows from the general theory of height functions that
there exists
an index $i$ such that
$x\in f_i^{-1}(t)$ for some $t\in \Pone(\QQ)$ of height
$O(B^{1/2})$.
The idea is now to count rational points of bounded height on the
fibres $f_1^{-1}(t)$ and $f_2^{-1}(t)$, uniformly for
points $t\in \Pone(\QQ)$ of height
$O(B^{1/2})$.  This is the strategy adopted by Salberger in his proof
of \eqref{eq:salb}.

In the present situation, with the quadratic form \eqref{eq:phi2},
the fibres that we need to examine have the shape
\begin{equation}
   \label{eq:conic}
C_{a,b}: \quad (a^2-b^2)x^2+(a^2+b^2)y^2=2z^2,
\end{equation}
for coprime $a,b\in\ZZ$.
It is clear that $C_{a,b}\subset\Ptwo$ is a non-singular plane
conic when the discriminant $\D(a,b)=-2(a^4-b^4)$ is non-zero.
The reduction of the counting problem to one involving the family of
conics \eqref{eq:conic} is carried out in \S \ref{s:prelim}, where we
have avoided using the height machinery by doing things in a
completely explicit manner.

As is well-known there is a group homomorphism $\Pic X\rightarrow \ZZ$,
which to a divisor class $D \in \Pic X$ associates the intersection
number of $D$ with a fibre. The kernel of this map is generated by the
``vertical'' divisors, which up to linear equivalence are the
irreducible components of the fibres.  Since the
non-singular fibres are all linearly equivalent, it follows that $\Pic
X$ has rank $2+n$, where $n$ is the number of split singular fibres
above closed points of $\Pone$. In our case there are three closed
points, corresponding to the irreducible factors $a-b, a+b$ and
$a^2+b^2$ of $\D(a,b)$. Since each singular fibre is split it
follows that $\Pic X\cong \ZZ^5$, as previously claimed.

For fixed 
$[a,b]\in \Pone(\QQ)$ for which the conic 
$C_{a,b}$ is non-singular and has a $\QQ$-point, the
number of $\QQ$-points of height $B$ is asymptotically $c_{a,b}B$,
with a constant $c_{a,b}$ depending on $a,b$. 
As the size of $a,b$ increases with $B$, however,
one finds that $c_{a,b}$ decreases in magnitude so that there are
fewer $\QQ$-points of height $B$ on the conic overall. 
The preliminary reduction to
$[a,b]\in~\Pone(\QQ)$ of height $O(B^{1/2})$ is absolutely pivotal
here, since it is only through this device that we can cover $U(\QQ)$
with a satisfactory number of divisors.
Were we charged instead with establishing an upper bound
like~\eqref{eq:salb}, our analysis would now be 
relatively straightforward, thanks
to the control over the growth rate of rational
points on conics afforded by the second
author's joint work with Heath-Brown \cite[Theorem~6]{bhb}. A key
aspect of this estimate is that it is
uniform in the height of the conic, becoming sharper as the
discriminant grows larger.
In \S \ref{s:ham} we will take advantage of these arguments to
eliminate certain awkward ranges for $a,b$ in \eqref{eq:conic}.

Obtaining an asymptotic formula is a far more exacting task.
Using the large sieve inequality Serre \cite{serre} has shown
that most plane conics defined over $\QQ$ don't contain rational
points.  This phenomenon  might pose problems for us, given that we want
a uniform asymptotic formula for a Zariski dense set of rational
points on the fibres. Our choice of surface has been tailored to
guarantee that this doesn't happen, since the corresponding fibres
\eqref{eq:conic} always contain the
rational point
\begin{equation}
   \label{eq:xi}
\xi=[1,-1,a].
\end{equation}
In the classical manner we can use this point to parametrise
all of the rational points on the conic, which ultimately leads us to
evaluate asymptotically the number of points belonging to a
$2$-dimensional sublattice $\sfl\subset \ZZ^2$ which are constrained to
lie in an appropriate region $\R\subset \RR^2$.
Both $\sfl$ and $\R$ depend on the parameters $a$ and $b$,  so this
estimate needs to be achieved with a sufficient degree of uniformity.
Assuming that $\R$ has piecewise continuous boundary, we would ideally
like to apply the familiar estimate
\begin{equation}
   \label{eq:classic}
\#(\sfl\cap \R)=\frac{\vol(\R)}{\det \sfl} + O(\partial \R+1),
\end{equation}
where $\partial \R$ denotes the perimeter of $\R$ and the implied
constant is absolute. 
Unhappily this estimate is too crude for our purposes. We will use
Poisson summation to make the error term explicit in
\eqref{eq:classic}. 
The reduction of the problem to a lattice point 
counting problem
is carried out in \S \ref{sec:par} and its execution is the subject of 
\S\S \ref{sec:1}--\ref{s:2}.

The one outstanding task, which is the focus of
\S \ref{s:3}, is to evaluate asymptotically the main term
arising from the lattice point counting problem. 
It turns out that this involves a sum of the shape
$$
\sum_{\substack{(a,b) \in \ZZ^2\cap \R\\ \gcd(a,b)=1}} \frac{g(|a^4-b^4|)}{\max\{a,b\}^2},
$$
for a certain multiplicative arithmetic function $g$ that is very similar
to the ordinary divisor function $\tau(n):=\sum_{d\mid n}1$.
The problem of determining the average order of $\tau$
as it ranges over the values of polynomials has enjoyed considerable
attention in the literature. In the setting of binary forms current
technology has limited us to handling forms of degree at most $4$.
When $g=\tau$ Daniel \cite{daniel} has
dealt with the case of irreducible binary quartic forms.
We need to extend this
argument to deal with
a more general class of arithmetic functions and to binary quartic forms
that are no longer irreducible.  We have found it convenient to
corral the necessary estimates into a separate investigation
\cite{L1L2Q}, which is of a more technical nature.
The facts that we will need are recalled in \S \ref{s:tech}.

Our main goal in this paper is to outline  a general strategy for
proving the Manin conjecture for non-singular del Pezzo surfaces of
degree $4$ equipped with a conic bundle structure admitting a section
over $\QQ$. In order to minimise the
length and technical difficulty of the work, which is already
considerable, we have chosen to illustrate the approach by selecting
a concrete surface to work with. It is likely that the
argument we present can be adapted to handle 
other conic bundle surfaces. For example, 
consider the family
of surfaces \eqref{eq:bundle}, with
$$
\Phi_2(\x):=c_0x_0^2+c_1x_1^2+c_2x_2^2+c_3x_3^2+c_4x_4^2.
$$
It is easy to check that $X$ will be non-singular if and only if
$c_0\cdots c_4 \neq 0$ and $c_0c_1\neq c_2c_3$. 
As we've already mentioned, our success with \eqref{eq:phi2} is closely
linked to the existence of an obvious rational point
\eqref{eq:xi} on all of the fibres \eqref{eq:conic}. This is 
equivalent to the map $X(\QQ)\rightarrow 
\Pone(\QQ)$ being surjective. A necessary and sufficient 
condition for ensuring this is that the morphism $X\rightarrow \Pone$
should admit a section over $\QQ$. This fact has a long history and
can be traced back to work of Lewis and Schinzel \cite{LS}. 
Even when the conic bundle surface 
admits a section, however, there remains
considerable work to be done in handling a general diagonal form
$\Phi_2$ as above. The corresponding fibres will take the shape
$$
f_1(a,b)x^2+f_2(a,b)y^2+c_4z^2=0,
$$
for binary forms $f_1(a,b)=c_0a^2+c_3b^2$ and
$f_2(a,b)=c_2a^2+c_1b^2$.
Thus, at the very least, one needs analogues of
our investigation \cite{L1L2Q} for the case in which $f_1f_2$
factors over $\QQ$ as the product
of two quadratic forms or splits completely.

\begin{notat}
Throughout our work $\NN$ will denote the set of positive
integers.  If $a,b\in \NN$ then we write 
$\gcd(a,b)$ for the greatest 
common divisor of $a,b$ and $[a,b]=ab/\gcd(a,b)$ for the
least common multiple. We set 
\begin{equation}
   \label{eq:odd-notat}
\odd{a}{b}:= 2^{-\nu_2(\hcf(a,b))}\hcf(a,b)
\end{equation}
for the odd part of the greatest common divisor, and use
the symbol $\osum$ to indicate a
summation in which all the variables of summation are
restricted to odd integers. We will let $\lfloor\alpha \rfloor$ denote
the integer part of any number $\alpha\in \RR$.
Furthermore,  our work will involve the arithmetic functions
\begin{equation}
   \label{eq:funcp}
\phi(n):=n\prod_{p\mid n}\Big(1-\frac{1}{p}\Big),\quad
\phis(n):=\frac{\phi(n)}{n},\quad
\phid(n):=\prod_{p\mid n}\Big(1+\frac{1}{p}\Big),
\end{equation}
and 
\begin{equation}
   \label{eq:funcpp}
\phidd(n):=\prod_{p^\sigma\| n}\Big(1+\frac{\sigma^5}{p}\Big).
\end{equation}
A simple convolution argument  shows that 
\begin{equation}
  \label{eq:1}
\sum_{n\leq x}\phis(n)^A   \leq \sum_{n\leq x}\phid(n)^A
 \leq \sum_{n\leq x}\phidd(n)^A
 \ll_A x,  
\end{equation}
for any $A>0$. 
In terms of the parameter $B$, we set
\begin{equation}
   \label{eq:zet}
   Z_1:=B^{1/\log\log B}, \quad Z_2:=\log\log B.
\end{equation}
We will reserve $c>0$ for a generic absolute positive
constant, whose value is always effectively computable, and $\ve$ will
denote a small positive parameter whose value may vary, so that
$x^\ve \log x=O(x^{\ve})$, for example. 
Finally we will follow the convention that all
implied constants in this paper 
are allowed to depend on the parameters $c$ and
$\ve$, with any other dependence explicitly indicated with an
appropriate subscript.
\end{notat}

\begin{ack}
This investigation was undertaken while the second author was visiting
the first author at the {\it Universit\'e Paris 6 Pierre et Marie Curie} and the
{\it Universit\'e Paris 7 Denis Diderot}. The hospitality and financial support
of these institutions is gratefully acknowledged. 
It is a pleasure to thank Olivier
Wittenberg for useful conversations relating to the geometry
of conic bundle surfaces and the anonymous referees for numerous
pertinent remarks that have greatly improved the exposition of the
paper. 
While working on this paper the second author was supported
by EPSRC grant number \texttt{EP/E053262/1} and the work has also 
received the financial support of the ANR project
{\em Points entiers points rationnels}.
\end{ack}

\section{Technical results}\label{s:tech}

Our work requires a number of auxiliary results, ranging from basic
estimates using the geometry of numbers to more sophisticated results
concerning the divisor problem for binary quartic forms.

\subsection{Counting rational points on curves}

As made clear in the introduction to this paper, our
proof of the theorem uses the conic bundle structure
in order to focus the effort on a family of 
curves of low degree and rather low height.
The following  result is due to Heath-Brown \cite[Lemma
2]{square}, 
and deals with the situation for lines in
$\Ptwo$, the rational points on which basically correspond to
integer lattices of rank $2$.

\begin{lemma}\label{lem:lattice} 
Let $\sfl \subseteq \ZZ^{2}$ be a lattice of rank $2$ and determinant
  $\det\sfl$, and let $E\subset \RR^2$ be an ellipse, centered at the
  origin, together with its interior. Then we have
\[
\#\{\x\in \sfl\cap E: \cp{x_1}{x_2}\}
\ll 1+ \frac{\vol(E)}{\det\sfl}.
\]
\end{lemma}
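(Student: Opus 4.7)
The plan is to exploit an arithmetic rigidity property of primitive lattice vectors in $\sfl$. First, I would observe that if $\x, \y \in \sfl$ are two linearly independent primitive vectors of $\ZZ^2$, then the sublattice $\ZZ \x + \ZZ \y$ is contained in $\sfl$, so its determinant $|\det(\x, \y)|$ is a positive integer multiple of $\det \sfl$; in particular $|\det(\x, \y)| \geq \det \sfl$. Combining this with Hadamard's inequality $|\det(\x, \y)| \leq |\x| \cdot |\y|$ shows that no disk of radius $T < \tfrac{1}{2}\sqrt{\det \sfl}$ can contain two linearly independent primitive vectors of $\sfl$. Since any line through the origin carries at most two primitive vectors of $\ZZ^2$ (namely $\pm \x$ for the primitive generator $\x$ of that direction), we conclude that any such disk contains $O(1)$ primitive vectors of $\sfl$ in total.

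Armed with this separation property, I would tile $\RR^2$ by disjoint axis-aligned squares of side $L := \tfrac{1}{2}\sqrt{\det \sfl}$, each of diameter strictly less than $\sqrt{\det \sfl}$, and bound
\[
\#\{\x \in \sfl \cap \R : \cp{x_1}{x_2}\} \ll \#\{\text{occupied squares of the tiling}\},
\]
where a square is occupied if it contains at least one primitive vector of $\sfl \cap \R$. A packing argument would then control the number of occupied squares: since each occupied square is pairwise disjoint from the others and has area $L^2 \asymp \det \sfl$, and since it must contain a genuine point of $\R$, the total area covered by occupied squares is comparable to $\vol(\R)$ up to one boundary square per connected component of $\partial \R$, giving the desired bound of $1 + \vol(\R)/\det \sfl$.

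The main obstacle is rigorously implementing the packing step for an arbitrary region $\R$ with only piecewise continuous boundary: naively one would expect an extra perimeter term $\partial \R/\sqrt{\det \sfl}$ from counting squares that merely clip the boundary of $\R$, and such a term is not allowed by the target bound. The coprimality hypothesis is the essential ingredient that absorbs this term: without it, a long thin strip aligned with the shortest vector of $\sfl$ would accommodate many lattice points of negligible ambient volume. The primitivity constraint rules out precisely such configurations, since each cluster of primitive vectors is forced by the pairwise determinant lower bound to carry a proportionate amount of ambient $\RR^2$-area with it, making the crude packing count tight up to absolute constants.
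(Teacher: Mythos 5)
Your first observation is sound: two linearly independent points $\x,\y\in\sfl$ generate a finite-index sublattice of $\sfl$, so $|\det(\x,\y)|\ge\det\sfl$. The gap is in the very next step. Hadamard's inequality bounds $|\det(\x,\y)|$ by $|\x|\,|\y|$, i.e.\ by the product of the distances of the two points from the \emph{origin}, not by anything involving their mutual distance. Hence your separation statement is only valid for a disk of radius $T<\tfrac12\sqrt{\det\sfl}$ centred at the origin, whereas the squares of your tiling are translates far from the origin, where it fails badly. Concretely, take $\sfl=\ZZ\times q\ZZ$ with $q$ prime, so that $\det\sfl=q$: the points $(x,q)$ with $1\le x<q$ are all primitive, any two of them are linearly independent with $|\det|=q|x-x'|\ge\det\sfl$, and yet they are spaced at distance $1$, so a single axis-parallel square of side $\tfrac12\sqrt q$ placed at height $q$ contains roughly $\tfrac12\sqrt q$ of them. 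An "occupied" tile therefore need not contain $O(1)$ primitive points and the packing count collapses. The heuristic in your closing paragraph --- that the pairwise determinant bound forces each cluster of primitive points to carry area $\asymp\det\sfl$ --- is exactly what this example refutes: the determinants are large because the points are far from the origin, not because they are far from each other. What primitivity really buys is that each line \emph{through the origin} carries at most two such points, which controls thin strips through the origin only.

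The same example shows that no translation-invariant tiling argument can possibly work, because for a completely arbitrary region the stated bound is not literally true: a thin rectangle around the segment from $(1,q)$ to $(N,q)$, $N<q$, contains $N$ primitive points of $\ZZ\times q\ZZ$ while its area can be made arbitrarily small. The lemma is quoted (without proof) from Heath-Brown, whose statement concerns discs or boxes anchored at the origin --- equivalently, one should read $\vol(\R)$ as the area of the convex hull of $\R\cup\{0\}$ --- and it is only ever applied in that form in this paper. The standard proof uses the origin globally rather than locally: keep at most one primitive point per direction (a loss of a factor $2$), order the survivors in $\sfl\cap\R$ by argument, and observe that two angularly consecutive, linearly independent points $\x,\y$ span with the origin a triangle of area $\tfrac12|\det(\x,\y)|\ge\tfrac12\det\sfl$; these triangles have pairwise disjoint interiors and lie in the convex hull of $\R\cup\{0\}$, which immediately gives the bound $\ll 1+\vol(\R)/\det\sfl$ in that setting (alternatively one can argue with a reduced basis of $\sfl$). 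Your determinant observation is the right starting point, but it must be combined with the position of the points relative to the origin, not with a tiling of the plane.
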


Our next uniform upper bound
is extracted from joint work of the second author with Heath-Brown
\cite[Corollary 2]{bhb}, and handles the case of non-singular plane conics.

\begin{lemma}\label{lem:bhb}
Let $C\subset \Ptwo$ be a non-singular conic. Assume that the underlying
quadratic form has  matrix of determinant $\Delta$, and
that the $2\times 2$ minors have
greatest common divisor $\Delta_0$. Then we have
$$
\#\left\{\x\in C\cap \ZZ^3:
\begin{array}{l}
\hcf(x_1,x_2,x_3)=1,\\
|x_i|\leq B_i, (1\leq i\leq 3)
\end{array}
\right\}
\ll \tau(|\Delta|)\Big(1+ \frac{B_1B_2B_3\Delta_0^{3/2}}{|\Delta|}\Big)^{1/3}.
$$
\end{lemma}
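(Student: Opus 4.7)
The plan is to prove this via Heath-Brown's $p$-adic determinant method as developed in \cite{bhb}.

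First, I would reduce to the \emph{primitive} case $\Delta_0 = 1$ by a preliminary unimodular substitution. The hypothesis that all $2\times 2$ minors of the Gram matrix are divisible by $\Delta_0$ permits a rescaling of coordinates which absorbs this invariant into an effective change of box dimensions, yielding the factor $\Delta_0^{3/2}/|\Delta|$ in the final bound.

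In the primitive case I would split according to whether $V := B_1B_2B_3/|\Delta|$ is $\leq 1$ or $\geq 1$. If $V \leq 1$ the target is the bound $O(\tau(|\Delta|))$, obtained by sorting primitive integer points on $C$ according to their reductions modulo the primes dividing $\Delta$: at such a prime $p$ the ternary form has rank at most $2$, its zero locus in $\PP^2_{\mathbb{F}_p}$ is a pair of (possibly coincident) lines, and primitivity confines integer points to $O(1)$ residue classes modulo $p$. Multiplying over the prime divisors of $\Delta$ produces the $\tau(|\Delta|)$ factor, while within each class the curve structure of $C$ together with $V \leq 1$ limits the number of points to $O(1)$.

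In the regime $V \geq 1$ I would exploit the good primes $p \nmid 2\Delta$. The smooth conic $C \bmod p$ has exactly $p+1$ points over $\mathbb{F}_p$, and the integer lifts of each such point form a rank-$3$ sublattice $\sfl_\xi \subset \ZZ^3$ of determinant $\asymp p^2$. A three-dimensional analogue of Lemma \ref{lem:lattice}, combined with the algebraic constraint that the points in question lie on $C$, would bound the number of primitive integer points of $\sfl_\xi$ in the box by $O((B_1B_2B_3/p^2)^{1/3})$ in the relevant range. Summing over the $p+1$ residue classes and taking $p \asymp V^{1/3}$ (or averaging over primes in a short interval of that size) would deliver the asserted exponent $1/3$, which then combines multiplicatively with the $\tau(|\Delta|)$ factor from the bad primes.

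The main obstacle is the refined lattice count inside each $\sfl_\xi$: the trivial estimate $O(1 + B_1B_2B_3/p^2)$ from volume considerations alone gives the wrong exponent $1/2$ after optimisation in $p$. To recover the correct exponent $1/3$, one must use the quadratic equation defining $C$ as a genuine algebraic constraint on $\sfl_\xi$, or else combine congruence data across several primes simultaneously — both strategies requiring a more delicate application of the determinant method than direct appeal to Lemma \ref{lem:lattice}.
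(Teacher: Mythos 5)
The paper offers no proof of this lemma at all: it is quoted verbatim from Browning and Heath-Brown \cite[Corollary 2]{bhb}. So you are in effect trying to reprove that external result, and as written your argument does not close. Two concrete problems. First, the opening reduction to $\Delta_0=1$ cannot be achieved by a unimodular substitution: if $\x\mapsto U\x$ with $U\in\mathrm{GL}_3(\ZZ)$, the adjoint of the Gram matrix, whose entries are (up to sign) the $2\times 2$ minors, transforms by the adjoint of $U$, which is again unimodular, so $\Delta_0$ is an invariant of the form; the factor $\Delta_0^{3/2}$ must be produced inside the argument, not normalised away at the outset. Second, the bookkeeping in the regime $V\geq 1$ (with your $V=B_1B_2B_3/|\Delta|$) cannot deliver the stated bound: summing a per-class count of order $(B_1B_2B_3/p^2)^{1/3}$ over the $p+1$ classes of $C(\mathbb{F}_p)$ gives roughly $p^{1/3}(B_1B_2B_3)^{1/3}$, which grows with $p$, already exceeds $(B_1B_2B_3)^{1/3}$, and contains no saving in $|\Delta|$ whatsoever; taking $p\asymp V^{1/3}$ therefore cannot reach the target $(B_1B_2B_3\Delta_0^{3/2}/|\Delta|)^{1/3}$, which may be far smaller. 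An auxiliary prime $p\nmid 2\Delta$ can at best reproduce a discriminant-free bound of the shape $(B_1B_2B_3)^{1/3+\varepsilon}$ (for $p$ large enough one forces collinearity within each class, hence at most two points per class); the saving by $|\Delta|$ has to come from the discriminant of the conic itself.

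That missing saving is exactly the step you yourself flag as open in your final paragraph, and it is the actual content of the cited result, so the proposal is a programme rather than a proof. The same gap affects the small-box case: ``the curve structure of $C$ together with $V\leq 1$ limits the number of points to $O(1)$'' is the assertion to be proved, and no mechanism is offered. The mechanism behind bounds of this type is intrinsic to the conic: writing $M$ for the underlying matrix, any three integer points $\x_1,\x_2,\x_3$ on $C$ satisfy $\det(\x_1,\x_2,\x_3)^2\,\Delta=2\prod_{i<j}\x_i^{T}M\x_j$, so once the gcd data at divisors of $\Delta$ has been extracted (this is where $\tau(|\Delta|)$ and $\Delta_0$ enter), a box that is small relative to $|\Delta|$ forces any three points to be collinear, and a line meets $C$ in at most two points. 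To make your outline work you would need to replace the auxiliary-prime optimisation by an argument of this kind; as it stands the central estimate is absent and the announced optimisation would not yield the lemma even if the per-class bound were granted.
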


Taking $B_1=B_2=B_3=B$ in Lemma \ref{lem:bhb}, we retrieve the
well-known fact that a non-singular 
plane conic $C\subset \Ptwo$ contains $O_C(B)$ 
rational points of height $B$.

\subsection{Generalisation of Nair's lemma}

In the setting of polynomials in only one variable there is a
well-known result due to Nair \cite{mnair} which provides upper bounds
for the average order of suitable non-negative arithmetic functions as they ranges over the values
of the polynomial. Taking advantage of the authors' refinement \cite{nair} of this work
we have the following result.

\begin{lemma}\label{lem:sky}
Let $\ve>0$, let $a\in\NN$ and let $\kappa\in \{0,1\}$.  Let
$\delta\in [0,1)$ and define $\tau'$ multiplicatively via
$$
\tau'(p^\nu) :=
\begin{cases}
1+\nu, & \mbox{if $p>2$,}\\
(1+\nu)^2, & \mbox{if $p=2$.}
\end{cases}
$$
Then for $x\gg a^\ve$ there exists an absolute constant $c>0$ such
that 
$$
\sum_{n \leq x}  \frac{\tau'(n)^{\kappa}\tau'(n^2+a)}{n^\delta} \ll 
\phidd(a)^c x^{1-\delta} (\log x)^{1+\kappa},
$$
where $\phidd$ is given by \eqref{eq:funcpp}.
\end{lemma}

\begin{proof}
Let us denote by 
$S_{\delta,\kappa}(x)$ the sum that is to be estimated. We begin by
dealing with the case $\delta=0$. 
Define the multiplicative arithmetic function
$$
\tau''(p^\nu) :=
\begin{cases}
2, & \mbox{if $\nu=1$ and $p>2$,}\\
4, & \mbox{if $\nu=1$ and $p=2$,}\\
(1+\nu)^4, & \mbox{if $\nu\geq 2$,}
\end{cases}
$$
for any prime power $p^\nu$. We have
$\tau'(n_1)\tau'(n_2)\leq \tau''(n_1n_2)$ for any $n_1,n_2\in \NN$, whence
$$
S_{0,\kappa}(x)\leq \sum_{n \leq x}  \tau''(n^{\kappa}(n^2+a)).
$$
Now for any $a \in \NN$ it is clear that the polynomial $f(t)=t^\kappa
(t^2+a)$ has degree $2+\kappa$, that it has discriminant
$\D_f=-4a^{1+2 \kappa}$ and that it has no fixed prime divisor if
$\kappa=0$ or $a$ is even. If $\kappa=1$ and $a$ is odd then $2$ is a
fixed prime divisor of $f$. But then we may break the sum into two
sums according to whether $n$ is even or odd and make a corresponding
change of variables, absorbing the additional factor $\tau''(2)=4$ into
an implied constant. 
An application of \cite[Theorem 2]{nair} with $\delta=\ve$ 
now reveals that
$$
S_{0,\kappa}(x)\ll x \prod_{p\leq x}\Big(1-\frac{\rho_f(p)}{p}\Big)
  \sum_{m \leq x}  \frac{\tau''(m)\rho_f(m)}{m},
$$
for $x \gg a^\ve$, where $\rho_f(m)$ denotes the number of roots
modulo  $m$ of the congruence $f(n)\equiv 0 \bmod{m}$.  It follows
from work of 
Nagell \cite[Paragraph 27]{nagell} that we have
$\rho_f(p^\nu) \leq 3\min\{p^{\nu-1},p^{2\nu_p(\D_f)}\}$ for
any prime power, whence
$\rho_f(p^\nu) \leq 3$ if $p\nmid \D_f$. We deduce that
\begin{align*}
  \sum_{m \leq x}  \frac{\tau''(m)\rho_f(m)}{m}
&\leq \prod_{p\leq
  x}\Big( 
1+\frac{\tau''(p)\rho_f(p)}{p}+\sum_{\nu\geq 2} 
\frac{\tau''(p^\nu)\rho_f(p^\nu)}{p^\nu}\Big)\\
&\leq \exp\Big(\rho_f(2)+\sum_{p\leq
  x}\frac{2\rho_f(p)}{p}+\sum_{p\leq 
x}\sum_{\nu\geq 2} 
\frac{(1+\nu)^4\rho_f(p^\nu)}{p^\nu}\Big).
\end{align*}
Now
\begin{align*}
\sum_{p\leq 
x}\sum_{\nu\geq 2} 
\frac{(1+\nu)^4\rho_f(p^\nu)}{p^\nu}
&\leq
\sum_{p^\sigma\|  \Delta_f}
\Big(
\sum_{2\leq \nu\leq 2\sigma} 
\frac{3(1+\nu)^4}{p}
+
\sum_{\nu>2\sigma} 
\frac{3(1+\nu)^4p^{2\sigma}}{p^\nu}
\Big)\\
&\ll \sum_{p^\sigma\|  \Delta_f}\frac{\sigma^5}{p}
\ll \sum_{p^\alpha\|  a}\frac{\alpha^5}{p}.
\end{align*}
Hence  we obtain 
\begin{align*}
 \sum_{m \leq x}  \frac{\tau''(m)\rho_f(m)}{m}
&\ll \phidd(a)^c\exp\Big(\sum_{p\leq
  x}\frac{2\rho_f(p)}{p}\Big),
\end{align*}
for a suitable absolute constant $c>0$. On noting that
$
\rho_f(p)=1+\kappa+(\frac{-a}{p}),
$
for $p>2$, this therefore concludes the 
proof that 
$$
S_{0,\kappa}(x) \ll 
\phidd(a)^c x (\log x)^{1+\kappa},
$$
which is satisfactory for the lemma. 

When $\delta>0$ we invoke partial summation. This yields
$$
S_{\delta,\kappa}(x)= \frac{S_{0,\kappa}(x)}{x^\delta}+\delta
\int_1^x \frac{S_{0,\kappa}(t)}{t^{\delta+1}}\d t
\ll \phidd(a)^c x^{1-\delta} (\log x)^{1+\kappa},
$$
as required. 
\end{proof}

Note that the estimate in Lemma \ref{lem:sky} is also valid for the
divisor function $\tau$, since $\tau(n)\leq \tau'(n)$ for every $n \in
\NN$. 
We will also need a version of Nair's lemma for binary forms
and the generalised divisor function
$\tau_k(n):=\sum_{n=d_1\cdots d_k}1$.

\begin{lemma}\label{lem:nair}
Let $\ve>0$ and $A,B\geq 2$ with 
$\min\{A,B\}\geq \max\{A,B\}^\ve$. Then we have 
$$
\sum_{|a| \leq A, ~|b| \leq B}  \tau_k (|a^4-b^4|) \ll_{k}
AB(\log AB)^{3k-3}.
$$
Furthermore, when $A=B$ and $p> q\geq 0$ such that $p+q=2$, we have 
$$
\sum_{|a|, |b| \leq A} \frac{\tau_k (|a^4-b^4|)}{\max\{|a|,|b|\}^p 
\min\{|a|,|b|\}^q} \ll_{k} (\log A)^{3k-2}.
$$
\end{lemma}

\begin{proof}
For the moment let $F\in\ZZ[x_1,x_2]$ be a binary form 
of degree~$d$  with $\disc (F)\neq 0$ and $F(1,0)F(0,1)\neq 0$.
Let  $\|F\|$ denote the maximum modulus of its
coefficients and put
$$
\rho^*_F(m):=\frac{1}{ \varphi(m)}\#\Big\{ (n_1,n_2)\in (0,m]^2\,:\,
\begin{array}{ll} \gcd(n_1,n_2,m)=1 \\
F(n_1,n_2)\equiv 0
\bmod{m}\end{array}\Big\},
$$
for any  $m\in\NN$.
Then it follows from \cite[Corollary~1]{nair} 
that 
$$
\sum_{|a| \leq A, ~ |b| \leq B}  \tau_k (|F(a,b)|) \ll_{d,k}
\|F\|^\ve\Big(
    ABE +\max\{A,B\}^{1+\ve}\Big),
$$
where
$$
E:=
\prod_{d<p\leq
\min\{A,B\}}\Big(1+\frac{\rho_F^*(p)(k-1)}{p}\Big).
$$
Clearly $\max\{A,B\}^{1+\ve}\ll AB$ under the hypotheses of the
lemma. 

When $F(x_1,x_2)=x_1^4-x_2^4$ one 
sees that 
$$
\rho_F^*(p)=
\#\{ x \bmod{p}:
x^4\equiv 1 \bmod{p}\}=
3+\chi(p),
$$
for $p>2$, where
$\chi$ is the real non-principal character modulo $4$.
It therefore follows from
this result and Merten's theorem that 
\begin{align*}
\sum_{a\leq A,b\leq B} \tau_k(|a^4-b^4|)
&\ll
AB
\prod_{4<p\leq
\min\{A,B\}}\Big(1+\frac{(3+\chi(p))(k-1)}{p}\Big)\\
&\ll AB(\log AB)^{3k-3},
\end{align*}
which establishes the first part of the lemma. 
The second part is an easy consequence of the first part, on 
breaking $\max\{|a|,|b|\}$ and $\min\{|a|,|b|\}$ into dyadic intervals.
\end{proof}

We will also require versions of these results in which the
$\min\{|a|,|b|\}$ appearing in the denominator of the second estimate
in Lemma \ref{lem:nair} is
replaced by $|a-b|$. This is achieved in the following result

\begin{lemma}\label{lem:nair'}
Let $\ve>0$ and $x\geq 2, y> 0$.
Let $p> q\geq 0$ such that $p+q=2$. Then we have the estimates
\begin{align}
\label{eq:anna-swim}
\sum_{0<\max\{a,x/y\}<b\leq x}\frac{\nu_2(a^2-b^2)\tau(|a^4-b^4|)}{b^{p}|a-b|^{q}}
&\ll (\log x)^{3}\log (y+2),\\
\label{eq:anna-swim'}
\sum_{0< y\min\{a,b-a\} <b\leq
  x}\frac{\tau(|a^4-b^4|)}{\max\{a,b\}^{p}|a-b|^{q}} 
&\ll_k \frac{(\log x)^{4}}{y^{1-q}}+1.
\end{align}
\end{lemma}

\begin{proof}
In particular it is clear from the conditions of summation that 
$b>0$ and $|a-b|>0$ for all $a,b$ under consideration in the two sums.
Beginning with the
estimation of the sum in \eqref{eq:anna-swim}, which we
denote by $S(x,y)$, it will be convenient to set
$u=b+a$ and $v=b-a$. In particular  $b^4-a^4=uv(u^2+v^2)/2$. We deduce
that
\begin{equation}\label{eq:arch}
\begin{split}
S(x,y)
&\ll
\sum_{
\substack{
x/y<u\leq 2x\\ 0<v<u}}\frac{\nu_2(uv)\tau(uv(u^2+v^2))}{u^{p}v^{q}}\\
&\ll
\sum_{
\substack{
x/y<u\leq 2x\\ 0<v<u}}\frac{\tau'(uv(u^2+v^2))}{u^{p}v^{q}}\\
&\ll
\sum_{
x/y<u\leq 2x}
\frac{\tau'(u)}{u^{p}}\sum_{0<v< u}
\frac{\tau'(v)\tau'(u^2+v^2)}{v^{q}}.
\end{split}
\end{equation}
with $\tau'$ defined as in the statement of Lemma~\ref{lem:sky}.  
We may assume that $y<2x$, say, since it is only $u>1/2$ that
contribute to the sum. 
Taking $\delta=q$ and $\kappa=1$ in 
Lemma~\ref{lem:sky} we deduce that
\begin{align*}
S(x,y)
&\ll(\log x)^2
\sum_{
x/y<u\leq 2x}
\frac{\tau'(u)}{u}  \phidd(u)^c =(\log x)^2 S_1(x,y), 
\end{align*}
say. We claim that 
$S_1(x,y)\ll (\log x)(\log (y+2))$,
which will clearly suffice to complete 
the proof of \eqref{eq:anna-swim}.
To establish the claim we deduce from an application of the
Selberg--Delange method 
(see  \cite[\S II.5]{ten}, for example)
that  there is a constant $c_0$ such that 
$$
  \Sigma(x):=\sum_{u\leq x} \frac{\tau'(u)}{u}  \phidd(u)^c= c_0(\log x)^2+O(\log x).
$$
But then  $S_1(x,y)=\Sigma(2x)-\Sigma(x/y)\ll (\log x)(\log (y+2))$, as required.

Turning to the sum in \eqref{eq:anna-swim'}, which we
denote by $S'(x,y)$,  it is clear that
$$
S'(x,y)\leq
\sum_{0<ya <b\leq x}
\hspace{-0.1cm}
\frac{\tau(|a^4-b^4|)}{b^{p}a^{q}}
+\hspace{-0.1cm}
\sum_{\substack{0< y(b-a) <b\leq x\\
b\leq 2a}}
\frac{\tau(|a^4-b^4|)}{a^{p}(b-a)^{q}}=S_1'(x,y)+S_2'(x,y),
$$
say. 
For the first sum we consider the contribution
from $a,b$ such that $A\leq b<2A$ and $A'\leq a<2A'$ for 
$A\leq x$ and $A'\leq 2A/y$. Such $a,b$ contribute
$
\ll A^{1-p}{A'}^{1-q}(\log x)^3
$
to $S_1'(x,y)$ by Lemma \ref{lem:nair}.
Summing over the dyadic intervals for $A,A'$ we conclude that 
$S_1'(x,y)\ll (\log x)^4/y^{1-q}+1$. 
Finally, to estimate $S_2'(x,y)$
we set $u=b+a$ and $v=b-a$, so that 
$$
S_2'(x,y)\ll \sum_{0< yv <u\leq 2x}
\frac{\tau(uv(u^2+v^2))}{u^{p}v^{q}}
\ll
\sum_{
0<u\leq 2x}
\frac{\tau(u)}{u^{p}}\sum_{0<v< u/y}
\frac{\tau(v)\tau(u^2+v^2)}{v^{q}}.
$$
Tracing through the argument used to handle the virtually 
identical sum arising in our estimate for $S(x,y)$ in \eqref{eq:arch},
we deduce that 
$$
S_2'(x,y)
\ll
\frac{(\log x)^2}{y^{1-q}}
\sum_{0<u\leq 2x}
\frac{\phidd(u)^c\tau(u)}{u}+1\ll 
\frac{(\log x)^4}{y^{1-q}}+1,
$$
as required for  \eqref{eq:anna-swim'}.
\end{proof}

Taking $(p,q)=(2,0)$ in the second part of Lemma \ref{lem:nair}
the sum in question can be interpreted as
$$
\sum_{\substack{
x=[a, b]\in \Pone(\QQ)\\ H_1(x)\leq A}} 
\frac{\tau_k (|a^4-b^4|)}{H_1(x)^2},
$$
where $H_n$ is the exponential height on $\PP_\QQ^n(\QQ)$. In \S \ref{s:2}
sums of the shape 
$$
\sum_{\substack{
x=[a, b,m]\in C(\QQ)\\ H_2(x)\leq A}} 
\frac{\tau_k (|a^4-b^4|)}{H_2(x)}
$$
will arise, for suitable non-singular conics $C\subset \Ptwo$ such that
$C(\QQ) \neq \emptyset$. In fact we will only need to consider the
case in which $k=3$.

\begin{lemma}\label{lem:nair''}
Let $\ve>0$ and $A\geq 2$. Then for any $i,j\in \{0,1\}$
we have 
$$
\sum_{\substack{1\leq a, b,m\leq A\\
a^2+(-1)^i b^2=2^j m^2}}  \frac{\tau_3 (|a^4-b^4|)}{\max\{a,b\}} \ll  (\log
A)^{26}.
$$
\end{lemma}

\begin{proof}
Let $\Sigma_{i,j}(A)$ denote the sum that is to be estimated. We
will achieve the desired bounds by parametrising the solutions to the
underlying equations. 
Suppose first that $j=1$. 
Solutions of $a^2-b^2=2m^2$ are parametrised via
$$
a=u^2+2v^2, \quad b=\pm(u^2-2v^2), \quad
m=2uv,
$$
for coprime $u,v\in \NN$. Likewise, solutions of
$a^2+b^2=2m^2$ are parametrised via
$$
a=u^2+2uv-v^2, \quad b=\pm(-u^2+2uv+v^2), \quad
m=u^2+v^2,
$$
for coprime $u,v\in \NN$. 
It follows that
$$
\Sigma_{i,1} (A)\ll 
\sum_{u,v\leq \sqrt{A}}\frac{\tau_3 (|F_i(u,v)|)}{\max\{u,v\}^2},
$$
where $F_0(u,v)=u^2v^2(u^4+4v^4)$ and 
$F_1(u,v)=uv(u^2-v^2)(u^2+v^2)^2$.
We will show that  
$$
\widetilde{\Sigma}_i (T):=
\sum_{u,v\leq T} \tau_3 (|F_i(u,v)|) \ll
T^2(\log T)^{25},
$$
which once combined with a dyadic summation will suffice to 
establish the upper bound for 
$\Sigma_{i,1} (A)$ in the statement of the lemma. 
To estimate $\widetilde{\Sigma}_i (T)$ we shall
apply 
\cite[Corollary~1]{nair}, exactly 
as in our proof of Lemma~\ref{lem:nair}. To
deal with the fact that our binary forms have repeated factors we
invoke the inequalities
$
\tau_3(\ell m^2)\leq \tau_3(\ell^2 m^2)\leq
\tau_6(\ell m).
$
But then it easily follows that 
$$
\widetilde{\Sigma}_i (T)\ll 
\sum_{u,v\leq T} \tau_6 (|G_i(u,v)|) \ll T^2
(\log T)^{25},
$$
as required, where $G_0(u,v)=uv(u^4+4v^4)$ and 
$G_1(u,v)=uv(u^2-v^2)(u^2+v^2)$.

For the case $j=0$ the argument is exactly the same, but this time
built on the more familiar parameterisation of the Pythagorean
triples. The outcome is the bound for 
$\Sigma_{i,0}(A)$ recorded in the statement of the lemma. 
\end{proof}

\subsection{The divisor problem for binary forms}

Throughout this section we let $i$ denote a generic
element from the set $\{1,2\}$.
Let $L_i,Q \in \ZZ[x_1,x_2]$ be binary forms, with $\deg L_i=1$
and $\deg Q=2$, such that $L_1,L_2$ are non-proportional and~$Q$ is
irreducible over $\QQ$.  Let $\B\subseteq [-1,1]^2$ be a convex region
whose boundary is defined by a piecewise continuously differentiable
function. Assume that $L_i(\x)>0$ and~$Q(\x)>0$ for every $\x \in \B$.
The workhorse in this paper is an asymptotic formula for sums akin to
$$
\sum_{\substack{\x\in \ZZ^2\cap X\B\\\gcd(x_1,x_2)=1}}
\frac{\tau(L_1(\x) L_2(\x) Q(\x))}{\max\{|x_1|,|x_2|\}^2},
$$
where $\tau$ is the divisor function and $X\B:=\{X\x: \x\in\B\}$.

In fact the arguments that appear in our work call for a rather more general
type of sum.  Suppose that $g=h*\tau$ is the Dirichlet convolution of
the divisor function with a multiplicative  arithmetic function
$h$ that satisfies
\begin{equation}
   \label{eq:1.22}
   \sum_{d\in \NN} \frac{|h(d)|}{d^{1/4}}\ll 1.
\end{equation}
Let $V\subseteq [0,1]^4$ be a region cut out by a finite number of
hyperplanes with absolutely bounded coefficients.
For any $Y\geq 2$ we define
$$
   g(L_1(\x),L_2(\x),Q(\x);Y;V)
:=
\sum_{\substack{
d\mid L_1(\x)L_2(\x)Q(\x)\\
d_i=\gcd(d,L_i(\x)), ~
d_3=\gcd(d,Q(\x))\\
(\frac{\log d_1}{\log Y},\frac{\log d_2}{\log Y},\frac{\log
   d_3}{2\log Y} , \frac{\log\max \{|x_1|,|x_2|\}}{\log Y})\in V}}
(1*h)(d).
$$
Then we will encounter sums of the shape
$$
   S_g(X,Y;V) :=\sum_{\substack{\x\in \ZZ^2\cap X\B\\\gcd(x_1,x_2)=1}}
\frac{g(L_1(\x),L_2(\x),Q(\x);Y;V)}{\max\{|x_1|,|x_2|\}^2}.
$$
If one takes $V=[0,1]^4$ and  $Y$ a multiple
of $X$ then $g(L_1,L_2,Q;Y;V)=g(L_1L_2Q)$. Moreover, if one
takes 
$$
h(n)=\begin{cases}
1, & \mbox{if $n=1$,}\\
0, & \mbox{otherwise},
\end{cases}
$$
then one arrives at exactly the sum
involving $\tau$  that was mentioned at the start of this subsection.

For any prime $p$ and $\nu_1,\nu_2,\nu_3\geq 0$, let
\begin{equation}
   \label{eq:rho_dag}
   \rho_p^\dagger({\nu_1},{\nu_2},{\nu_3}):=
\#\left\{\x \in (\ZZ/p^{\nu_1+\nu_2+\nu_3+1}\ZZ)^2:
\begin{array}{l}
p\nmid \x, \\
p^{\nu_i}\| L_i(\x),\\
p^{\nu_3}\| Q(\x)
\end{array}
\right\}
\end{equation}
and
\begin{equation}
   \label{eq:rho_dag-}
\overline{\rho}_p^\dagger(\nu_1,\nu_2,\nu_3)
:=p^{-{2(\nu_1+\nu_2+\nu_3+1)}}\rho_p^\dagger(\nu_1,\nu_2,\nu_3).
\end{equation}
Here, we follow the convention that $p^\nu \| n$ if and only if $\nu_p(n)=\nu$.
The following asymptotic formula is established in our
companion paper \cite[Corollaire~3]{L1L2Q}.

\begin{lemma}\label{lem:uniform}
Let $\ve>0$. Assume that $2\leq X\leq Y\leq X^{1/\ve}$. Then we have
$$
S_g(X,Y;V)=4C^* \vol(\B)\vol(V_0)(\log Y)^4
+O_{L_i,Q}\big((\log X^2/Y)(\log Y)^3+(\log X)^{3+\ve}\big),
$$
where
\begin{equation}\label{eq:C*}
C^*:=\prod_p \Big(1-\frac{1}{p}\Big)^{3}\sum_{\bnu \in \ZZ_{\geq 0}^3}
g(p^{\nu_1+\nu_2+\nu_3})
\overline{\rho}_p^\dagger({\nu_1},{\nu_2},{\nu_3})
\end{equation}
and
\begin{equation}
   \label{eq:V0}
V_0:=
V\cap \Big\{\mathbf{v}\in [0,1]^4: 
\max\{v_1,v_2,v_3\}\leq v_4\leq \frac{1}{2}\Big\}.
\end{equation}
\end{lemma}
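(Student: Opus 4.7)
The plan is to unpack the definition of $g(L_1(\x),L_2(\x),Q(\x);Y;V)$ and then exchange orders of summation. Writing $g = h*\tau$ with $\tau = 1*1$, the weight $(1*h)(d)$ distributes naturally across a choice of divisors $(d_1, d_2, d_3)$ of $L_1(\x), L_2(\x), Q(\x)$ respectively. After Möbius inversion to remove the coprimality condition $\gcd(x_1, x_2) = 1$, the sum $S_g(X, Y; V)$ reduces to a weighted sum, over triples $(d_1, d_2, d_3)$ whose logarithmic coordinates lie in the appropriate slice of $V$, of
$$\sum_{\substack{\x \in \ZZ^2 \cap X\B\\ \x \in \sfl(d_1,d_2,d_3)}} \frac{1}{\max\{|x_1|,|x_2|\}^2},$$
where $\sfl(d_1,d_2,d_3) \subseteq \ZZ^2$ is the sublattice cut out by $L_i(\x) \equiv 0 \pmod{d_i}$ ($i = 1, 2$) and $Q(\x) \equiv 0 \pmod{d_3}$.

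The next step evaluates this inner sum. The crude estimate \eqref{eq:classic} being inadequate, one applies Poisson summation on $\ZZ^2/\sfl$, extracting the main term proportional to $\vol(\B)X^2/\det\sfl$ and expressing the error via the Fourier transform of a smoothed indicator of $X\B$. The local densities $\overline{\rho}_p^\dagger(\nu_1,\nu_2,\nu_3)$ defined in \eqref{eq:rho_dag-} arise as the $p$-part of $1/\det\sfl$ once one restricts to congruence classes with exact valuations $p^{\nu_i}\| L_i(\x)$ and $p^{\nu_3}\| Q(\x)$. Patching across primes via the Chinese remainder theorem and combining with the Dirichlet series of $h$ produces the Euler factor in \eqref{eq:C*}.

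The final step sums over $(d_1, d_2, d_3)$ with weight $(1*h)(d)$ subject to the logarithmic constraint $V$. Passing to logarithmic variables $v_1, v_2, v_3$ associated to $d_1, d_2, d_3$ (with the factor $1/2$ on $v_3$ reflecting $\deg Q = 2$) and $v_4$ associated to $\max\{|x_1|,|x_2|\}$, and approximating the sum by a Riemann integral over $V_0$ as in \eqref{eq:V0}, one arrives at the claimed main term
$$4 C^* \vol(\B) \vol(V_0) (\log Y)^4.$$
Convergence of the Euler product and of the $h$-sum is secured by the hypothesis \eqref{eq:1.22}.

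The principal obstacle is the uniform error term. The contribution $(\log X^2/Y)(\log Y)^3$ reflects the loss in the Riemann-sum approximation when $Y$ appreciably exceeds $X$, while $(\log X)^{3+\ve}$ absorbs lattice points near $\partial(X\B)$ and the degenerate ranges in which one of the $d_i$ nearly equals $L_i(\x)$ or $Q(\x)$. Here one invokes the Nair-type estimate of Lemma \ref{lem:nair} applied to $\tau_k(|L_1(\x)L_2(\x)Q(\x)|)$ to control such tail ranges, coupled with careful management of the Poisson error. As the full execution is intricate and occupies a dedicated companion paper, we defer the details to \cite[Corollary~3]{L1L2Q}.
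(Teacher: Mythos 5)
The paper does not prove this lemma internally: it is quoted directly from the companion paper as \cite[Corollary~3]{L1L2Q}, and since your argument ultimately rests on exactly that citation, you are taking essentially the same route as the paper. Your preliminary sketch (M\"obius inversion, lattice counting, local densities $\overline{\rho}_p^\dagger$, and the passage to $\vol(V_0)$ in logarithmic variables) is a plausible heuristic for what the cited result must contain, but the genuinely delicate content --- the uniformity in the range $2\leq X\leq Y\leq X^{1/\ve}$ and the precise error term --- is neither carried out by you nor by the present paper, so nothing is lost or gained relative to the paper's own treatment.
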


We will ultimately apply Lemma \ref{lem:uniform} with $L_1L_2Q(a,b)$
equal to the discriminant $\D(a,b)$ of the conic \eqref{eq:conic}. 
The exponent of $\log Y$ in the lemma reflects the fact that we are
dealing with binary forms with three irreducible factors. The
attentive reader will observe a correlation with our description of the rank of
$\Pic X$ in \S \ref{s:intro}.

We will be interested in applications of Lemma \ref{lem:uniform} when
$g:\NN \rightarrow
\RR$ is the multiplicative arithmetic function defined  via
\begin{equation}
   \label{eq:g}
   g(p^\nu):=
\begin{cases}
\max\{1,\nu-1\}, & \mbox{if $p=2$,}\\
1+\nu(1-1/p)/(1+1/p), & \mbox{if $p>2$.}
\end{cases}
\end{equation}
It easily follows that $g=h*\tau$, with
\begin{equation}\label{eq:h}
\begin{split}
h(p^\nu)=(g*\mu*\mu)(p^\nu)
&=
\begin{cases}
0, & \mbox{if $p>2$ and $\nu\geq 2$,}\\
-2/(p+1), & \mbox{if $p>2$ and $\nu=1$,}\\
0, & \mbox{if $p=2$ and $\nu\geq 4$,}\\
1, & \mbox{if $p=2$ and $\nu=3$,}\\
0, & \mbox{if $p=2$ and $\nu=2$,}\\
-1, & \mbox{if $p=2$ and $\nu=1$.}
\end{cases}
\end{split}
\end{equation}
In particular $|h(n)|\ll n^{-1+\ve}$ for any
$\ve>0$, whence \eqref{eq:1.22} holds.

\section{Preliminary manipulations}\label{s:prelim}

Recall the definition of the quadratic forms
$\Phi_1,\Phi_2$ from \eqref{eq:bundle} and \eqref{eq:phi2}.
We begin this section by relating $N_{U,H}(B)$ to the quantity
\begin{equation}
   \label{eq:def-N1}
N_1(B):=\#\left\{\x\in \NN^5:\begin{array}{l}
\hcf(x_0,\ldots,x_3)=1,~
\max\{x_0,\ldots,x_3\}\leq B,\\
\Phi_1(\x)=\Phi_2(\x)=0,~\{x_0,x_1\}\neq \{x_2,x_3\}
\end{array}
\right\},
\end{equation}
in which the main difference is that the count is restricted
to positive integer solutions.  This is achieved in the following result.

\begin{lemma}\label{lem:red-1}
   We have $N_{U,H}(B) =8N_1(B) +O(B).$
\end{lemma}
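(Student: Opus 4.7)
The plan is to identify $N_{U,H}(B)$ with a sign-normalised count of primitive integer vectors on $X$ and then quotient by the natural symmetries of the pair $(\Phi_1,\Phi_2)$. Each point $x = [\x] \in U(\QQ)$ has a unique primitive representative $\x \in \ZZ^5$ up to $\x \mapsto -\x$, and the norm property recorded just after \eqref{eq:norm} gives $H(x) = \max_{0 \le i \le 3}|x_i|$. Writing $A$ for the set of $\x \in \ZZ^5$ with $\gcd(x_0,\ldots,x_4) = 1$, $[\x] \in U$ and $\max_{0 \le i \le 3}|x_i| \le B$, I then have $2N_{U,H}(B) = |A|$, and the lemma reduces to showing $|A| = 16 N_1(B) + O(B)$.

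Focusing first on the generic set $A_0 \subset A$ consisting of those $\x$ with every coordinate non-zero, I introduce the absolute-value map $\x \mapsto |\x| := (|x_0|,\ldots,|x_4|) \in \NN^5$. Since $x_0x_1 = x_2x_3$, the signs $\epsilon_i := \sign x_i$ satisfy $\epsilon_0\epsilon_1\epsilon_2\epsilon_3 = 1$; together with the free sign of $x_4$ this gives exactly $16$ sign vectors, each of which preserves $\Phi_1 = \Phi_2 = 0$, so every fibre of the map has size $16$. The image $\y = |\x|$ sits in $X \cap \NN^5$, and using $\Phi_2 = 0$ (treating $p = 2$ and $p$ odd separately) a short argument shows that $\gcd(\y) = 1$ is equivalent to $\gcd(y_0,\ldots,y_3) = 1$ on $X$.

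The key geometric step is to verify that the image coincides exactly with the set counted by $N_1(B)$. Imposing $\{|x_0|,|x_1|\} = \{|x_2|,|x_3|\}$ together with $\Phi_1 = \Phi_2 = 0$ forces $(x_0,x_1) \in \{\pm(x_2,x_3),\pm(x_3,x_2)\}$ and then $x_4 = \pm x_0$ or $x_4 = \pm x_1$, carving out a union of eight lines contained in the $16$ lines that form $X \setminus U$; hence every $\x \in A$ already satisfies $\{|x_0|,|x_1|\} \ne \{|x_2|,|x_3|\}$. Conversely, enumerating the $\QQ$-rational lines on $X$ through the singular fibres of the two conic bundles $f_1,f_2$ at $[1:\pm 1]$ shows that the only lines admitting representatives with all coordinates strictly positive are the two fibre components at $[1:1]$, namely $(t,s,t,s,t)$ and $(t,s,s,t,s)$, and these are precisely the lines killed by the condition $\{y_0,y_1\} \ne \{y_2,y_3\}$. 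Thus every $\y$ counted by $N_1(B)$ satisfies $[\y] \in U$, and the resulting $16$-to-$1$ correspondence yields $|A_0| = 16 N_1(B)$.

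The remaining contribution $|A \setminus A_0|$ counts primitive solutions with at least one zero coordinate. If $x_i = 0$ for some $i \in \{0,1,2,3\}$ then $\Phi_1 = 0$ forces a second such coordinate to vanish, reducing the count to primitive integer points on one of four non-singular plane conics cut out of $X$; Lemma~\ref{lem:bhb} gives $O(B)$ for each. If instead only $x_4 = 0$ then $\x$ parameterises the curve $C := X \cap \{x_4 = 0\} \subset \Pthree$, a complete intersection of two quadrics which (a direct check shows) contains none of the $16$ lines of $X$; depending on whether $C$ is irreducible of arithmetic genus one (contributing $O(B^\ve)$) or reducible into conics (each contributing $O(B)$ via Lemma~\ref{lem:bhb}), one finds $O(B)$ points in all cases. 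Combining these bounds yields $|A \setminus A_0| = O(B)$ and hence the lemma. The main obstacle is the line-enumeration step: to make the correspondence exact one must show that the condition $\{y_0,y_1\} \ne \{y_2,y_3\}$ removes precisely those $\QQ$-rational lines on $X$ admitting strictly positive integer parameterisations.
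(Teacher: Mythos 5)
Your proposal is correct and follows essentially the same route as the paper: halve for $\pm\x$ using the norm identity $\|\x\|=\max\{|x_0|,\ldots,|x_3|\}$ on $X$, discard the vanishing-coordinate locus with an $O(B)$ bound (conics via Lemma~\ref{lem:bhb}, plus the hyperplane section $x_4=0$), exploit the $16$ admissible sign patterns with $\epsilon_0\epsilon_1=\epsilon_2\epsilon_3$, and identify the condition $x\in U$ with $\{|x_0|,|x_1|\}\neq\{|x_2|,|x_3|\}$ through the explicit lines. The only differences are cosmetic: you verify the lines/absolute-value equivalence and the $x_4=0$ contribution directly, where the paper respectively cites the line computation of \S\ref{s:local} and asserts the bound for the system $xy=zt$, $x^2+y^2+z^2=t^2$ without detail.
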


\begin{proof}
It follows from the calculation of the lines in \S \ref{s:local} that
the condition~$x\in U$ is equivalent to $\{|x_0|,|x_1|\}\neq \{|x_2|,|x_3|\}$,
for any $x=[\x]\in X$.
We therefore deduce that
$$
N_{U,H}(B)=\frac{1}{2}\#\left\{\x\in \ZZ^5: \begin{array}{l}
\hcf(x_0,\ldots,x_4)=1, ~\|\x\|\leq B,\\
\Phi_1(\x)=\Phi_2(\x)=0, ~\{|x_0|,|x_1|\}\neq \{|x_2|,|x_3|\}
\end{array}
\right\}.
$$
It follows  from the equation $\Phi_2(\x)=0$ that the
condition $\hcf(x_0,\ldots,x_4)=1$ is equivalent to
$\hcf(x_0,x_1,x_2, x_3)=1$ for any vector $\x$ in which we are
interested.
Furthermore, for $\x$ such that $\Phi_1(\x)=\Phi_2(\x)=0$, it is clear
from \eqref{eq:norm} that
\begin{align*}
\|\x\|&=\max\left\{|x_0|,|x_1|,
|x_2|,|x_3|,\frac{1}{\sqrt{3}}\sqrt{x_0^2+x_1^2+x_2^2-x_3^2}\right\}
= \max\{|x_0|,\ldots,|x_3|\}.
\end{align*}

We proceed to consider the contribution from the vectors $\x \in
\ZZ^5$ for which $\|\x\|\leq~B$ and
$$
\Phi_1(\x)=\Phi_2(\x)=0, \quad x_0x_1x_2x_3x_4=0.
$$
Let us begin with the case $x_i=0$,
for $i\in \{0,1,2,3\}$. This hyperplane section leads us to estimate the
number of solutions to an equation of the form $x^2\pm y^2\pm 2z^2=0$
with $\hcf(x,y,z)=1$ and $\max\{|x|,|y|,|z|\}\leq B$. An application
of Lemma \ref{lem:bhb} therefore yields a contribution of $O(B)$
from this case.  Turning to the contribution from the case $x_4=0$ and
$x_0x_1x_2x_3\neq 0$, our task is to estimate the
number of primitive vectors $(x,y,z,t)\in (\ZZnz \cap [-B,B])^4$ such
that $xy=zt$ and $x^2+y^2+z^2=t^2$.
Eliminating $t$ produces an absolutely
irreducible  quartic curve $x^2y^2=z^2(x^2+y^2+z^2)$, which an
application of \cite[Theorem 3]{annal}
reveals to contribute $O(B^{1/2+\ve})$ points overall.
Bringing everything together we have therefore shown that
$$
N_{U,H}(B)=\frac{1}{2}\#\left\{\x\in \ZZnz^5: \begin{array}{l}
\hcf(x_0,\ldots,x_3)=1,\\
\max\{|x_0|,\ldots,|x_3|\}\leq B,\\
\Phi_1(\x)=\Phi_2(\x)=0, \\
\{|x_0|,|x_1|\}\neq \{|x_2|,|x_3|\}
\end{array}
\right\}+O(B).
$$

It remains to show that we can restrict attention to positive
values of $x_0,\ldots,x_4$. But this follows on noting that $\Phi_2$
is invariant under sign changes in the components of $\x$, and
$\Phi_1$ demands that $x_0x_1$ should share the same sign as $x_2x_3$.
Thus the above cardinality is $16N_1(B)$, with $N_1(B)$ given by
\eqref{eq:def-N1}, and the lemma follows.
\end{proof}

The next stage of the argument involves parametrising the solutions to
the equation $\Phi_1(\x)=0$. It is a simple exercise to show that the
set of $x_0,x_1,x_2,x_3\in\NN$ such that $x_0x_1=x_2x_3$ and
$\hcf(x_0,\ldots,x_3)=1$, is in bijection with the set 
of $\y=(y_{02},y_{03},y_{12},y_{13})\in\NN^4$ such that
$$
\hcf(y_{02},y_{13})=\hcf(y_{03},y_{12})=1,
$$
the relation being given by
$$
x_0=y_{02}y_{03}, \quad x_1=y_{12}y_{13}, \quad x_2=y_{02}y_{12},
\quad
x_3=y_{03}y_{13}.
$$
Define
$$
   \Psi(\y):=
\max\{y_{02}y_{03},y_{12}y_{13},y_{02}y_{12},y_{03}y_{13}\}.
$$
On recalling the definition \eqref{eq:def-N1}, it is easy to see that
$$
N_1(B)=\#\left\{(\y,x_4)\in \NN^5:\begin{array}{l}
\hcf(y_{02},y_{13})=\hcf(y_{03},y_{12})=1,\\
\Phi_2(\x)=0, ~\Psi(\y)\leq B,\\
y_{02}\neq y_{13}, ~y_{03}\neq y_{12}.
\end{array}
\right\},
$$
where $\x$ denotes $(y_{02}y_{03}, y_{12}y_{13}, y_{02}y_{12},
y_{03}y_{13}, x_4)$.

Given integers $a,b$, recall the definition \eqref{eq:conic} of the
plane conic $C_{a,b}\subset\Ptwo$.
Writing~$(a,b)=(y_{02},y_{13})$ and $(x,y,z)=(y_{03},y_{12},x_4)$, it
follows that $N_{1}(B)$ is the number of
$(a,b,x,y,z)\in \NN^5$ such that \eqref{eq:conic} holds, with
$$
\hcf(a,b)=\hcf(x,y)=1,
\quad ab, xy\neq 1,\quad
\max\{a,b\}\max\{x,y\}\leq B.
$$
For $a,b\in\NN$ we define the quantities
\begin{align}
   \label{eq:def-M}
\tM_{a,b}(B)
&:=
\#\left\{(x,y,z)\in C_{a,b}\cap \NN^3:
\begin{array}{l}
\hcf(x,y)=1, \\
\max\{a,b\}<\max\{x,y\},\\
\max\{a,b\}\max\{x,y\}\leq B
\end{array}
\right\},\\
   \label{eq:hatM}
\widehat{M}_{a,b}(B)
&:=
\#\left\{(x,y,z)\in C_{a,b}\cap \NN^3:
\begin{array}{l}
\hcf(x,y)=1, xy\neq 1\\
\max\{a,b\}\max\{x,y\}\leq B
\end{array}
\right\},\\
   \label{eq:def-M'}
M_{a,b}(B)&:=
\#\left\{(x,y,z)\in C_{a,b}\cap \NN^3:
\begin{array}{l}
\cp{x}{y},\\
\max\{a,b\}\max\{x,y\}\leq B
\end{array}
\right\},
\end{align}
where $C_{a,b}\subset \Ptwo$ is the conic \eqref{eq:conic}.
There is an abuse of notation here, in that we
are interested in vectors $(x,y,z)\in \NN^3$ that lie in the affine
cone above $C_{a,b}$, rather than points $[x,y,z]\in
C_{a,b}(\QQ)$ in which coordinates are chosen so that $x,y,z>0$.
Note that we must automatically have $xy\neq 1$ in the definition of
$\tM_{a,b}(B)$ since $1\leq \max\{a,b\}<\max\{x,y\}$.
Hence $\tM_{a,b}(B) \leq \widehat{M}_{a,b}(B)\leq M_{a,b}(B).$
We have
$$
N_{1 }(B) =\sum_{\substack{a,b\leq B \\ \gcd(a,b)=1, ab\neq 1}}
\widehat{M}_{a,b}(B)
$$
and we are now ready to establish the following result.

\begin{lemma}\label{lem:red-2}
We have
$$
N_1(B)=2\sum_{\substack{a,b<\sqrt{B}\\\hcf(a,b)=1, ab\neq 1}} \tM_{a,b}(B)
+O\big(B(\log B)^3\big),
$$
where $\tM_{a,b}(B)$ is given by \eqref{eq:def-M}.
\end{lemma}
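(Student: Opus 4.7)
\medskip

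\noindent\emph{Proof plan.} I rewrite $N_1(B)$ explicitly as the cardinality of the set $\mathcal S$ of quintuples $(a,b,x,y,z)\in\NN^5$ satisfying $\gcd(a,b)=\gcd(x,y)=1$, $ab\neq 1$, $xy\neq 1$, $\max\{a,b\}\max\{x,y\}\le B$, and the conic relation $(a^2-b^2)x^2+(a^2+b^2)y^2=2z^2$. I then partition $\mathcal S=\mathcal S_<\sqcup\mathcal S_=\sqcup\mathcal S_>$ according to whether $\max\{a,b\}$ is strictly less than, equal to, or strictly greater than $\max\{x,y\}$. Elements of $\mathcal S_<$ are precisely those counted by the sum in the statement: the condition $\max\{a,b\}<\max\{x,y\}$ combined with the product constraint forces $\max\{a,b\}<\sqrt{B}$, while $\max\{x,y\}\ge\max\{a,b\}+1\ge 2$ makes the requirement $xy\ne 1$ automatic. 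Thus $|\mathcal S_<|=\sum_{a,b<\sqrt B,\,\gcd(a,b)=1,\,ab\neq 1}\tM_{a,b}(B)$.

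To equate $|\mathcal S_<|$ with $|\mathcal S_>|$ I introduce the involution $\sigma:(a,b,x,y,z)\mapsto(y,x,b,a,z)$. Under the identification $(y_{02},y_{03},y_{12},y_{13})=(a,x,y,b)$ used in the derivation of \eqref{eq:def-N1}, this is the simultaneous permutation $y_{02}\leftrightarrow y_{12}$, $y_{03}\leftrightarrow y_{13}$, which swaps $x_0=y_{02}y_{03}$ with $x_1=y_{12}y_{13}$ and fixes $x_2,x_3,x_4$; both $\Phi_1$ and $\Phi_2$ are preserved. Equivalently, a direct expansion shows that both $C_{a,b}(x,y,z)$ and $C_{y,x}(b,a,z)$ evaluate to $a^2(x^2+y^2)-b^2(x^2-y^2)-2z^2$. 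Since all coprimality, positivity, non-triviality and height conditions are symmetric in the pairs $(a,b)$ and $(x,y)$, the map $\sigma$ is an involution on $\mathcal S$ that bijects $\mathcal S_<$ with $\mathcal S_>$. Hence $N_1(B)=2|\mathcal S_<|+|\mathcal S_=|$, and the proof reduces to the estimate $|\mathcal S_=|\ll B(\log B)^3$.

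For the diagonal bound, set $M:=\max\{a,b\}=\max\{x,y\}\le\sqrt{B}$. For each $(a,b)$ with $\gcd(a,b)=1$ and $\max\{a,b\}=M$, the inner count is majorised by $M_{a,b}(M^2)$, and Lemma~\ref{lem:bhb} applies with $B_1=B_2=M$ and $B_3\asymp M^2$ (the latter being forced by $|z|^2\ll M^4$ from the conic equation). The conic has discriminant $\Delta=-2(a^4-b^4)$, and a brief computation using $\gcd(a,b)=1$ gives $\Delta_0\le 4$. This yields
\[
M_{a,b}(M^2)\ll\tau(|a^4-b^4|)\bigl(1+M^4/|a^4-b^4|\bigr)^{1/3}.
\]
I split the resulting sum according to whether $|a^4-b^4|\ge M^4/2$ or not. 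In the first range the parenthesised factor is $O(1)$, and for each $M$ the one-variable Nair-type estimate applied to $M^4-b^4=(M-b)(M+b)(M^2+b^2)$ (and its mirror with $a,b$ swapped) gives $\sum\tau(|a^4-b^4|)\ll M(\log M)^3$, so that the total is $\ll B(\log B)^3$. In the near-diagonal range $|a^4-b^4|<M^4/2$ one must have $|a-b|\le M/2$, and a dyadic decomposition $|a^4-b^4|\asymp 2^j$ with $2^j\in[M^3,M^4]$ yields $O(2^j/M^3)$ pairs per dyadic class, each contributing $\tau(2^j)(M^4/2^j)^{1/3}$; the geometric sum over $j$ collapses to $O(M(\log M)^2)$ per $M$, again controlled by $B(\log B)^3$.

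The main obstacle is this last step: when $a$ and $b$ are close the conic is nearly degenerate, the bound from Lemma~\ref{lem:bhb} degrades, and one must trade this enlarged factor against the scarcity of near-diagonal pairs $(a,b)$. Balancing these two effects via the dyadic decomposition described above is the technical heart of the diagonal estimate, and it is what pins down the exact exponent of $\log B$ in the error term.
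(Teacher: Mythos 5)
Your first half is fine and is in fact the paper's argument: the involution $(a,b,x,y,z)\mapsto(y,x,b,a,z)$, the identity $C_{a,b}(x,y,z)=C_{y,x}(b,a,z)$, and the observation that $xy\neq 1$ and $\max\{a,b\}<\sqrt{B}$ come for free on the off-diagonal part, reduce everything to showing that the diagonal count ($\max\{a,b\}=\max\{x,y\}=M$) is $O(B(\log B)^3)$. For the diagonal you then diverge from the paper, invoking Lemma~\ref{lem:bhb} to get $M_{a,b}(M^2)\ll\tau(|a^4-b^4|)(1+M^4/|a^4-b^4|)^{1/3}$; that step is legitimate (it is the same device as \eqref{majMab}), but the way you sum it contains a genuine gap. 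In the near-diagonal range you fix $M$, put $|a^4-b^4|\asymp 2^j$, and claim each of the $O(2^j/M^3)$ pairs contributes $\tau(2^j)(M^4/2^j)^{1/3}$, the whole thing collapsing to $O(M(\log M)^2)$ per $M$. There is no tool in the paper (and none available) that bounds the divisor function by a power of $\log$ on such sets: at fixed $M$ a dyadic class with $2^j\asymp M^3$ contains only $O(1)$ pairs, e.g.\ $b=M-1$, and $\tau(|M^4-(M-1)^4|)$ can be as large as $\exp(c\log M/\log\log M)$, so it is neither literally $\ll j+1$ nor averageable to $(\log M)^{O(1)}$ over so few terms. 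Falling back on the pointwise bound $\tau(n)\ll n^{\ve}$ gives a per-$M$ total $\ll M^{1+\ve}$ and hence only $O(B^{1+\ve})$ overall, which misses the target $B(\log B)^3$. (A milder version of the same uniformity problem affects your range-(i) claim $\sum_b\tau(M^4-b^4)\ll M(\log M)^3$ for each fixed $M$: Nair-type bounds for the $M$-dependent polynomial carry a factor such as $\phid(\cdot)^c$ which is only bounded on average over $M$; that part is harmless because you can avoid fixing $M$ and bound the whole range-(i) contribution by $\sum_{a,b\le\sqrt{B}}\tau(|a^4-b^4|)\ll B(\log B)^3$ directly from Lemma~\ref{lem:nair}.)

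The repair is to average in two variables rather than one: either decompose dyadically in both $u=a+b$ (or $\max\{a,b\}$) and $v=|a-b|$ and apply the binary-form estimate of Lemma~\ref{lem:nair} on each two-dimensional box, exactly as the paper does to prove \eqref{eq:anna-swim} and \eqref{eq:anna-swim'} in Lemma~\ref{lem:ab-small}, the weights $(M/|a-b|)^{1/3}$ being handled box by box; or follow the paper's own diagonal argument, which fixes $y=\max\{x,y\}=b$, bounds the number of $(x,z)$ by the representation number $R_D(y^2(y^2+a^2))\ll\tau(y^2)\tau(y^2+a^2)$ for binary quadratic forms of discriminant $D=-8(y^2-a^2)$, and then sums $\tau(y^2+a^2)$ over the \emph{long} range $a\le\sqrt{B}$ using Lemma~\ref{lem:sky}, where the divisor average is legitimate. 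Either way the point you must not lose is that every divisor-function average has to run over a range long enough for Lemma~\ref{lem:sky} or Lemma~\ref{lem:nair} to apply; your fixed-$M$, fixed-dyadic-class averaging does not provide that.
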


\begin{proof}
Fundamental to our argument is the observation $(x,y,z)\in C_{a,b}\cap\NN^3$ if
and only if $(b,a,z)\in C_{y,x}\cap \NN^3$. From this it easily
follows that
$$
N_1(B)=2\sum_{\substack{a,b<\sqrt{B}\\\hcf(a,b)=1, ab\neq 1}} \tM_{a,b}(B)
+N_1'(B),
$$
where
$$
N_{1}'(B):=\sum_{\substack{a,b<\sqrt{B}\\\hcf(a,b)=1, ab\neq 1}}
\#\left\{(x,y,z)\in C_{a,b}\cap \NN^3:\begin{array}{l}
\hcf(x,y)=1,\\
\max\{a,b\}=\max\{x,y\}
\end{array}
\right\}.
$$
To establish the lemma it therefore suffices to show that
$
N_{1}'(B)\ll B(\log B)^3.
$
Without loss of generality we may view $N_{1}'(B)$ as being the
overall contribution from vectors such that
$$
\max\{a, b\}=b=y = \max\{x,y\},
$$
the remaining $3$ cases being handled in an identical fashion.
Arguing as above, we find that
$$
N_{1}'(B)
\leq 4
\#\left\{(a,x,y,z)\in \NN^4:
\begin{array}{l}
y^4+x^2a^2+y^2(a^2-x^2)=2z^2,\\
\gcd(y,xa)=1,\\
\max\{x,a\}\leq y\leq \sqrt{B}
\end{array}
\right\}.
$$
The overall contribution to the right hand side from the case $a=y$,
so that $a=y=~1$, is clearly $O(1)$.
We will estimate the remaining contribution by first fixing a choice of $y$ and
$a$, and then summing over all available $x$ and $z$.
First we note that~$y$ is necessarily odd in any $4$-tuple $(a,x,y,z)$.

For fixed $a,y$ the form $x^2(y^2-a^2)+2z^2$ defines a positive definite
binary quadratic form of discriminant $D=-8(y^2-a^2)\neq 0$. In particular
the total number of available $x,z$ is bounded by the  number of
representations $R_D(y^2(y^2+a^2))$ of $y^2(y^2+a^2)$ by a complete
system of inequivalent forms of discriminant $D$.
It easily follows that
$$
N_{1}'(B)
\leq 4
\sum_{\substack{y,a\in \NN\\a<y\leq \sqrt{B}\\2\nmid y, ~\hcf(a,y)=1}}
R_D(y^2(y^2+a^2))+O(1),
$$
with $D=-8(y^2-a^2)$.
By the classical theory of binary quadratic forms (see Iwaniec and
Kowalski \cite[\S 22]{iw-kow}, for example) we have
$$
R_D(y^2(y^2+a^2))=2 \sum_{d\mid y^2(y^2+a^2) } \chi_D(d)\ll
\tau\big(y^2(y^2+a^2)\big) =
\tau(y^2)\tau(y^2+a^2),
$$
where $\chi_D$ is the Kronecker symbol. The last equality follows since
$\cp{a}{y}$.
An application of Lemma \ref{lem:sky} with $\delta=\kappa=0$ 
therefore reveals that there is an
absolute constant $c>0$ such that
\begin{align*}
N_{1}'(B)
\ll \sum_{y\leq \sqrt{B}} \tau(y^2)
\sum_{a\leq \sqrt{B}}\tau(y^2+a^2) +\sqrt{B}
&\ll \sqrt{B}\log B
\sum_{y\leq \sqrt{B}} \phidd(y)^c\tau(y^2),
\end{align*}
where $\phidd$ is given by \eqref{eq:funcpp}.
Finally, on opening up the divisor function, we note that 
\begin{align*}
\sum_{y\leq \sqrt{B}} \phidd(y)^c\tau(y^2)
&\leq 
\sum_{uvz\leq \sqrt{B}} \phidd(uvz)^c,
\end{align*}
since $d\mid y^2$ if and only if $d=uv^2$ with $u$ square-free
 and $uv\mid y$. Noting that $(\sigma+\sigma')^5\leq
 2^5(\sigma^5+\sigma'^5)$ it easily follows from the definition of
 $\phidd$ that $\phidd(mn)\leq \phidd(m)\phidd(n)$ for any $m,n\in
 \NN$. Hence
$$
\sum_{y\leq \sqrt{B}} \phidd(y)^c\tau(y^2)
\leq 
\sum_{uv\leq \sqrt{B}} \phidd(uv)^{2^5c}
\sum_{z\leq \sqrt{B}/(uv)} \phidd(z)^{2^5c}.
$$
Combining \eqref{eq:1} with partial summation 
readily leads to the conclusion that this has order
$\sqrt{B} (\log B)^2$. 
Thus $N_{1}'(B)\ll B(\log B)^3$, as required to complete 
the proof of the lemma.
\end{proof}

\section{Reducing the range of summation}\label{s:ham}

It will greatly facilitate our arguments if we can reduce the
range of summation for $a,b$ in the statement of Lemma \ref{lem:red-2}.
The main aim of this section is to establish the following result.

\begin{lemma}\label{lem:ab-small}
Let $T>0$ and recall the definition \eqref{eq:def-M'} of
$M_{a,b}(B)$. Then we have 
\begin{align*}
\sum_{\substack{\sqrt{B}/T<\max\{a,b\}\leq \sqrt{B}\\
\cp{a}{b}, ab\neq 1}} M_{a,b}(B) &\ll B(\log B)^3\log (T+2),\\
\sum_{\substack{T\min\{a,b,|a-b|\}< \max\{a,b\}\leq \sqrt{B}\\
\cp{a}{b}, ab\neq 1}} M_{a,b}(B) &\ll \frac{B(\log B)^4}{T^{2/3}}+B(\log B)^3.
\end{align*}
\end{lemma}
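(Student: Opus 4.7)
The plan is to bound $M_{a,b}(B)$ by applying Lemma~\ref{lem:bhb} to the conic $C_{a,b}$, and then to sum the resulting divisor–function bound using Lemma~\ref{lem:nair} applied to the binary quartic $F(a,b)=(a-b)(a+b)(a^2+b^2)$. The matrix of the quadratic form defining $C_{a,b}$ is $\mathrm{diag}(a^2-b^2,a^2+b^2,-2)$, whose determinant is $-4(a^4-b^4)$ and whose $2\times 2$ minors share a greatest common divisor $\ll 1$ once $\gcd(a,b)=1$. Writing $M:=\max\{a,b\}$, any $(x,y,z)\in C_{a,b}\cap \NN^3$ counted by $M_{a,b}(B)$ satisfies $x,y\le B/M$, and then the conic equation forces $z\le\sqrt{2}\,B$. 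Hence Lemma~\ref{lem:bhb} with $B_1=B_2=B/M$ and $B_3=\sqrt{2}\,B$ yields
$$
M_{a,b}(B)\ll \tau(|a^4-b^4|)\Bigl(1+\tfrac{B^3}{M^2|a^4-b^4|}\Bigr)^{1/3}\ll\tau(|a^4-b^4|)+\frac{B\,\tau(|a^4-b^4|)}{M^{2/3}|a^4-b^4|^{1/3}}.
$$

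I would then carry out a dyadic decomposition $M\asymp M_0$ and, inside each $M_0$-shell, a second dyadic decomposition by $\delta_0:=\min\{a,b,|a-b|\}$. The factorisation $|a^4-b^4|=|a-b|(a+b)(a^2+b^2)$ reads off the size of $|a^4-b^4|$ from $\delta_0$: when $|a-b|\asymp\delta_0$ and $a,b\asymp M_0$ one has $|a^4-b^4|\asymp\delta_0 M_0^3$, while when $\min\{a,b\}\asymp\delta_0$ one has $|a^4-b^4|\asymp M_0^4$. In both regimes the number of coprime pairs $(a,b)$ in such a dyadic box is $\ll \delta_0 M_0$, and Lemma~\ref{lem:nair} applied to $F$ (three irreducible factors, with $F(1,0)F(0,1)\ne 0$) gives an average divisor value $(\log B)^3$, so the divisor sum over the box is $\ll \delta_0 M_0(\log B)^3$. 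The first summand in the Lemma~\ref{lem:bhb} bound thus contributes $\ll M_0^2(\log B)^3\ll B(\log B)^3$ per $M_0$-scale; for the first estimate this is multiplied by the $O(\log(T+2))$ available scales to give the asserted contribution, and for the second estimate an unrestricted application of Lemma~\ref{lem:nair} already provides the additive $B(\log B)^3$ term.

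The second summand in the Lemma~\ref{lem:bhb} bound is the delicate one. Substituting the size of $|a^4-b^4|$ in the ``close pair'' regime gives a contribution $\ll B\delta_0^{2/3}(\log B)^3/M_0^{2/3}$ per dyadic pair $(M_0,\delta_0)$; in the ``small coordinate'' regime the analogous contribution is $\ll B\delta_0(\log B)^3/M_0$ and is dominated by the former. Geometric summation in $\delta_0$ is controlled by the top of its range: for the first estimate only $\delta_0\le M_0$ is imposed, producing $B(\log B)^3$ per $M_0$-scale and, after summing the $O(\log T)$ relevant scales, $B(\log B)^3\log(T+2)$; for the second estimate the hypothesis forces $\delta_0<M_0/T$, producing $B(\log B)^3/T^{2/3}$ per $M_0$-scale and hence $B(\log B)^4/T^{2/3}$ after summing the $O(\log B)$ available scales. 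The main technical obstacle will be justifying the $(\log B)^3$ average of $\tau(|a^4-b^4|)$ when restricted to $|a-b|\asymp\delta_0$; this is handled by substituting $c=a-b$ and applying Nair's inequality to the one-variable cubic $a\mapsto c(2a-c)(2a^2-2ac+c^2)$ (three irreducible factors), which introduces only a harmless $\phid(c)^{O(1)}$ factor that vanishes on averaging in $c$.
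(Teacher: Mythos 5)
Your proposal is correct and follows essentially the same route as the paper: you derive the bound $M_{a,b}(B)\ll\tau(|a^4-b^4|)\bigl(1+B\max\{a,b\}^{-5/3}|a-b|^{-1/3}\bigr)$ from Lemma \ref{lem:bhb} exactly as in \eqref{majMab}, dispose of the first term by Lemma \ref{lem:nair}, and control the second term by exploiting the factorisation $a^4-b^4=(a-b)(a+b)(a^2+b^2)$ together with Nair-type divisor averages near and away from the diagonal. The only differences are organisational: you use a double dyadic decomposition in $\max\{a,b\}$ and $\min\{a,b,|a-b|\}$ where the paper proves the two explicit estimates \eqref{eq:anna-swim} and \eqref{eq:anna-swim'} via the substitution $(u,v)=(a+b,a-b)$ and partial summation, and your near-diagonal average (fixing $c=a-b$ and summing the cubic in $a$) needs a routine variant of Lemma \ref{lem:sky} supplied by \cite{nair} — note that per fixed $c$ this cubic only yields $(\log B)^2$ times a factor $\tau(c)\varphi^\dagger(c)^{O(1)}$, and the third logarithm comes from averaging $\tau(c)$ over $c$, which still gives your claimed $\delta_0 M_0(\log B)^3$ per box.
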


Applying the second estimate in Lemma \ref{lem:ab-small} with $T=1/2$ and
inserting this into Lemmas \ref{lem:red-1}
and \ref{lem:red-2}, we easily conclude that
$$
N_{U,H}(B)\ll B(\log B)^4.
$$
Achieving a corresponding lower bound for $N_{U,H}(B)$ is 
straightforward. We will have to work much harder to deduce an asymptotic
formula.

\begin{proof}[Proof of Lemma \ref{lem:ab-small}]
For the conic $C_{a,b}$ defined in \eqref{eq:conic} we have
$\Delta=-2(a^4-b^4)$ and $\Delta_0\ll 1$, since $\cp{a}{b}$ by
assumption. Taking 
$B_1=B_2=B/\max\{a,b\}$ and $B_3=\sqrt{2}B$, 
it therefore follows from Lemma \ref{lem:bhb} that 
$$
M_{a,b}(B)
\ll  \tau(|a^4-b^4|)\Big(1+
\frac{B}{\max\{a,b\}^{2/3}|a^4-b^4|^{1/3}}\Big). 
$$
Since $|a^4-b^4|\geq 
\max\{a,b\}^{3}|a-b|$
for positive integers $a,b$, we
see that
\begin{equation}\label{majMab}\begin{split}
M_{a,b}(B)
&\ll  \tau(|a^4-b^4|)\Big(1+
\frac{B}{\max\{a,b\}^{5/3}|a-b|^{1/3}}\Big). 
\end{split}\end{equation}
Thus there are two terms to consider, both of which we must sum over
the relevant values of $a,b.$  
It follows from Lemma \ref{lem:nair} that the first term in 
contributes 
$\ll B(\log B)^3$, which is satisfactory.
Moreover, it follows from Lemma \ref{lem:nair'} with 
$(p,q)=(5/3,1/3)$
that the second term also makes a satisfactory contribution. 
This completes the proof of the lemma.
\end{proof}

Recall the definitions \eqref{eq:zet} of $Z_1$ and $Z_2$.
Then $\log Z_1=(\log B)/(\log\log B)$ and 
$2^{Z_2}=(\log B)^{\log 2}$.
Our next task is twofold. Firstly, we would like to be able to
restrict attention to values of $a,b$ belonging to the set
\begin{equation}
   \label{eq:AA}
   \A_1:=\left\{(a,b)\in \NN^2:
\begin{array}{l}
\max\{a,b\}< \sqrt{B}/Z_1^c,\\
\max\{a,b\}\leq Z_2^2\min\{a,b,|a-b|\},\\
\cp{a}{b}, ab\neq 1,\\
\nu_2(a^2-b^2)\leq Z_2
\end{array}
\right\}.
\end{equation}
Here  $c>0$ is an absolute constant that
when appearing as an exponent of $Z_1$, we will view as being
sufficiently large to ensure that all of our error terms are
satisfactory. Secondly, we wish to show that $\tM_{a,b}(B)$ can be
replaced by $M_{a,b}(B)$ in Lemma~\ref{lem:red-2}, with a negligible error.
Both of these objectives are achieved in the following result.

\begin{lemma}\label{lem:red-3}
We have
$$
N_1(B)=2\sum_{(a,b)\in \A_1} M_{a,b}(B)
+O\Big(\frac{B(\log B)^4}{\log\log B}\Big),
$$
where $M_{a,b}(B)$ is given by \eqref{eq:def-M'} 
and $\A_1$ is given by \eqref{eq:AA}.
\end{lemma}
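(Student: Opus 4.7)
The plan is to start from Lemma \ref{lem:red-2}, whose $O(B(\log B)^3)$ error is absorbable into the target, and effect two modifications: replace $\tM_{a,b}(B)$ by $M_{a,b}(B)$ and restrict the summation range to $\A_1$. Since $\tM_{a,b}\leq M_{a,b}$ always, the triangle inequality reduces the task to bounding
\begin{align*}
\mathrm{(I)}:=\sum_{(a,b)\in\A_1}\!\bigl(M_{a,b}(B) - \tM_{a,b}(B)\bigr)
\quad\text{and}\quad
\mathrm{(II)}:=\!\!\!\!\sum_{\substack{a,b<\sqrt{B},\,\gcd(a,b)=1\\ab\neq 1,\,(a,b)\notin\A_1}}\!\!\!\! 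M_{a,b}(B)
\end{align*}
each by $O(B(\log B)^4/\log\log B)$.

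For $\mathrm{(I)}$, the difference $M_{a,b}(B) - \tM_{a,b}(B)$ counts primitive $(x,y,z)\in C_{a,b}\cap\NN^3$ with $\max\{x,y\} \leq \max\{a,b\}$; the conic equation then forces $|z| \ll \max\{a,b\}^2$. I would invoke Lemma \ref{lem:bhb} with $B_1=B_2=\max\{a,b\}$ and $B_3\asymp\max\{a,b\}^2$. The $\A_1$-constraint $|a-b|\geq \max\{a,b\}/Z_2^2$ yields $|a^4-b^4|\gg \max\{a,b\}^4/Z_2^2$, so the lemma gives $M_{a,b}(B)-\tM_{a,b}(B) \ll Z_2^{2/3}\,\tau(|a^4-b^4|)$. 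Applying Lemma \ref{lem:nair} to $F(a,b)=(a-b)(a+b)(a^2+b^2)$, which has three distinct irreducible factors over $\QQ$, then gives $\mathrm{(I)}\ll Z_2^{2/3}B(\log B)^3$; this is $o(B(\log B)^4/\log\log B)$ since $(\log\log B)^{5/3}=o(\log B)$.

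For $\mathrm{(II)}$, I would split the complement of $\A_1$ by a union bound according to which defining inequality of \eqref{eq:AA} fails. The ranges $\max\{a,b\}\geq \sqrt{B}/Z_1^c$ and $\max\{a,b\}>Z_2^2\min\{a,b,|a-b|\}$ are dispatched directly by Lemma \ref{lem:ab-small} applied with $T=Z_1^c$ and $T=Z_2^2$, contributing $\ll B(\log B)^3\log Z_1\ll B(\log B)^4/\log\log B$ and $\ll B(\log B)^4/Z_2^{4/3}+B(\log B)^3$ respectively.

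The main obstacle is the residual piece, where the first two $\A_1$-conditions hold but $\nu_2(a^2-b^2)>Z_2$. Since $\gcd(a,b)=1$, this forces both $a,b$ to be odd and $2^{Z_2}$ to divide one of $a\pm b$, placing $(a,b)$ in a sparse set of density $\asymp 1/2^{Z_2}\asymp 1/\log B$. I would invoke \eqref{majMab}. The first term $\tau(|a^4-b^4|)$, summed over this set via Lemma \ref{lem:nair} applied to the binary form obtained by the linear change of variables $a=b+2^{Z_2}k$ (which retains three distinct irreducible factors), contributes $\ll B(\log B)^3/2^{Z_2}\ll B(\log B)^2$. For the second term, I would exploit $|a-b|\geq \max\{a,b\}/Z_2^2$ (which still holds) to bound the summand by $\ll Z_2^{2/3}B\tau(|a^4-b^4|)/\max\{a,b\}^2$, then perform a dyadic decomposition on $\max\{a,b\}$ and apply Lemma \ref{lem:nair} on the sparse sublattice, obtaining $\ll Z_2^{2/3}B(\log B)^3$, again absorbable in the target error.
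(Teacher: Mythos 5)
Your proposal is correct, and its outer skeleton (Lemma \ref{lem:red-2} plus the two applications of Lemma \ref{lem:ab-small} with $T=Z_1^c$ and $T=Z_2^2$) coincides with the paper's, but the two delicate sub-steps are handled by genuinely different tools. For the replacement of $\tM_{a,b}(B)$ by $M_{a,b}(B)$, the paper observes that the difference only involves points of height $\leq B/Z_1^{2c}$ and re-runs the \eqref{majMab}-based argument of Lemma \ref{lem:ab-small} at that reduced height, gaining a power of $Z_1$; you instead bound the difference pointwise by $Z_2^{2/3}\tau(|a^4-b^4|)$ via Lemma \ref{lem:bhb} (legitimately using the $\A_1$ constraint $|a-b|\geq\max\{a,b\}/Z_2^2$) and then sum with Lemma \ref{lem:nair}, which loses a factor $(\log\log B)^{2/3}$ but is still comfortably within the error term and is arguably more direct. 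For the range $\nu_2(a^2-b^2)>Z_2$, the paper inserts the factor $\nu_2(a^2-b^2)/Z_2\geq 1$, absorbs $\nu_2$ into a modified divisor function $\tau''$ and invokes a $\tau''$-variant of Lemma \ref{lem:sky} together with the \eqref{eq:anna-swim} machinery, saving exactly $1/\log\log B$; you instead exploit the $2$-adic sparsity of the set $\{2^{\lfloor Z_2\rfloor}\mid a\pm b\}$ directly via a linear substitution and Lemma \ref{lem:nair}, which in fact saves a power $(\log B)^{-\log 2}$ on this piece. Two small imprecisions, neither fatal: $2^{Z_2}=(\log B)^{\log 2}$, not $\asymp\log B$ (a slip the paper itself makes in \S\ref{s:ham}), so your claimed bounds $B(\log B)^2$ and $Z_2^{2/3}B(\log B)^3$ should read $B(\log B)^{3-\log 2+\ve}$ and $Z_2^{2/3}B(\log B)^{4-\log 2}$, which still lie well inside $O(B(\log B)^4/\log\log B)$; and after the substitution $a=b+2^{\lfloor Z_2\rfloor}k$ the resulting quartic form vanishes at $(1,0)$, so Lemma \ref{lem:nair} does not apply verbatim --- one should either split off the factor proportional to $k$ by submultiplicativity of $\tau$ and use the one-variable Lemma \ref{lem:sky}, or note that the paper applies Lemma \ref{lem:nair} in exactly the same way to $uv(u^2+v^2)$, so this is a shared, easily repaired convention rather than a gap in your argument.
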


\begin{proof}
Taken together with Lemma \ref{lem:red-2}, an application of Lemma
\ref{lem:ab-small} easily implies that
$$
N_1(B)=2\sum_{\substack{\max\{a,b\}<\sqrt{B}/Z_1^c\\
\max\{a,b\}\leq Z_2^2\min\{a,b,|a-b|\}\\
\hcf(a,b)=1, ab\neq 1}}
\tM_{a,b}(B) +
O\Big(\frac{B(\log B)^4}{\log \log B}\Big).
$$
Let us proceed by indicating how
to replace $\tM_{a,b}(B)$ by $M_{a,b}(B)$ in the summand.
Using the fact that $\max\{a,b\}< \sqrt{B}/Z_1^c$ it is easy to see that
\begin{align*}
M_{a,b}(B)-\tM_{a,b}(B)
&\leq
\#\left\{(x,y,z)\in C_{a,b}\cap \NN^3:
\begin{array}{l}
\hcf(x,y)=1,\\ 
\max\{x,y\} < \sqrt{B}/Z_1^c
\end{array}
\right\}\\
& \ll  \tau(|a^4-b^4|)\Big(1+
\frac{B^{2/3}}{Z_1^{4c/3}\max\{a,b\}|a-b|^{1/3}}\Big),
\end{align*}
on taking $B_1=B_2=\sqrt{B}/Z_1^c$ and $B_3=\sqrt{2}B/Z_1^{2c}$ in Lemma
\ref{lem:bhb}.
Employing the simple upper bound 
$\tau(|a^4-b^4|)\leq Z_1^{O(1)}$,  we therefore obtain the overall contribution 
$$
\ll \frac{BZ_1^{O(1)}}{Z_1^{2c}}+
\frac{B^{2/3}Z_1^{O(1)}}{Z_1^{4c/3}}\sum_{a,b< \sqrt{B}/Z_1^c} \frac
{1}{\max\{a,b\}|a-b|^{1/3}}
\ll \frac{BZ_1^{O(1)}}{Z_1^{2c}}.
$$
This is satisfactory if $c$ is chosen to be large enough. 

In order to restrict to a summation over $(a,b)\in \A_1$ we must
consider the contribution from $a,b$ such that $\nu_2(a^2-b^2) > Z_2$.
In view of Lemma \ref{lem:nair}, Lemma \ref{lem:nair'} and 
\eqref{majMab}, one sees that the overall 
contribution is bounded by
\begin{align*}
&\ll \sum_{\substack{a,b 
<\sqrt{B}/Z_1^c\\\hcf(a,b)=1, ab\neq 
1\\\nu_2(a^2-b^2) > Z_2}}
   \tau(|a^4-b^4|)\Big(1+ \frac{B}{\max\{a,b\}^{5/3}|a-b|^{1/3}}\Big)\\
&\ll 
 \frac{B(\log B)^3}{Z_1^{2c}}+
\frac{B}{Z_2}
\sum_{\substack{a,b 
<\sqrt{B}/Z_1^c\\ a-b\neq 0}}
   \frac{\nu_2(a^2-b^2)\tau(|a^4-b^4|)}{\max\{a,b\}^{5/3}|a-b|^{1/3}}\\
&\ll \frac{B(\log B)^4}{\log \log B}.
\end{align*}
This is satisfactory.
\end{proof}

The proof of Lemma \ref{lem:red-3} exhibits a feature 
common to much of what follows. 
The notation $Z_1^{O(1)}$ means that we allow an arbitrary,
but absolutely bounded, power of $Z_1$. The term
$Z_1^{c}$ should be thought of as a parameter since we are
free to take $c$ large enough to nullify the effect of the $O(1)$.
Thus one may think of $Z_1^{O(1)}/Z_1^c$ as smaller than any
negative power of $\log B$.

\section{Parametrisation of the conics}\label{sec:par}

In this section we concern ourselves with estimating $M_{a,b}(B)$, as
given by \eqref{eq:def-M'}. Recall the definition \eqref{eq:conic} of
$C_{a,b}$. Fundamental to our approach is the
observation that for each $a,b\in\NN$, the conic $C_{a,b}$
always contains the rational point $\xi$ in \eqref{eq:xi}.
We are therefore in a position to parametrise all of the rational
points on the conic by considering the residual intersection with
$C_{a,b}$ of an arbitrary line through $\xi$.
Define the binary quadratic forms
\begin{equation}\label{eq:Qi}
\begin{split}
Q_1(s,t)&:=2s^2+(a^2-b^2)t^2-4ast, \\
Q_2(s,t)&:=-2s^2+(a^2-b^2)t^2, \\
Q_3(s,t)&:=-2as^2 +2(a^2-b^2) st- a(a^2-b^2)t^2,
\end{split}
\end{equation}
for given $a,b\in \NN$ such that $\cp{a}{b}$ and $ab\neq 1$. In
particular we will always have $a^2\pm b^2\neq 0$.
One easily checks that
\begin{equation}
   \label{eq:velib}
   Q_3(s,t)=t^{-1}\big(sQ_1(s,t)+(s-at)Q_2(s,t)\big).
\end{equation}
The main aim of this section is to establish the following
result.

\begin{lemma}\label{lem:para-1}
We have
$$
M_{a,b}(B)=\#\left\{
(s,t)\in \ZZ^2:
\begin{array}{l}
\cp{s}{t}, ~s(s-at)\neq 0,\\
s/t\neq (a^2 -b^2)/(2a),\\
t>0, ~0<- Q_3(s,t),\\
\mbox{$0<-Q_j(s,t)\leq \frac{\lambda B}{\max\{a,b\}}$ for $j=1,2$}
\end{array}
  \right\}+O(1),
$$
where $\lambda=\gcd(Q_1(s,t),Q_2(s,t))$ and the implied constant is
absolute.
\end{lemma}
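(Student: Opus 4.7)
The plan is to use the rational point $\xi = [1,-1,a]$ to parametrise $C_{a,b}$, establish an explicit near-bijection between primitive $(s,t) \in \ZZ^2$ with $t > 0$ and the triples enumerated by $M_{a,b}(B)$, and translate each of the defining constraints of $M_{a,b}(B)$ into conditions on $(s,t)$.

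First I would verify the polynomial identity $(a^2-b^2)Q_1^2 + (a^2+b^2)Q_2^2 = 2Q_3^2$ in $\ZZ[a,b,s,t]$, ensuring that the assignment $[s:t] \mapsto [Q_1(s,t):Q_2(s,t):Q_3(s,t)]$ defines a morphism $\psi \colon \PP^1 \to C_{a,b}$. Evaluating at the two critical points gives $\psi([1:0]) = [1:-1:-a]$ (which has no $\NN^3$ representative) and $\psi([(a^2-b^2):2a]) = [1:-1:a] = \xi$ (again with negative $y$); a brief $\gcd$ check shows the $Q_j$ have no common polynomial factor, so $\psi$ is a non-constant morphism between smooth rational curves, hence an isomorphism. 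It follows that $\psi$ induces a bijection between primitive $(s,t) \in \ZZ^2$ with $t > 0$ and $C_{a,b}(\QQ) \setminus\{[1:-1:-a]\}$; the omitted point contributes $0$ to $M_{a,b}(B)$.

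Given any $(x,y,z) \in C_{a,b}\cap\NN^3$ with $\gcd(x,y) = 1$, let $(s,t)$ be the unique primitive pair with $t > 0$ and $\psi([s:t]) = [x:y:z]$. Then $(Q_1, Q_2, Q_3) = \kappa(x, y, z)$ for some $\kappa \in \QQ^*$, and $\gcd(x,y) = 1$ together with $\kappa x = Q_1$, $\kappa y = Q_2$ forces $\kappa \in \ZZ$ with $|\kappa| = \gcd(Q_1, Q_2) =: \lambda$. The identity \eqref{eq:velib} combined with $\gcd(s,t) = 1$ then confirms $\lambda \mid Q_3$, so that $z = Q_3/\kappa$ is also an integer. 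Taking $\lambda > 0$, positivity of $x,y,z$ forces $\kappa = -\lambda$, translating the requirement $(x,y,z) \in \NN^3$ into the three conditions $-Q_j > 0$ for $j \in \{1,2,3\}$, while the bound $\max\{a,b\}\max\{x,y\} \leq B$ becomes $\max\{-Q_1, -Q_2\} \leq \lambda B/\max\{a,b\}$. The exclusion $s/t \neq (a^2-b^2)/(2a)$ removes the pre-image of $\xi$, and $s(s-at) \neq 0$ rules out the triple $(x,y,z) = (1,1,a)$ (arising from the primitive pair $(s,t) = (a,1)$), which is counted in $M_{a,b}(B)$ but omitted from the right-hand side, contributing at most $O(1)$ to the error.

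The delicate step is confirming that $\lambda = \gcd(Q_1, Q_2)$ automatically divides $Q_3$, since \eqref{eq:velib} only yields $\lambda \mid tQ_3$ a priori. For any odd prime $p$, the divisibility $p \mid \gcd(\lambda, t)$ would force $p \mid 2s^2 = (a^2-b^2)t^2 - Q_2$, hence $p \mid s$, contradicting $\gcd(s,t) = 1$; thus the odd part of $\lambda$ is coprime to $t$ and divides $Q_3$ directly. The $2$-adic contribution is settled by a short parity analysis of $Q_3 = -2as^2 + 2(a^2-b^2)st - a(a^2-b^2)t^2$ split according to the parities of $a, b, s, t$, after which the remaining translation of constraints is routine bookkeeping.
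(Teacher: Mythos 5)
Your parametrisation is essentially the paper's (the inverse of projection from $\xi$, with $(x,y,z)=\pm(Q_1,Q_2,Q_3)/\lambda$ and $O(1)$ exceptional points, including the correct identification of $(s,t)=(a,1)\mapsto(1,1,a)$), but there is a genuine gap at the sign step. From $(Q_1,Q_2,Q_3)=\kappa(x,y,z)$ with $x,y,z>0$ and $|\kappa|=\lambda$ you may only conclude that $Q_1,Q_2,Q_3$ all carry the sign of $\kappa$; positivity of $(x,y,z)$ by itself is compatible with $\kappa=+\lambda$ just as well as with $\kappa=-\lambda$, so the assertion ``positivity of $x,y,z$ forces $\kappa=-\lambda$'' is precisely the claim that needs proof. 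One must show that for $t>0$ and $a,b\in\NN$ the forms $Q_1,Q_2,Q_3$ can never be simultaneously positive; this is where the paper spends the last part of its proof: $Q_2>0$ and $t>0$ force $a>b$, and then $Q_3=-\sqrt{a^2-b^2}\,(s-t')^2-(2a-\sqrt{a^2-b^2})s^2-(a-\sqrt{a^2-b^2})\,t'^2\le 0$ with $t'=t\sqrt{a^2-b^2}$, a contradiction (equivalently, $Q_3>0$ would give $2(a^2-b^2)st>2as^2+a(a^2-b^2)t^2\ge 2\sqrt{2}\,a\sqrt{a^2-b^2}\,st$, i.e.\ $a^2-b^2>2a^2$). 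Without some such argument you have only established the paper's intermediate identity in which one sums over both signs $\ve=\pm1$, not the stated lemma.

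Two smaller points. The inference ``non-constant morphism between smooth rational curves, hence an isomorphism'' is false in general (consider $[s:t]\mapsto[s^2:t^2]$); what rescues it here is that $Q_1,Q_2,Q_3$ are quadratics with no common factor while the image conic has degree $2$, so the induced map $\Pone\to C_{a,b}$ has degree $1$ --- or simply observe, as the paper does, that $\psi$ inverts projection from $\xi$. Also, your $2$-adic verification of $\lambda\mid Q_3$ needs more than parities: by Lemma \ref{lem:la} the power of $2$ dividing $\lambda$ can be as large as $\min\{2+\nu_2(s),\nu_2(a^2-b^2)\}$, so a genuine valuation computation is required; in fact the divisibility is immediate from the identity $(a^2-b^2)Q_1^2+(a^2+b^2)Q_2^2=2Q_3^2$, since any primitive integer representative of a point of $C_{a,b}(\QQ)$ has coprime first two coordinates, whence $\gcd(Q_1,Q_2,Q_3)=\gcd(Q_1,Q_2)=\lambda$.
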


\begin{proof}
Recall the definitions \eqref{eq:conic} and \eqref{eq:xi} of $C_{a,b}$
and $\xi$, respectively. Let
$$
L_{\xi}:=\big\{(a^2-b^2)x-(a^2+b^2)y-2az=0\big\}.
$$
This is the tangent line to $C_{a,b}$ along $\xi$.
Let $\mathcal{L}$ denote the set of projective lines in~$\Ptwo$
that pass through $\xi$, and let $\mathcal{L}(\QQ)$ be the corresponding
subset that are defined over~$\QQ$. We will write
$\mathcal{U}=\mathcal{L}\setminus \{L_\xi\}.$
Now let $U\subset C_{a,b}$ denote the open subset formed by deleting
$\xi$ from the conic.
The sets $U(\QQ)$ and $\mathcal{U}(\QQ)$ are in
bijection.

The general element of $\mathcal{L}(\QQ)$ is given by
$$
L_{s,t}:=\big\{sx+(s-at)y-tz=0\big\},
$$
for $s,t \in \ZZ$ such that $\gcd(s,s-at,t)=\gcd(s,t)=1$.
In order to have a point in $\mathcal{U}(\QQ)$ we must insist 
that $(s,t)\neq (a^2-b^2,2a)$. We can replace this by the conditions
$t\neq 0$ and 
$s/t\neq (a^2 -b^2)/(2a)$, with an error $O(1)$. 
Finally we note that there 
is a bijection between the lines for which $t<0$ and
$t>0$. We henceforth fix our attention on the latter.

To make the bijection between $U(\QQ)$ and $\mathcal{U}(\QQ)$
completely explicit a routine calculation reveals that a point
$[x,y,z]$ is in the intersection $L_{s,t}\cap C_{a,b}$
if and only if~$x=-y$, or else $Q_2(s,t)x=Q_1(s,t)y$, in the
notation of \eqref{eq:Qi}. The first alternative leads us to the point
$\xi$, which is to be ignored. The second alternative
implies that
$$
tQ_1(s,t)z=\big(sQ_1(s,t)+(s-at)Q_2(s,t)\big)x=tQ_3(s,t)x,
$$
by \eqref{eq:velib}, in the equation for 
$L_{s,t}$.  Any vector $(x,y,z)\in \ZZ^3$ that
represents a point in $C_{a,b}(\QQ)$ is primitive if and only if $\gcd(x,y)=1$.
Writing $\lambda$ as in the statement of the lemma, we therefore find that
$$
(x,y,z)=\pm (Q_1(s,t)/\lambda, Q_2(s,t)/\lambda, Q_3(s,t)/\lambda).
$$
for any $[x,y,z]\in (U\cap L_{s,t})(\QQ)$.

So far we have recorded an explicit bijection between elements of
$U(\QQ)$ and $\mathcal{U}(\QQ)$. In deriving an expression for
$M_{a,b}(B)$ in terms of this bijection we will need to restrict the
corresponding values of $s,t$ that are to be considered.
Specifically we will only be interested in the values of $(s,t)\in
\ZZ\times \ZZ_{>0}$ for which $\cp{s}{t}$ and the 
corresponding values of $x,y,z$ lie in
the region defined by the inequalities
$$
x,y,z>0, \quad \max\{a,b\}\max\{x,y\}\leq B.
$$
Finally, we will want to exclude the possibility
that $s(s-at)=0$. Since there are only $O(1)$ such
values of $s,t$ to worry about, this therefore
concludes the proof that
$$
M_{a,b}(B)=\sum_{\ve\in\{\pm 1\}}\#\left\{
(s,t)\in \ZZ^2:
\begin{array}{l}
\cp{s}{t}, ~s(s-at)\neq 0,\\
s/t\neq (a^2 -b^2)/(2a),\\
t>0, ~0<\ve Q_3(s,t),\\
\mbox{$0<\ve Q_j(s,t)\leq \frac{\lambda B}{\max\{a,b\}}$ for $j=1,2$}
\end{array}
  \right\}+O(1).
$$

Our final task is to show that $\ve=-1$ in this expression.
Suppose that $\ve=+1$. The conditions imply that
$Q_2(s,t)>0$ and $t>0$.
But then it follows that $\sign(a-b)=1$. 
However  we also have $Q_3(s,t)>0$, whence
$$
-\sqrt{a^2-b^2}(s-t')^2-(2a-\sqrt{a^2-b^2})s^2-(a-\sqrt{a^2-b^2}){t'}^2>0,
$$
where we have written ${t'}=t\sqrt{a^2-b^2} $ for ease of notation.
This contradiction  establishes the lemma.
\end{proof}

Lemma \ref{lem:para-1} allows us to translate the underlying problem
to one that involves counting primitive lattice points in a
complicated region contained in $\RR^2$. Before we proceed to consider
this region in more detail, it will be necessary to gain a better
understanding of the greatest common divisor $\lambda$.

\begin{lemma}\label{lem:la}
Let $a,b$ be coprime positive
integers such that $ab\neq 1$.
Let $s,t$ be coprime integers and let
$\lambda=\gcd(Q_1(s,t),Q_2(s,t))$.  Then
we have $\lambda=2^\nu \lambda_1\lambda_2$, where
\begin{equation}
   \label{eq:nu}
\nu=
\begin{cases}
0, & \mbox{if $2\mid ab$ and $2\nmid t$},\\
1, & \mbox{if $2\mid ab$ and $2\mid t$},\\
1, & \mbox{if $2\nmid ab$ and $2\nmid s$},\\
\min\{2+\nu_2(s), \nu_2(a^2-b^2)\}, & \mbox{if $2\nmid ab$ and $2\mid s$},
\end{cases}
\end{equation}
and
$$
\lambda_1=\odd{s}{a^2-b^2}, \quad
\lambda_2=\odd{s-at}{a^2+b^2},
$$
in the notation of \eqref{eq:odd-notat}.
\end{lemma}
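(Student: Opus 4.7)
My plan is to analyse the odd part and the $2$-part of $\lambda$ separately, exploiting the key identities
$$
Q_1-Q_2 = 4s(s-at), \qquad Q_1+Q_2 = 2t\bigl((a^2-b^2)t-2as\bigr),
$$
which follow at once from \eqref{eq:Qi}. For an odd prime $p\mid \lambda$, I first note that $p\nmid t$: otherwise $Q_2\equiv -2s^2\pmod p$ would force $p\mid s$, contradicting $\gcd(s,t)=1$. Then $p\mid Q_1-Q_2$ yields $p\mid s$ or $p\mid s-at$, and these cases are mutually exclusive because $p$ dividing both would give $p\mid at$, hence $p\mid a$, and combined with $p\mid Q_2$ this would force $p\mid b$, contradicting $\gcd(a,b)=1$. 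In the first case, reducing $Q_2$ modulo $p$ gives $p\mid a^2-b^2$, so $p\mid \lambda_1$; in the second, substituting $s\equiv at\pmod p$ in $Q_2$ gives $p\mid a^2+b^2$, so $p\mid \lambda_2$. Lifting to prime powers uses the same identities: if $p^k\|\lambda$ falls in the first case, then $p\nmid s-at$ together with $p^k\mid 4s(s-at)$ forces $p^k\mid s$, and then $p^k\mid 2t((a^2-b^2)t-2as)$ with $p\nmid 2t$ gives $p^k\mid a^2-b^2$. The reverse divisibility $\lambda_1\lambda_2\mid\lambda$ is immediate from \eqref{eq:Qi}, and together with the coprimality of $\lambda_1$ and $\lambda_2$ just established this pins down the odd part of $\lambda$ as $\lambda_1\lambda_2$.

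For the $2$-adic exponent $\nu$, I split into cases on the parities of $a,b,s,t$. If $2\mid ab$, then $a^2\pm b^2$ are odd; a direct computation gives $Q_2$ odd when $t$ is odd (hence $\nu=0$), whereas when $t$ is even both $Q_1,Q_2$ are $\equiv 2\pmod 4$, giving $\nu=1$. If instead $a,b$ are both odd, then $a^2\equiv b^2\equiv 1\pmod 8$, so $8\mid a^2-b^2$ and $a^2+b^2\equiv 2\pmod 8$. When $s$ is also odd we obtain $Q_2\equiv -2s^2\equiv 6\pmod 8$ and $Q_1\equiv Q_2\pmod 4$, forcing $\nu=1$. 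The interesting situation is when $s$ is even (so $t$ is odd): writing $\alpha=\nu_2(s)\geq 1$ and $\beta=\nu_2(a^2-b^2)\geq 3$, the three terms of $Q_1=(a^2-b^2)t^2-4ast+2s^2$ have $2$-adic valuations $\beta,\alpha+2,2\alpha+1$ respectively, while $Q_2$ involves only the first and the third. Noting that $\alpha+2\leq 2\alpha+1$ with equality iff $\alpha=1$, and using the additional cancellation $2s^2-4ast=8(s/2)(s/2-at)$ available in the borderline case $\alpha=1$ (both factors being odd), a short inspection of all orderings of $\beta,\alpha+2,2\alpha+1$ shows $\min(\nu_2(Q_1),\nu_2(Q_2))=\min(\alpha+2,\beta)$ as claimed.

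The combinatorial bookkeeping in the first two paragraphs is routine; the main obstacle is the fine $2$-adic analysis in the last sub-case, where the two governing terms of $Q_1$ and $Q_2$ have valuations that can coincide, a priori raising the risk of extra cancellation. To settle this I intend to verify directly modulo $2^{\beta+1}$ that in each boundary regime ($\beta=2+\alpha$, $\beta=2\alpha+1$, or $\alpha=1$) one of $Q_1,Q_2$ retains the exact predicted valuation $\min(\alpha+2,\beta)$, so that the gcd cannot exceed this bound. Together with the obvious lower bound obtained from reading off common factors of $2^{\min(\alpha+2,\beta)}$ in $Q_1$ and $Q_2$, this pins $\nu$ down exactly.
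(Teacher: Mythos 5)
Your route is essentially the paper's: both proofs rest on the identity $Q_1-Q_2=4s(s-at)$, determine the odd part of $\lambda$ by showing that each odd prime power dividing $\lambda$ must divide either $\gcd(s,a^2-b^2)$ or $\gcd(s-at,a^2+b^2)$ (with the two options mutually exclusive because $\gcd(a,b)=1$), and then determine $\nu$ by a parity case analysis; the paper phrases the odd part through the multiplicativity of $\odd{\cdot}{\cdot}$ rather than prime by prime, but that is cosmetic, and your extra use of $Q_1+Q_2$ is harmless. The one real divergence is in the sub-case $2\nmid ab$, $2\mid s$: the paper works with the pair $\big(Q_1-Q_2,Q_2\big)=\big(4s(s-at),Q_2\big)$, for which $\nu_2(4s(s-at))=2+\nu_2(s)$ exactly (since $s-at$ is odd there), so that $\nu=\min\{2+\nu_2(s),\nu_2(Q_2)\}$ and the comparison with $\nu_2(a^2-b^2)$ is immediate, with no coincidences to worry about. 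You instead keep $Q_1$ itself, which is why you are forced into the borderline discussion of the valuations $\beta$, $\alpha+2$, $2\alpha+1$ of its three terms.

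In that discussion there is a concrete error: with $\alpha=\nu_2(s)=1$ and $a,t$ odd, the factor $s/2-at$ in $2s^2-4ast=8(s/2)(s/2-at)$ is \emph{even} (odd minus odd), not odd as you assert. Consequently, when $\alpha=1$ and $\beta>3$ one has $\nu_2(2s^2-4ast)\geq 4$ and hence $\nu_2(Q_1)\geq 4$; the required upper bound $\nu\leq 3$ then comes from $Q_2$, whose valuation is exactly $1+2\alpha=3$. Conversely, when $\alpha=1$ and $\beta=3$ it is $Q_2$ that suffers the extra cancellation ($-2s^2\equiv(a^2-b^2)t^2\equiv 8\bmod{16}$, so $\nu_2(Q_2)\geq 4$), while $Q_1\equiv 8\bmod{16}$ gives the exact value $3$. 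So the direct verification you defer to in your last paragraph does go through and your stated boundary regimes are the right ones, but the roles of $Q_1$ and $Q_2$ swap between them and the parenthetical justification as written is false. All of this bookkeeping evaporates if you simply replace $Q_1$ by $Q_1-Q_2=4s(s-at)$ before taking $2$-adic valuations, which is exactly how the paper obtains the last line of \eqref{eq:nu} from \eqref{eq:Qi}.
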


\begin{proof}
Let us write $\lambda=2^\nu \lambda'$ for $\nu\geq 0$ and
$\lambda'\in \NN$ odd.
Observe that
\begin{equation}
   \label{eq:lam-exp}
\begin{split}
\lambda=\hcf(Q_1,Q_2)&=\hcf(Q_1-Q_2, Q_2)=\gcd(4s(s-at), -2s^2+(a^2-b^2)t^2).
\end{split}
\end{equation}
The precise value of $\nu$ will depend intimately on the $2$-adic
valuations of $a,b,s$ and~$t$. Let us write
$$
a=2^\alpha a', \quad b=2^\beta b', \quad s=2^\sigma s', \quad t=2^\tau t',
$$
for $\alpha, \beta, \sigma, \tau\geq 0$ and $a'b's't'$ odd.
It follows from \eqref{eq:lam-exp} that
$$
\nu=\min\{2+\sigma+\nu_2(2^\sigma s'-2^{\alpha+\tau}t'a') ,
\nu_2(-2^{1+2\sigma}{s'}^2+(2^{2\alpha}{a'}^2-2^{2\beta}{b'}^2)2^{2\tau}{t'}^2)
\}.
$$
Suppose first that $\alpha=\beta=0$. Then
$$
\nu=\min\{2+\sigma+\nu_2(2^\sigma s'-2^{\tau}t'a') ,
\nu_2(-2^{1+2\sigma}{s'}^2+({a'}^2-{b'}^2)2^{2\tau}{t'}^2)
\}.
$$
Thus either $\sigma=0$, in which case $\nu=1$, or else $\sigma\geq
1$. In the latter case $\tau=0$ and it follows that
$$
\nu=\min\{2+\sigma ,
\nu_2(-2^{1+2\sigma}{s'}^2+({a'}^2-{b'}^2){t'}^2)\}=\min\{2+\sigma
,\nu_2({a'}^2-{b'}^2)\}.
$$
Differentiating according to whether $\alpha\geq 1$ and $\beta=0$, or
$\alpha=0$ and $\beta\geq 1$,
it is easily checked that
$$
\nu=
\begin{cases}
1, & \mbox{if $\tau \geq 1$,}\\
0, & \mbox{if $\tau =0$.}
\end{cases}
$$

Turning to the odd part $\lambda'$ of $\lambda$, 
we deduce from \eqref{eq:lam-exp}
that
\begin{align*}
\lambda'
&=\odd{s(s-at)}{-2s^2+(a^2-b^2)t^2}\\
&=\odd{s}{-2s^2+(a^2-b^2)t^2}
\odd{s-at}{-2s^2+(a^2-b^2)t^2}\\
&=\odd{s}{a^2-b^2}\odd{s-at}{a^2+b^2}.
\end{align*}
This completes the proof of the lemma.
\end{proof}

Note that the last equality in \eqref{eq:nu} is only possible if
$\nu\geq 2$ since $a^2-b^2$ is divisible by $4$ if $a$ and
$b$ are both odd.
Define the three quadratic forms
\begin{equation}
   \label{eq:def_pqr}
   \begin{split}
p_u(s,t)&:-2s^2- \frac{1-u^2}{|1-u^2|}
t^2+\frac{4}{\sqrt{|1-u^2|}} st,\\
q_u(s,t)&:=2s^2 - \frac{1-u^2}{|1-u^2|}  t^2,\\
r_u(s,t)&:=2s^2-2 \frac{1-u^2}{\sqrt{|1-u^2|}} st+
\frac{1-u^2}{|1-u^2|}t^2,
\end{split}
\end{equation}
for any positive $u\neq 1$.  We may then write
$$
Q_1(s,t)=-p_{b/a}(s,\alpha t),\quad
Q_2(s,t)=-q_{b/a}(s,\alpha t), \quad
Q_3(s,t)=-a r_{b/a}(s,\alpha t),
$$
in \eqref{eq:Qi}, where $\alpha=\sqrt{|a^2-b^2|}$.

For any $X>0$ we define the region
\begin{equation}   \label{eq:RBA}
\begin{split}
\R(X)
&:=
   \left\{(s,t)\in \RR\times \RR_{>0}:
\begin{array}{l}
Q_3(s,t)<0,\\
0>Q_1(s,t), Q_2(s,t)\geq -X
\end{array}
\right\}\\
&=
   \left\{(s,t)\in \RR\times \RR_{>0}:
\begin{array}{l}
0<r_{b/a}(s,\sqrt{|a^2-b^2|}t),\\
0<p_{b/a}(s,\sqrt{|a^2-b^2|}t)\leq X,\\
0<q_{b/a}(s,\sqrt{|a^2-b^2|}t)\leq X
\end{array}
\right\}.
\end{split}
\end{equation}
Furthermore, we set
\begin{equation}
   \label{eq:RBA'}
\R'(X):=
   \left\{(s,t)\in \R(X): s(s-at)\neq 0,~s/t\neq (a^2 -b^2)/(2a)
\right\}
\end{equation}
and 
\begin{equation}\label{eq:R-dag}
\R^\dagger(X):=
 \left\{(s,t)\in \RR^2:
\begin{array}{l}
\mbox{$Q_j(s,t)\neq 0$ for $1\leq j\leq 3$},\\
|Q_1(s,t)|, |Q_2(s,t)|\leq X\\
st(s-at)\neq 0,~s/t\neq (a^2 -b^2)/(2a)
\end{array}
\right\}.
\end{equation}
Bringing together Lemmas \ref{lem:para-1} and \ref{lem:la}, we may
deduce that
\begin{equation}\label{eq:ruff}
M_{a,b}(B)=\sum_{\nu=0}^\infty
\Osum_{\lambda_1\mid a^2-b^2}
\Osum_{\lambda_2\mid a^2+b^2}
L(a,b;B;\nu, \la_1, \la_2) +O(1),
\end{equation}
where
$L(B)=L(a,b;B;\nu, \la_1, \la_2)$ is the number of
$(s,t)\in \ZZ^2$ subject to the following conditions:
\begin{enumerate}
\item\label{i:1} $\cp{s}{t}$;
\item\label{i:2} $(s,t)\in \R(2^\nu \la_1\la_2 B/\max\{a,b\})$;
\item\label{i:-} $s/t\neq (a^2 -b^2)/(2a) $
and $s(s-at)\neq 0$;
\item\label{i:5} $\la_1 \mid s$ and $\la_2 \mid s-at$;
\item\label{i:6} $\odd{s/\la_1}{(a^2-b^2)/\la_1}=1$ and
$\odd{(s-at)/\la_2}{(a^2+b^2)/\la_2}=1$; and
\item\label{i:7} the $2$-adic orders of $s,t$ are 
determined by $\nu$ via \eqref{eq:nu}.
\end{enumerate}
Here we recall the convention that the
symbol $\osum$ implies a restriction to odd parameters in the summation
and  we note that \eqref{i:2} and \eqref{i:-} are together equivalent
to
 $(s,t)\in \R'(2^\nu \la_1\la_2 B/\max\{a,b\})$ in the notation of \eqref{eq:RBA'}.

\section{Removing the coprimality conditions}\label{sec:1}

The way forward should now be clear. For given values of $a,b$, and
appropriate values of $\nu, \la_1$ and $\la_2$, we must attempt to
produce an asymptotic formula for the number of primitive lattice
points in a complicated region in $\RR^2$. We will do so using
exponential sums. The first step, however,  is to remove
the coprimality conditions that go into the definition of $L(B)$.
Let $\k=(k_1,k_2)$ and $\bla=(\la_1,\la_2)$ such that
$\gcd(k_1k_2\la_1\la_2,ab)=1$ and let $\ell\in \NN$. 
Define the set
\begin{equation}
   \label{eq:lattice-1}
   \sfl(\k,\bla,\ell):=\{ (s,t)\in \ZZ^2: \mbox{
$[k_1\la_1,\ell ]\mid s$, $k_2\la_2 \mid s-at$ and $\ell \mid
   t$}\}.
\end{equation}
Then $\sfl(\k,\bla,\ell) \subseteq \ZZ^2$ is a lattice of
rank $2$, with determinant as in the following result.

\begin{lemma}\label{lem:det}
Let $\k, \bla, \ell$ be as above. Then we have
$$
\det 
\sfl(\k,\bla,\ell)=\frac{k_1k_2\la_1\lambda_2\ell^2}{\gcd(k_1
\lambda_1,\ell) \gcd(k_2\lambda_2,\ell)}. 
$$
\end{lemma}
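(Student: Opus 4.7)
The plan is to compute the determinant as the index $[\ZZ^2:\sfl(\k,\bla,\ell)]$ by exhibiting an explicit Hermite-style basis. Since $\sfl(\k,\bla,\ell)$ has rank two, a basis can be chosen of the form $(N_1,0)$ and $(s_0,N_2)$ with $N_1,N_2>0$ minimal, in which case $\det\sfl(\k,\bla,\ell)=N_1N_2$.

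First I would determine $N_1$: the vector $(N_1,0)$ lies in $\sfl(\k,\bla,\ell)$ iff $[k_1\la_1,\ell]\mid N_1$ and $k_2\la_2\mid N_1$, so the minimal positive value is $N_1=\mathrm{lcm}([k_1\la_1,\ell],k_2\la_2)$. Next I would determine $N_2$: any $(s_0,N_2)\in\sfl(\k,\bla,\ell)$ requires $\ell\mid N_2$, say $N_2=\ell\kappa$, and the existence of $s_0$ satisfying $[k_1\la_1,\ell]\mid s_0$ and $k_2\la_2\mid s_0-a\ell\kappa$ is, via the Chinese Remainder Theorem and the hypothesis $\gcd(k_2\la_2,a)=1$, equivalent to $\gcd([k_1\la_1,\ell],k_2\la_2)\mid \ell\kappa$. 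The minimal admissible $\kappa$ (and hence $N_2$) then follows explicitly from this divisibility.

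The remaining task is to simplify the product $N_1N_2$ to the stated expression $k_1k_2\la_1\la_2\ell^2/\gcd(k_1k_2\la_1\la_2,\ell)$, which I would carry out prime by prime. Writing $\alpha=\nu_p(k_1\la_1)$, $\beta=\nu_p(k_2\la_2)$, $\gamma=\nu_p(\ell)$, the coprimality $\gcd(k_1k_2\la_1\la_2,ab)=1$ ensures that any prime $p\mid a$ has $\alpha=\beta=0$, reducing that case to a trivial verification that the local index is $p^{2\gamma}$. For $p\nmid a$ the local lattice is directly generated by the two vectors above, and a short case analysis on the relative sizes of $\alpha,\beta,\gamma$ yields the valuation $\nu_p(N_1N_2)=\alpha+\beta+2\gamma-\min(\alpha+\beta,\gamma)$, which matches the $p$-adic valuation of the claimed formula.

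I expect the main obstacle to be the bookkeeping in this final simplification, particularly in reconciling local contributions when $\gcd(k_1\la_1,k_2\la_2)>1$: the identity $\gcd(k_1k_2\la_1\la_2,\ell)=\gcd(k_1\la_1,\ell)\gcd(k_2\la_2,\ell)$ needed to reach the compact form on the right-hand side requires essentially that $\gcd(k_1\la_1,k_2\la_2)=1$. In the application this is automatic: $\la_1\mid a^2-b^2$ and $\la_2\mid a^2+b^2$ are both odd with $\gcd(a,b)=1$, forcing $\gcd(\la_1,\la_2)=1$, and the corresponding coprimality for $k_1,k_2$ comes from how this lemma is invoked in the exponential sum analysis that follows.
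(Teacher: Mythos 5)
Your argument is correct, and it reaches the determinant by a different route from the paper. The paper simply observes that every lattice point has $\ell\mid s$ and $\ell\mid t$, substitutes $s=\ell s'$, $t=\ell t'$ to pull out a factor $\ell^2$, and is left with the congruence lattice $\la_1'\mid s'$, $\la_2'\mid s'-at'$ (where $\la_i'=k_i\la_i/\gcd(k_i\la_i,\ell)$), whose index is $\la_1'\la_2'$ because $\gcd(\la_1'\la_2',a)=1$; the stated compact form then follows. Your Hermite-basis computation instead gives closed expressions $N_1=\mathrm{lcm}([k_1\la_1,\ell],k_2\la_2)$ and $N_2=\mathrm{lcm}\bigl(\ell,\gcd([k_1\la_1,\ell],k_2\la_2)\bigr)$, valid with no coprimality assumption between $k_1\la_1$ and $k_2\la_2$, and then reconciles $N_1N_2$ with the displayed formula prime by prime. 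One caution: your intermediate claim that $\nu_p(N_1N_2)=\alpha+\beta+2\gamma-\min(\alpha+\beta,\gamma)$ for every $p\nmid a$ is not true when $\min(\alpha,\beta)>0$ (e.g.\ $\alpha=\beta=\gamma=1$ gives $N_1N_2$ of valuation $2$ against $3$ for the right-hand side), so the statement of the lemma itself tacitly uses $\gcd(k_1\la_1,k_2\la_2)=1$ — exactly the point you flag in your final paragraph. Your resolution of it is right: in every invocation $k_1\la_1\mid a^2-b^2$ and $k_2\la_2\mid a^2+b^2$ are odd, and $\gcd(a^2-b^2,a^2+b^2)\mid 2$ since $\gcd(a,b)=1$, so the required coprimality is automatic (the paper's own proof relies on the same fact without comment). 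Net effect: the paper's rescaling is shorter; your version costs more bookkeeping but computes the determinant in full generality and makes visible precisely where the hypothesis enters.
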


\begin{proof}
Writing $s=\ell s'$
and $t=\ell t'$ it follows that
$\det \sfl=\ell^2 \det \sfl'$,
where $\sfl'$ is the set of $(s',t')\in \ZZ^2$ such that
$\lambda_1'\mid s'$  and $\lambda_2'\mid s'-at'$, with
$\lambda_i'$ equal to $k_i\la_i/\gcd(k_i\la_i, \ell)$. The proof of
the lemma
then follows on noting that $\gcd(\lambda_1'\lambda_2',a)=1$.
\end{proof}

We proceed to remove the coprimality
conditions that appear in \eqref{i:6}.
Thus an application of M\"obius inversion yields
\begin{equation}\label{eq:Lk1k2}
L(B)=\Osum_{k_1\mid (a^2-b^2)/\la_1}
\Osum_{k_2\mid (a^2+b^2)/\la_2}
\mu(k_1)\mu(k_2)
L_{k_1,k_2}(B),
\end{equation}
where now $L_{k_1,k_2}(B)$
is the number of $(s,t)\in \sfl(\k,\bla,1)$ such that
\eqref{i:1}--\eqref{i:-} and \eqref{i:7} hold
in the definition of $L(B)$.

We now  consider the regions \eqref{eq:RBA} and \eqref{eq:RBA'} in
more detail.
Define
\begin{equation}
   \label{eq:SBA}
\mathcal{S}_u:=
   \left\{(s,t)\in \RR\times \RR_{>0}:
0<p_u(s,t),q_u(s,t)\leq 1, ~r_u(s,t)>0
\right\}
\end{equation}
and
\begin{equation}
   \label{eq:SBA'}
\mathcal{S}_u':= \left\{(s,t)\in \mathcal{S}_u:
s(s-t/\sqrt{|1-u^2|})\neq 0, ~
s/t\neq (1-u^2)/(2\sqrt{|1-u^2|})
\right\},
\end{equation}
for any positive $u\neq 1$.
We  observe that 
\begin{equation}\label{eq:train}
(s,t)\in \R'(X) 
\Longleftrightarrow
\Big(\frac{s}{\sqrt{X}}, \frac{\sqrt{|a^2-b^2|}t}{\sqrt{X}}\Big)\in
\mathcal{S}_{b/a}'. 
\end{equation}
We then have the following result.

\begin{lemma}\label{lem:volume}
Let $X>0$ and $u\neq 1$ be positive. Then we have
$$
\vol(\R'(X))=\vol(\R(X))
=\frac{X\vol(\Sab)}{\sqrt{|a^2-b^2|}}.
$$
Furthermore, we have
$$
\R(X)\subseteq [-c\sqrt{X},c\sqrt{X}]\times (0,c\sqrt{X/|a^2-b^2|}],
\quad
\mathcal{S}_u \subseteq [-c,c]\times (0,c],
$$
for an absolute constant $c>0$.
\end{lemma}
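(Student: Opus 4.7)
The equality $\vol(\R'(X)) = \vol(\R(X))$ is immediate since the two regions differ only on a finite union of lines, a set of Lebesgue measure zero in $\RR^2$. For the volume formula itself, my plan is to apply the linear change of variables $s = \sqrt{X}\,s'$, $t = \sqrt{X}\,t'/\sqrt{|a^2-b^2|}$, whose Jacobian determinant is $X/\sqrt{|a^2-b^2|}$. Since $p_u, q_u, r_u$ are homogeneous of degree $2$, the defining inequalities of $\R(X)$ appearing in the second description in \eqref{eq:RBA} transform precisely into those of $\Sab$ in \eqref{eq:SBA}, yielding $\vol(\R(X)) = (X/\sqrt{|a^2-b^2|})\vol(\Sab)$. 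Via this same change of variables, the asserted inclusion $\R(X) \subseteq [-c\sqrt{X}, c\sqrt{X}] \times (0, c\sqrt{X/|a^2-b^2|}\,]$ reduces to proving the uniform inclusion $\mathcal{S}_u \subseteq [-c,c] \times (0,c]$, valid for every positive $u \neq 1$.

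This last inclusion is the heart of the matter, and I would argue it by splitting on the sign of $1-u^2$. When $u > 1$ we have $q_u(s,t) = 2s^2 + t^2$, so the single constraint $0 < q_u(s,t) \leq 1$ already forces $|s| \leq 1/\sqrt{2}$ and $0 < t \leq 1$. The case $0 < u < 1$ is more delicate because $q_u(s,t) = 2s^2 - t^2$ is indefinite, so no single constraint suffices to bound the region. Here my plan is to first observe that $p_u(s,t) > 0$ together with $t > 0$ forces $s > 0$, and then $q_u(s,t) > 0$ forces $s > t/\sqrt{2}$. Substituting this lower bound for $s$ into
\[
p_u(s,t) + q_u(s,t) = -2t^2 + \frac{4st}{\sqrt{1-u^2}} \leq 2
\]
yields $2t^2(\sqrt{2}/\sqrt{1-u^2} - 1) < 2$, whence $t^2 < (\sqrt{2}-1)^{-1}$, a bound independent of $u$. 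An absolute upper bound for $|s|$ then follows from $q_u(s,t) \leq 1$, since this gives $2s^2 \leq 1 + t^2$.

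The main obstacle I anticipate is ensuring that the constant $c$ can be chosen independently of $u$, especially as $u \to 1^-$ where the coefficient $4/\sqrt{1-u^2}$ appearing in $p_u$ blows up. The argument above circumvents this precisely because the lower bound $\sqrt{2}/\sqrt{1-u^2} - 1 \geq \sqrt{2} - 1 > 0$ holds uniformly throughout $0 < u < 1$; in fact the region $\Sab$ shrinks as $u \to 1$, so uniformity costs nothing. Note also that the form $r_u$ plays no role in this bounding argument, consistent with the observation that the discriminant of $r_u$ is negative when $u < 1$, so that $r_u(s,t) > 0$ is automatic in that regime.
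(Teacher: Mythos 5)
Your proposal is correct and follows essentially the same route as the paper: the volume identity is the immediate consequence of homogeneity and the scaling $(s,t)\mapsto(s/\sqrt{X},\sqrt{|a^2-b^2|}\,t/\sqrt{X})$, and the inclusions are reduced to the uniform bound $\mathcal{S}_u\subseteq[-c,c]\times(0,c]$, proved by splitting on $u^2\gtrless 1$ and using $q_u\leq 1$ when $u>1$. Your treatment of the case $0<u<1$, adding $p_u+q_u$ and inserting $s>t/\sqrt{2}$, is a slightly cleaner variant of the paper's two-case analysis on the ratio $t/s$ (with the auxiliary parameter $\eta$), but it is the same underlying argument and the uniformity in $u$ is handled identically.
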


\begin{proof}
The first part of the lemma is self-evident, and so it remains to establish the
bounds on $\R(X)$ and $\mathcal{S}_u$ in the second part.
For this it will clearly suffice to show that
$s\ll 1$ and $0< t\ll 1$ for any $(s,t)\in \mathcal{S}_u$.
Suppose first that $u^2>1$. Then it follows from the inequality
$q_u(s,t)\leq 1$ that $2s^2+t^2\leq 1$, whence $s,t\ll 1$ in this case.
If $u^2<1$ then we have $0<2s^2-t^2\leq 1$ and
$0<-2s^2-t^2+4st/\sqrt{1-u^2}\leq~1$.
In particular it follows that $0<t<\sqrt{2}s$.
Let $\eta>0$. If~$t<s\sqrt{2-\eta}$ then
$1\geq~2s^2-t^2>\eta~s^2 $. Thus $s\leq 1/\sqrt{\eta}$ and
$t\leq \sqrt{2/\eta}$ in this case.
Alternatively, if $s\sqrt{2-\eta}\leq t<\sqrt{2}s$,
then we deduce that
\begin{align*}
1\geq -2s^2-t^2+\frac{4st}{\sqrt{1-u^2}}
&=
s^2\Big(-2-\Big(\frac{t}{s}\Big)^2+\frac{4}{\sqrt{1-u^2}}\Big(\frac{t}{s}\Big)\Big)
\\
&>
4s^2(-1+\sqrt{2-\eta}),
\end{align*}
since $1/\sqrt{1-u^2}> 1$.
Taking $\eta=1/2$ it therefore follows that $s,t \ll 1$ in every case,
which completes the proof of the lemma.
\end{proof}

Recall the definition \eqref{eq:AA} of the set $\A_1$. For any $(a,b)\in
\A_1$ it follows from \eqref{eq:nu} that we are only interested in values of
$$
\nu\leq \max\{1,\nu_2(a^2-b^2)\}\leq Z_2 = \log\log B.
$$
Returning to our expression \eqref{eq:Lk1k2} for $L(B)$, we may now
show that there is a negligible
contribution from large values of $k_1,k_2$
and also from points for which the $2$-adic order of $s$ is large.
Let
\begin{equation}
   \label{eq:K}
K:=(\log B)^{100}
\end{equation}
and let
\begin{equation}
   \label{eq:XX}
X=\frac{2^\nu \la_1\la_2 B}{\max\{a,b\}}.
\end{equation}
We may now establish the following result. 

\begin{lemma}\label{lem:bbq}
We have
$$
\sum_{(a,b)\in \A_1}
\sum_{\nu\leq Z_2}
\Osum_{k_1\lambda_1\mid a^2-b^2}
\Osum_{k_2\lambda_2\mid a^2+b^2}
L_{k_1,k_2}(B) \ll B(\log
B)^{4-2\log 2 +\ve} ,
$$
where the summations are subject to $\max\{k_1,k_2\}> K$ or
$\nu_2(s)> 2Z_2$.
\end{lemma}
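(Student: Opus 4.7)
My plan is to apply Lemma~\ref{lem:lattice} to each $L_{k_1,k_2}(B)$ and then sum carefully, exploiting the constraint $\max\{k_1,k_2\}>K$ in both the volume and the ``$1$'' terms of the lattice-point bound.

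Combining Lemma~\ref{lem:lattice} with Lemma~\ref{lem:det} (which gives $\det\sfl(\k,\bla,1)=k_1k_2\lambda_1\lambda_2$) and Lemma~\ref{lem:volume} (which gives $\vol(\R(X))\ll X/\sqrt{|a^2-b^2|}$ for $X=2^\nu\lambda_1\lambda_2 B/\max\{a,b\}$), one finds
\[
L_{k_1,k_2}(B)\ll 1+\frac{2^\nu B}{k_1k_2\max\{a,b\}\sqrt{|a^2-b^2|}}.
\]
For $(a,b)\in\A_1$ we have $|a-b|\geq\max\{a,b\}/Z_2^2$ and $a+b\asymp\max\{a,b\}$, so $\sqrt{|a^2-b^2|}\gg\max\{a,b\}/Z_2$; together with $\nu\leq Z_2$ and $2^{Z_2}\ll\log B$, this simplifies to
\[
L_{k_1,k_2}(B)\ll 1+\frac{BZ_2\log B}{k_1k_2\max\{a,b\}^2}.
\]

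I would then split $\sum L_{k_1,k_2}(B)$ into the contribution of the volume term and that of the ``$1$'' term. For the volume term, assuming without loss of generality $k_1>K$, the bound $\sum_{k_1>K,\,k_1\mid n}1/k_1\leq\tau(n)/K$, together with elementary divisor-sum estimates for $\lambda_1,\lambda_2,k_2$, reduces matters to controlling a sum of the form $\sum_{(a,b)\in\A_1}\tau(|a^4-b^4|)^{O(1)}/\max\{a,b\}^2$. Applying Lemma~\ref{lem:nair} dyadically over $\max\{a,b\}$ (as in the proof of \eqref{eq:anna-swim}) yields an overall bound $\ll B(\log B)^{O(1)}/K$, which is $\ll B$ provided the absolute constant appearing in $K=(\log B)^{50}$ is taken large enough relative to the constant $c$ in the definition \eqref{eq:AA} of $\A_1$.

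The subtler point is the ``$1$'' contribution, since a naive summation $\sum 1$ over the allowed tuples is $\gg B(\log B)^{O(1)}Z_2$ from divisor bounds alone. The key observation is that $L_{k_1,k_2}(B)\geq 1$ forces the existence of a nonzero $(s,t)\in\sfl(\k,\bla,1)\cap\R'$; since $k_1\lambda_1\mid s$ with $s\neq 0$ and $k_2\lambda_2\mid s-at$ with $s-at\neq 0$, the bounds of Lemma~\ref{lem:volume} yield $k_1\lambda_1\leq c\sqrt X$ and $k_2\lambda_2\leq cZ_2\sqrt X$ (the latter exploiting $a/\sqrt{|a^2-b^2|}\ll Z_2$ in $\A_1$). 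Multiplying these two inequalities gives
\[
k_1k_2\ll Z_2\cdot 2^\nu B/\max\{a,b\}.
\]
I would then interchange the order of summation, summing over $(s,t)$ in the ambient lattice $\sfl(1,1,\bla,1)$ first; the number of contributing $(k_1,k_2,\lambda_1,\lambda_2)$ with $\max\{k_1,k_2\}>K$ is controlled by $\tau(h_1)\tau(h_2)\mathbf{1}[\max(h_1,h_2)>K]$, where $h_1,h_2$ denote the relevant odd gcds of $s$ and $s-at$ against $(a^2-b^2)/\lambda_1$ and $(a^2+b^2)/\lambda_2$. Applying Rankin's trick $\mathbf{1}[h_1>K]\leq h_1/K$, together with the upper bound $h_1\leq\sqrt X/\lambda_1$ from $h_1\lambda_1\mid s$, permits evaluation of the resulting sums via Lemma~\ref{lem:nair}, producing the required $K^{-1}$ saving.

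The main obstacle is precisely this last step: one must ensure that the gain of $1/K=(\log B)^{-50}$ from the constraint $\max\{k_1,k_2\}>K$ is transferred, via Rankin's trick and Lemma~\ref{lem:nair}, to a clean saving that defeats the polylogarithmic loss from the three nested divisor sums (over $\nu$, $(\lambda_1,\lambda_2)$ and $(k_1,k_2)$) and the average order of $\tau^{O(1)}(|a^4-b^4|)$.
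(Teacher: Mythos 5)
Your treatment of the main (volume) term is essentially the paper's: the bound $L_{k_1,k_2}(B)\ll 1+2^{\nu}B/(k_1k_2\max\{a,b\}\sqrt{|a^2-b^2|})$ from Lemmas~\ref{lem:lattice}, \ref{lem:det} and \ref{lem:volume}, followed by the $1/K$ saving coming from $\max\{k_1,k_2\}>K$ and divisor-sum estimates in the spirit of Lemma~\ref{lem:nair} and \eqref{eq:anna-swim}, is exactly what is done there. (A minor point: the exponent $50$ in \eqref{eq:K} is fixed, and what one checks is that it exceeds the absolute polylogarithmic losses; there is no interplay with the constant $c$ of \eqref{eq:AA} at this step.)

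The genuine gap is in your handling of the ``$1$'' contribution, and it stems from a false premise. You claim the naive count of tuples is $\gg B(\log B)^{O(1)}Z_2$, but the definition \eqref{eq:AA} of $\A_1$ contains the constraint $\max\{a,b\}<\sqrt{B}/Z_1^{c}$ with $Z_1=B^{1/\log\log B}$, a super-polylogarithmic truncation. Consequently the total number of admissible tuples is at most $Z_2\sum_{a,b<\sqrt{B}/Z_1^{c}}\tau_3(|a^2-b^2|)\tau_3(a^2+b^2)\ll B(\log B)^{6}\log\log B\,Z_1^{-2c}$ by Lemma~\ref{lem:nair}, which is $o(B)$; this one line is how the paper disposes of the ``$1$'' term, and the restriction $\max\{k_1,k_2\}>K$ is not even needed for it. Your substitute argument --- interchanging the summation to $(s,t)$, bounding the multiplicity by $\tau(h_1)\tau(h_2)$ and applying Rankin's trick $\mathbf{1}[h_1>K]\le h_1/K$ --- is not carried through and faces real obstacles as sketched: after the interchange the region $\R'(X)$ still depends on $\lambda_1\lambda_2$ through $X=2^{\nu}\lambda_1\lambda_2B/\max\{a,b\}$, so the range of $(s,t)$ is not fixed; and the Rankin weight $h_1\le|s|\ll\sqrt{X}$ can be of size a power of $B$, so the gain $1/K=(\log B)^{-50}$ cannot absorb it without a much more careful analysis that you do not supply (indeed you flag this step yourself as the unresolved ``main obstacle''). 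As written the proof is therefore incomplete, although the repair is immediate once the $Z_1^{-2c}$ saving built into $\A_1$ is exploited.
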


\begin{proof}
Let $\sigma\geq 0$ and let $L_{k_1,k_2}(B;\sigma)$ denote the contribution to 
$L_{k_1,k_2}(B)$ from $s$ such that 
$\nu_2(s)\geq \sigma$. 
Let $\k'=(2^\sigma k_1,k_2)$.
Lemma \ref{lem:det} reveals  that 
$\sfl(\k',\bla,1)$ is an integer lattice of rank 
$2$
and determinant $\la_1\la_2k_1k_2=2^\sigma \la_1\la_2k_1k_2$.
Hence it follows from Lemmas \ref{lem:lattice} and \ref{lem:volume} that
\begin{equation}\label{eq:rich}
L_{k_1,k_2}(B;\sigma)
\ll 1+ \frac{\vol\R(X)}{2^\sigma \la_1\la_2k_1k_2}
\ll 1+ \frac{2^{\nu-\sigma} B}{\max\{a,b\}^{3/2}|a-b|^{1/2}k_1k_2},
\end{equation}
where $X$ given by \eqref{eq:XX} and we have taken $|a^2-b^2|\geq \max\{a,b\}|a-b|$.
The contribution from the first term is 
\begin{align*}
\sum_{(a,b)\in \A_1}
\sum_{\nu\leq Z_2}
\Osum_{k_1\lambda_1\mid a^2-b^2}
\Osum_{k_2\lambda_2\mid a^2+b^2} 1
&\ll
\log \log B\sum_{a,b}
\tau_3(|a^4-b^4|)
\ll
\frac{B(\log B)^{6+\ve}}{Z_1^{2c}},
\end{align*}
by Lemma \ref{lem:nair}, which is satisfactory. 

Suppose that we are dealing with the overall contribution
from $\max\{k_1,k_2\}> K$.  Note that
$$
\sum_{\nu \leq Z_2}
2^\nu \Osum_{k_1\lambda_1\mid a^2-b^2}
\Osum_{k_2\lambda_2\mid a^2+b^2} 1\ll \log B 
\Osum_{k\la \mid a^4-b^4}1 \leq \tau_3(|a^4-b^4|) \log B.
$$
We may therefore take $\sigma=0$ 
and  $k_1k_2\geq
\max\{k_1,k_2\}> K$ in the second term of \eqref{eq:rich}, 
together with the lower bound $|a-b|\geq Z_2^{-2}\max\{a,b\}$ from
\eqref{eq:AA}, to obtain the overall contribution
$$
\ll
\frac{B Z_2 \log B}{K}
\sum_{(a,b)\in \A_1}
\frac{\tau_3(|a^4-b^4|)}{\max\{a,b\}^{2}}
\ll  B,
$$
by Lemma \ref{lem:nair}.

Next suppose we are dealing with 
the contribution 
from $\nu_2(s)> 2Z_2$. Taking $\sigma=\lfloor 2Z_2\rfloor$ we 
deduce from \eqref{eq:nu} that
$
\nu=
\nu_2(a^2-b^2)$.
Hence the overall contribution from the second term in \eqref{eq:rich}
is 
\begin{align*}
&\ll \frac{B}{2^{\lfloor 2Z_2 \rfloor}}\sum_{(a,b)\in \A_1}
\Osum_{k_1\lambda_1\mid a^2-b^2}
\Osum_{k_2\lambda_2\mid a^2+b^2} 
\frac{2^{\nu_2(a^2-b^2)} }{\max\{a,b\}^{3/2}|a-b|^{1/2}k_1k_2}\\
&\ll \frac{B(\log \log B)^2}{(\log B)^{\log 2}}\sum_{(a,b)\in \A_1}
\Osum_{\lambda_1\mid a^2-b^2}
\Osum_{\lambda_2\mid a^2+b^2} 
\frac{1 }{\max\{a,b\}^{3/2}|a-b|^{1/2}}\\
&\ll \frac{B(\log \log B)^2}{(\log B)^{\log 2}}\sum_{(a,b)\in \A_1}
\frac{\tau(|a^4-b^4|) }{\max\{a,b\}^{3/2}|a-b|^{1/2}},
\end{align*}
since $\sum_{k\mid n}1/k\ll \log\log n$ and $\nu_2(a^2-b^2)\leq Z_2$
in $\A_1$. 
Applying \eqref{eq:anna-swim} we see that this is 
$O(B(\log B)^{4-\log 2+\ve})$, which is acceptable
and so completes the proof of the lemma.
\end{proof}

In view of this result we may now proceed under the assumption that
$k_1,k_2\leq~K$ in \eqref{eq:Lk1k2}, and we may freely replace 
$L_{k_1,k_2}(B)$ by $\widetilde{L}_{k_1,k_2}(B)$, which is 
the same but with the additional restriction that $\nu_2(s)\leq~2Z_2$.
We now apply M\"obius inversion in 
\eqref{eq:Lk1k2}, allowing us to take
\begin{equation}\label{eq:Lk1k2l}
L(B)=\Osum_{\substack{k_1\mid (a^2-b^2)/\la_1\\ k_1\leq K}}
\Osum_{\substack{k_2\mid (a^2+b^2)/\la_2\\ k_2\leq K}}
\Osum_{\ell\in \NN} \mu(k_1)\mu(k_2)\mu(\ell)
\widetilde{L}_{k_1,k_2,\ell}(B),
\end{equation}
with acceptable error. 
Here $K$ is given by \eqref{eq:K} and
$\widetilde{L}_{k_1,k_2,\ell}(B)$
denotes the number of $(s,t)\in~\sfl(\k,\bla,\ell)$ such that
\eqref{i:2}, \eqref{i:-}  and \eqref{i:7} hold
in the definition of $L(B)$, with $2\nmid \gcd(s,t)$ and $\nu_2(s)\leq
2Z_2$.  Note that it will facilitate our ensuing investigation to restrict the
summation to odd values of $\ell$ here.

When it comes to estimating $\widetilde{L}_{k_1,k_2,\ell}(B)$ asymptotically as
$B\rightarrow \infty$ we will encounter problems
when $\ell$ is large.
For given $T>0$ and $Y\geq 1$, we now let $L^\dagger_{k_1,k_2,\ell}(Y,T)$ denote
the number of 
$$
(s,t)\in \sfl(\k,\bla,\ell)\cap
\R^\dagger(2^\nu \la_1\la_2 Y/\max\{a,b\})
$$
for which $\gcd(s,t)>T$, where
$\R^\dagger(X)$ is given by \eqref{eq:R-dag}.
Note that $\widetilde{L}_{k_1,k_2,\ell}(B) \leq 
L^\dagger_{k_1,k_2,\ell}(B,1/2)$ in the above notation. 
We have the following key result.

\begin{lemma}\label{lem:large-gcd}
Suppose $T>0$ and $A,Y\geq 1$, with $A\leq \sqrt{B}/Z_1^c$ and $Y\leq B^5.$ Then we have
$$
\sum_{\substack{
(a,b)\in \NN^2\\
\cp{a}{b}, ab\neq 1\\
\nu_2(a^2-b^2)\leq Z_2\\
\max\{a,b\}\leq A}}
\sum_{\nu\leq Z_2}
\Osum_{\substack{k_1\lambda_1\mid a^2-b^2\\ k_1\leq K}}
\Osum_{\substack{k_2\lambda_2\mid a^2+b^2\\ k_2\leq K}}
\Osum_{\ell \in \NN}
L^\dagger_{k_1,k_2, \ell}(Y;T)
\ll
A^2Z_1^{O(1)}\lfloor T \rfloor+ \frac{YZ_1^{O(1)}}{T}.
$$
\end{lemma}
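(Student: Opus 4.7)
The strategy is to split $L^\dagger_{k_1,k_2,\ell}(Y,T)$ according to $d:=\gcd(s,t)>T$, writing $(s,t)=d(u,v)$ with $\gcd(u,v)=1$. The pair $(u,v)$ then lies in the sublattice $\sfl^{(d)}:=\{(u,v)\in\ZZ^2:(du,dv)\in\sfl(\k,\bla,\ell)\}$ intersected with $\tfrac{1}{d}\R^\dagger$. Since $st(s-at)\ne 0$ and Lemma~\ref{lem:volume} confines $\R^\dagger$ in a box of dimensions $\sqrt{X}\times\sqrt{X/|a^2-b^2|}$ with $X:=2^\nu\la_1\la_2 Y/\max\{a,b\}$, the constraint $v\geq 1$ limits $d$ to the range $(T,D_{\max}]$ with $D_{\max}:=c\sqrt{X/|a^2-b^2|}$. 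Using the coprimalities $\gcd(\la_1,\la_2)=\gcd(k_1\la_1,k_2\la_2)=\gcd(a,k_1k_2\la_1\la_2)=1$, all of which follow from $\gcd(a,b)=1$, one computes $\det\sfl^{(d)}$ explicitly by an argument parallel to Lemma~\ref{lem:det}, in terms of the various gcd's of $d$ with the lattice parameters.

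Applying Lemma~\ref{lem:lattice} with the coprimality condition yields
$$
L^\dagger_{k_1,k_2,\ell}(Y,T)\ll\sum_{T<d\leq D_{\max}}\biggl(1+\frac{\vol(\R^\dagger)}{d^2\det\sfl^{(d)}}\biggr).
$$
The determinant formula implies $d^2\det\sfl^{(d)}\geq\det\sfl\cdot\gcd(d,N)^{-O(1)}$ where $N:=k_1k_2\la_1\la_2\ell$. Summing the main term over $d>T$ via the elementary identity $\sum_{d>T}\gcd(d,m)/d^2\ll\tau(m)/T$ produces the $1/T$ saving. Subsequent summation over the tuples $(a,b,\nu,k_1,k_2,\la_1,\la_2,\ell)$, with the $\ell$-sum collapsed by $\sum_\ell \gcd(k_1\la_1,\ell)\gcd(k_2\la_2,\ell)/\ell^2\ll\phid(k_1k_2\la_1\la_2)^{O(1)}$ and the $(a,b)$-sum handled by the divisor-sum techniques behind Lemmas~\ref{lem:nair} and~\ref{lem:ab-small} (notably the substitution $u=a+b$, $v=a-b$), delivers the $YZ_1^{O(1)}/T$ contribution.

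For the `$1$' contribution, observe that it is non-zero only when $\sfl^{(d)}$ contains a non-zero vector in $\tfrac{1}{d}\R^\dagger$; the divisibility $\ell/\gcd(d,\ell)\mid v$ together with $|v|\leq c\sqrt{X/|a^2-b^2|}/d$ forces the arithmetic constraint $\mathrm{lcm}(d,\ell)\leq c\sqrt{X/|a^2-b^2|}$, tightly linking $d$ and $\ell$. Since $d>T$ with $d\in\ZZ$ means $d\geq[T]+1$, a combinatorial count of admissible pairs $(d,\ell)$ subject to this lcm constraint, combined with the divisor-sum bounds of Lemma~\ref{lem:nair} applied to the sums over $(k_1,k_2,\la_1,\la_2)$, produces the remaining contribution of $A^2Z_1^{O(1)}[T]$.

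The principal technical obstacles are: (i) obtaining a sufficiently precise formula for $\det\sfl^{(d)}$ to preserve the $1/T$ saving when $d$ shares large common factors with the lattice parameters; and (ii) extracting the precise $[T]$ factor in the `$1$' contribution in place of a weaker $D_{\max}$ factor, which hinges on the arithmetic constraint $\mathrm{lcm}(d,\ell)\leq D_{\max}$ to restrict $\ell$ tightly as a function of $d$.
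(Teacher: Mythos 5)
Your reduction of the main term (volume over determinant) can probably be salvaged, although not quite as written: the determinant of your sublattice $\sfl^{(d)}$ loses a factor of roughly $\gcd(\la_1'\la_2',d)\gcd(\ell,d)^{2}$, and with the square on $\gcd(\ell,d)$ the identity $\sum_{d>T}\gcd(d,m)/d^{2}\ll\tau(m)/T$ no longer gives a $1/T$ saving by itself (take $d$ a multiple of a divisor $f\mid\ell$ with $f>T$: each such $f$ contributes $\asymp 1$); one must exploit the subsequent summation over $\ell$, where $f>T$ forces $\ell>T$ and the $\ell^{-2}$ decay restores the saving. The genuine gap, however, is in your treatment of the ``$+1$'' terms of Lemma \ref{lem:lattice}. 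Your only constraint tying $d$ and $\ell$ is $\mathrm{lcm}(d,\ell)\le c\sqrt{X/|a^2-b^2|}=:D'$, and the number of pairs $(d,\ell)$ with $\mathrm{lcm}(d,\ell)\le D'$ is of order $D'(\log D')^{2}$, not of order $[T]$. Since $X=2^{\nu}\la_1\la_2Y/\max\{a,b\}$ and $\la_1\la_2$ can be as large as $|a^4-b^4|$, one has $D'\approx\sqrt{\max\{a,b\}\,Y}$, and summing this over $(a,b)$ with $\max\{a,b\}\le A$ gives roughly $A^{5/2}Y^{1/2}$ (e.g.\ $B^{7/4}$ when $A\approx\sqrt B$, $Y\approx B$), vastly exceeding $A^2Z_1^{O(1)}[T]+YZ_1^{O(1)}/T$; nor can the factor $[T]$, which must vanish for $T<1$, emerge from such a count. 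Trying to refine the count by recording which cells actually contain a lattice point just brings you back to counting the points themselves, so pure lattice-point bookkeeping cannot close this case.

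The paper avoids this by converting the rescaled lattice points back into rational points on the conic: after writing $(s,t)=L(s',t')$ it observes that $\gcd(Q_1(s',t'),Q_2(s',t'))\ge\la_1\la_2/M$ with $M=\gcd(\la_1\la_2,L)$, so (by the parametrisation underlying Lemma \ref{lem:para-1}) the count is bounded by $\widehat M_{a,b}(Y')$ with $Y'\approx Y\log B/(M L'^2)$, and the number of admissible $(k_1,k_2,\la_1,\la_2,M,\ell)$ is only $Z_1^{O(1)}$. It then splits into three regimes: $\max\{a,b\}\le\sqrt{Y'}$, handled by Lemma \ref{lem:ab-small} and summation over $L'>T/M$ (giving $YZ_1^{O(1)}/T$); $\max\{a,b\}>\sqrt{Y'}$ with $L'\le T$, handled by the discriminant-sensitive conic bound \eqref{majMab} from Lemma \ref{lem:bhb}, whose ``$\tau\cdot 1$'' term, multiplied by the at most $[T]$ values of $L'$, is the true source of $A^2Z_1^{O(1)}[T]$; and $\max\{a,b\}>\sqrt{Y'}$ with $L'>T$, where neither device works and the proof invokes the symmetry $(x,y,z)\in C_{a,b}\Leftrightarrow(b,a,z)\in C_{y,x}$ to swap the roles of $(a,b)$ and $(x,y)$ and reapply Lemma \ref{lem:ab-small}. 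These three ingredients — the return to the conic count $\widehat M_{a,b}$, the uniform bound of Lemma \ref{lem:bhb}, and the $(a,b)\leftrightarrow(x,y)$ symmetry — are absent from your proposal, and without them the small-volume contribution is not controlled.
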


\begin{proof}
The idea is to reintroduce
a coprimality condition on the $(s,t)$ that are to be counted.
We have 
\begin{align*}
L^\dagger_{k_1,k_2, \ell}(Y;T)
= \sum_{L>T}
\#\left\{
(s,t)\in \sfl(\k,\bla,\ell)\cap\R^\dagger\Big(\frac{2^\nu \la_1\la_2
  Y}{\max\{a,b\}}\Big) :
\gcd(s,t)=L
\right\}.
\end{align*}
Let
\begin{equation}
   \label{eq:ghost'}
\la_1':=k_1\la_1, \quad \la_2':=k_2\la_2
\end{equation}
and
\begin{equation}
   \label{eq:ghost}
\lambda_1'':=\frac{\la_1'}{\gcd(\la_1',L)}, \quad 
\lambda_2'':=\frac{\la_2'}{\gcd(\la_2',L)}.
\end{equation}
Note that the summand is zero unless $\ell \mid L$.
Making the change of variables
$(s,t)=L(s',t')$ with $\cp{s'}{t'}$,  we conclude that the
summand is bounded by the number of coprime vectors $(s',t')\in \ZZ^2$
such that
\begin{equation}
   \label{eq:lai'}
\la_1'' \mid s', \quad
\la_2'' \mid s'-t'a,
\end{equation}
with $(s',t')\in \R^\dagger(2^\nu \la_1\la_2 Y/(L^2\max\{a,b\}))$.

Recall the definition \eqref{eq:Qi} 
of $Q_1$ and $Q_2$ and let 
$\lambda^+:=\gcd(Q_1(s',t'),Q_2(s',t'))$.
Since $\cp{s'}{t'}$ it follows that
\begin{align*}
\lambda^+&=\gcd(4s'(s'-t'a),{t'}^2(a^2-b^2)-2{s'}^2)
\geq \gcd(s',a^2-b^2)\gcd(s'-t'a,a^2+b^2).
\end{align*}
In view of the fact that $\cp{\la_1''}{\la_2''}$ and \eqref{eq:lai'}
holds, we obtain $\lambda^+\geq \la_1\la_2/M$,
with $M:=\gcd(\la_1\la_2,L)$. In particular  $M\mid a^4-b^4$.
It now follows that
there exists an absolute constant $c>0$ such that
$$
0< -\frac{\max\{a,b\}Q_j(s',t')}{\la^+}\leq c
\frac{2^{Z_2 -1} M Y}{L^2}\leq
c \frac{ M Y\log B}{L^2},
$$
for $j=1,2$. Writing $L=ML'$, we deduce that
\begin{equation}
   \label{eq:new-height}
(s',t')\in \R^\dagger\left(\frac{\la^+ Y'}{\max\{a,b\}}\right),
\quad Y':=c \frac{  Y\log B}{M{L'}^2}.
\end{equation}
Since $\la^+\mid Q_j(s',t')$ we note that 
$$
\max\{a,b\}\leq \max\{a,b\} \cdot \frac{|Q_j(s',t')|}{\la^+}
\leq 
c \frac{ M Y\log B}{L^2}= cY'.
$$

Let us write $E(Y)$ for the term that is to be estimated in the statement of
the lemma. We will employ the estimate $\tau(n)\leq
Z_1^{O(1)}$, valid for any non-zero $n\leq B^c$.
Hence in $E(Y)$ there are at most
$\tau_3(|a^2-b^2|)\tau_3(a^2+b^2)\tau(|a^4-b^4|)\leq Z_1^{O(1)}$  possible values of
$k_1,k_2, \la_1, \la_2$ and $M$. Furthermore there are at most
$\tau(L) \leq Z_1^{O(1)}$   values of
$\ell$ dividing $L$.
On noting that the summation over $\nu$ contributes~$O(Z_1)$, we
conclude that 
$$
E(Y)
\ll Z_1^{O(1)}
\sum_{\substack{a,b<A\\
\gcd(a,b)=1\\ ab\neq 1}}
\max_{M\ll A^4}
  \sum_{\substack{L'> T/M}}
\#\left\{
(s',t')\in \ZZ^2: \begin{array}{l}
\cp{s'}{t'},\\
\mbox{\eqref{eq:new-height} holds}
\end{array}
\right\}.
$$
We would now like to compare the inner cardinality with 
$\widehat{M}_{a,b}(Y')$, 
as given by \eqref{eq:hatM}.  But this follows immediately from 
the bijection described in the
proof of Lemma~\ref{lem:para-1} and the observation that 
$s'(s'-t'a)\neq 0$ when 
$(s',t')\in \R^\dagger(\la^+ Y'/\max\{a,b\})$, so that 
$xy\neq 1$ as required for 
 $\widehat{M}_{a,b}(Y')$.
In this way we may approximate the summand by 
$\widehat{M}_{a,b}(Y')+O(1)\ll \widehat{M}_{a,b}(Y')$, since
$\widehat{M}_{a,b}(Y')\geq 1$ for $Y'\gg \max\{a,b\}$,
whence
\begin{equation}
\label{eq:2lines}
E(Y)
\ll Z_1^{O(1)}
\sum_{\substack{a,b<A\\
\gcd(a,b)=1\\ ab\neq 1}}
\max_{M\ll A^4}
  \sum_{\substack{L'> T/M}}
\widehat{M}_{a,b}(Y').
\end{equation}

We write $E_1(Y)$ (resp. $E_2(Y)$)  for the overall
contribution to the right hand side from $a,b,L'$ such that
$\max\{a,b\}\leq \sqrt{Y'}$ (resp. $\max\{a,b\}> \sqrt{Y'}$).
To begin with it follows from Lemma \ref{lem:ab-small} 
that
$$
E_1(Y) \ll Z_1^{O(1)}
\max_{M}\sum_{L'>T/M} Y'(\log B)^4 \ll \frac{YZ_1^{O(1)}}{T}.
$$
It remains to estimate $E_2(Y)$. We return to \eqref{eq:2lines}, now
with $\sqrt{Y'}<\max\{a,b\}<A$ and $ab\neq 1$ in the
summation over $a,b$. 
We will write $E_{2,1}(Y)$ for the contribution to the right hand side
from values of $L'\leq T$, and
$E_{2,2}(Y)$ for the contribution from values of $L'> T$.
Beginning with the former,
we note that $E_{2,1}(Y)=0$ if $T<1$. If on the other hand $T\geq 1$ then 
an application of  \eqref{majMab}
gives
\begin{align*}
E_{2,1}(Y)
&\ll 
Z_1^{O(1)} \max_{M} \sum_{T/M<L'\leq T} \sum_{a,b<A}\left(1+ \frac{Y'}{\max\{a,b\}^{5/3}|a-b|^{1/3}} \right)\\
&\ll
A^2Z_1^{O(1)}T+ \frac{YZ_1^{O(1)}}{T}
\end{align*}
This is satisfactory for the lemma.

Finally we must deal with
$E_{2,2}(Y)$.  For this we will reverse the roles of the variables
$a,b$ and $x,y$ in $\sum_{a,b}\widehat{M}_{a,b}(Y')$.  We have
\begin{align*}
E_{2,2}(Y)&\ll Z_1^{O(1)}
\max_M \sum_{L'>T}\#
\left\{
a,b,x,y,z : \begin{array}{l}
\gcd(a,b)=\gcd(x,y)=1, \\
ab, xy \neq 1,\\
\sqrt{Y'}<\max\{a,b\}<A,\\
(a^2-b^2)x^2+(a^2+b^2)y^2=2z^2,\\
\max\{a,b\}\max\{x,y\}\leq Y'
\end{array}
\right\},
\end{align*}
On writing
$Y'':=cY(\log B)/{L'}^2$, it now follows from Lemma \ref{lem:ab-small}  
that
\begin{align*}
E_{2,2}(Y)\ll Z_1^{O(1)}\max_M
\sum_{L'>T}
\sum_{\substack{x,y\leq \sqrt{Y''}\\\cp{x}{y}\\xy\neq 1}}
M_{y,x}(Y'') 
\ll 
YZ_1^{O(1)} 
\sum_{L'>T}\frac{1}{{L'}^2}
\ll \frac{YZ_1^{O(1)}}{T}.
\end{align*}
This is satisfactory and therefore completes the  proof of the lemma.
\end{proof}

We would now like to eliminate the contribution to
\eqref{eq:Lk1k2l} from large $\ell$.
Taking 
$
Y=B, A=Z_1^{-c}\sqrt{B}$ and $T=Z_1^{c/10}$ 
in Lemma \ref{lem:large-gcd} we deduce that the
overall contribution from $\ell>Z_1^{c/10}$ is
$
\ll B Z_1^{O(1)}/Z_1^{c/10},
$
which is satisfactory when $c$ is taken to be sufficiently large.
Drawing this observation
together with \eqref{eq:ruff}, our
argument so far has established the following  result.

\begin{lemma}\label{lem:small_kl}
We have
\begin{multline*}
N_1(B)=2
\sum_{(a,b)\in \A_1} \sum_{\nu\leq \max\{1,\nu_2(a^2-b^2)\}}
\Osum_{\substack{k_1\lambda_1\mid a^2-b^2\\k_1\leq
   K}}
\Osum_{\substack{k_2\lambda_2\mid a^2+b^2\\ k_2\leq K}}\\
\times \Osum_{\ell \leq  Z_1^{c/10}}
\mu(k_1)\mu(k_2)\mu(\ell)
\widetilde{L}_{k_1,k_2, \ell}(B)
+O\Big(\frac{B(\log B)^4}{\log\log B}\Big),
\end{multline*}
where $\A_1$ is given by \eqref{eq:AA} and $K$ by \eqref{eq:K}.
\end{lemma}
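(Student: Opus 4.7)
The proof is a synthesis of the reductions established earlier in the section, rather than a new computation. The plan is to chain together Lemma~\ref{lem:red-3}, the identity~\eqref{eq:ruff} for $M_{a,b}(B)$, and the successive Möbius inversions~\eqref{eq:Lk1k2} and~\eqref{eq:Lk1k2l}, verifying at each stage that the accumulated error fits inside $O(B(\log B)^4/\log\log B)$.

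First I would substitute \eqref{eq:ruff} into Lemma~\ref{lem:red-3}. The additive $O(1)$ error per pair $(a,b)\in \A_1$ sums to $O(B)$, which is absorbed. The range of $\nu$ in~\eqref{eq:ruff} is effectively restricted to $\nu\leq \max\{1,\nu_2(a^2-b^2)\}$ by the formula~\eqref{eq:nu} for the $2$-adic exponent, and since $(a,b)\in \A_1$ forces $\nu_2(a^2-b^2)\leq Z_2$ this is a finite range matching the one in the statement. Next, Möbius inversion on the two coprimality conditions in~\eqref{i:6} yields the expansion~\eqref{eq:Lk1k2}. The truncations $k_1,k_2\leq K=(\log B)^{50}$ cost only $O(B)$ in total by the (unnumbered) lemma immediately preceding~\eqref{eq:Lk1k2l}, whose proof combined Lemma~\ref{lem:lattice} on lattice point counts, Lemma~\ref{lem:volume} for the volume of $\R(X)$, and Lemma~\ref{lem:nair} to bound the resulting divisor sums. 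Replacing $L_{k_1,k_2}(B)$ by $\widetilde{L}_{k_1,k_2}(B)$, i.e.\ restricting to $\nu_2(s)\leq 2Z_2$, is also acceptable: one mimics the proof of the same lemma, since large $\nu_2(s)$ through~\eqref{eq:nu} forces a correspondingly large power of $2$ dividing $a^2-b^2$ or $s$, and such configurations are rare.

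A further Möbius inversion, now on the coprimality condition $\gcd(s,t)=1$, introduces the sum over odd $\ell$ appearing in~\eqref{eq:Lk1k2l}. The final step is to truncate $\ell\leq Z_1^{c/10}$, and this is the only place where a genuinely new estimate is required. Applying Lemma~\ref{lem:large-gcd} with $Y=B$, $A=Z_1^{-c}\sqrt{B}$ and $T=Z_1^{c/10}$, as indicated in the paragraph just before the statement, bounds the tail by $BZ_1^{O(1)}/Z_1^{c/10}$, which is $o\bigl(B(\log B)^4/\log\log B\bigr)$ provided the exponent $c$ in the definition~\eqref{eq:AA} of $\A_1$ (and hence in the truncation threshold) has been fixed sufficiently large.

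No individual step presents a real obstacle here: all the analytic work has already been performed in Lemmas~\ref{lem:ab-small}, \ref{lem:red-3} and~\ref{lem:large-gcd}, and the task at hand is essentially a bookkeeping exercise that assembles the pieces and confirms their error terms are compatible. The subtle point — and the only step I would double-check carefully — is that the replacement $L_{k_1,k_2}\to\widetilde{L}_{k_1,k_2}$ really can be carried out with acceptable loss, since it does not appear in any of the earlier lemmas as a stand-alone statement; I would handle it by inserting a dyadic decomposition on $\nu_2(s)$ and reusing the proof of the lemma preceding~\eqref{eq:Lk1k2l}, with the extra saving coming from the smallness of the set of $(s,t)$ with $\nu_2(s)$ exceeding $2Z_2=2\log\log B$.
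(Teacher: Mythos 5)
Your proposal is correct and follows essentially the same route as the paper: Lemma~\ref{lem:red-3} combined with \eqref{eq:ruff}, the M\"obius inversions \eqref{eq:Lk1k2} and \eqref{eq:Lk1k2l}, the unnumbered lemma truncating $k_1,k_2\leq K$ (whose proof is, as you say, easily adapted to impose $\nu_2(s)\leq 2Z_2$ via the enlarged lattice determinant), and Lemma~\ref{lem:large-gcd} with $Y=B$, $A=Z_1^{-c}\sqrt{B}$, $T=Z_1^{c/10}$ to discard $\ell>Z_1^{c/10}$. There is nothing to correct.
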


\section{Lattice point counting}\label{s:asy}

Our task in this section is to set the scene for an asymptotic formula for
the quantity $\widetilde{L}_{k_1,k_2, \ell}(B)$.
Let $\sfl=\sfl(\k,\bla,\ell)\subset \ZZ^2$ be the 
lattice defined in \eqref{eq:lattice-1}.
On writing
\begin{equation}
   \label{eq:Rnu}
\R_\nu':=\R'\Big(\frac{2^\nu \la_1\la_2 B}{\max\{a,b\}}\Big),
\end{equation}
where $\R'(X)$ is defined by \eqref{eq:RBA'}, it follows from
the previous section that
\begin{equation}\label{eq:Tuesday}
\widetilde{L}_{k_1,k_2,\ell}(B)=\#\left\{
(s,t)\in \sfl\cap \R_\nu':
\mbox{\eqref{eq:nu} holds},~ 2\nmid \gcd(s,t),~\nu_2(s)\leq 2Z_2
\right\}.
\end{equation}

We now come to a rather delicate feature of the proof.
Ignoring the supplementary $2$-adic conditions in our new expression
for $\widetilde{L}_{k_1,k_2,\ell}(B)$,
the obvious next step would be to try and approximate this cardinality
with something like the volume of the region in question, divided by the
determinant of $\sfl$.
Ignoring also the contribution from the error term in
this approximation, one would then be led to sum this quantity over all of the
remaining parameters.   An absolutely crucial observation here
is the following: for some ranges of the parameters $\la_1,\la_2$
we will have $\widetilde{L}_{k_1,k_2, \ell}(B)=0$ in Lemma
\ref{lem:small_kl}, even if the approximation
$\vol(\R_\nu')/\det \sfl$  is non-zero. Thus a further
reduction on the set of allowable parameters $\la_1,\la_2$
is necessary.

For any $R>0$ and $(a,b)\in \A_1$, define the
region
\begin{equation}
   \label{eq:VV}
   V_{a,b}(R):=\left\{
(t_1,t_2)\in \RR_{\geq 1}^2:
\begin{array}{l}
\max\{a,b\}t_1\leq Rt_2, \\
\max\{a,b\}t_2\leq Rt_1, \\
\max\{a,b\}^3/R\leq t_1t_2,\\
t_1t_2 \leq \max\{a,b\}R
\end{array}
\right\}.
\end{equation}
We will show how the summation over $\la_1,\la_2$ is necessarily
restricted to this set for a suitable choice of $R$.

It follows from the definition \eqref{eq:AA} 
of $\A_1$ that $\min\{a,b,|a-b|\}\geq
\max\{a,b\}/Z_2^2$. Taken together with Lemma \ref{lem:volume},
the fact that we are interested in integers $(s,t)\in \sfl\cap \R_\nu'$
therefore implies that
$$
1\leq |s| \leq c\sqrt{X}, \quad 1\leq t
\leq c\frac{\sqrt{X}Z_2}{\max\{a,b\}}, \quad
1\leq |s-at|\leq c\sqrt{X}Z_2,
$$
with $X$ given by \eqref{eq:XX}. The middle inequality here implies that
$$
\max\{a,b\}^3 \leq c 2^\nu BZ_2^2\la_1\la_2,
$$
in which we recall our convention that $c$ is used to denote a generic
absolute positive constant.
But we also know that $\la_1\mid s$ and $\la_2\mid s-at$, so that we
may take $|s|\geq \la_1$ and $|s-at|\geq \la_2$ in the first and third
inequality, giving
$$
\max\{a,b\}\la_1 \leq c 2^\nu B\la_2, \quad
\max\{a,b\}\la_2 \leq c 2^\nu BZ_2^2\la_1.
$$

The final inequality that we seek to establish
is
\begin{equation}
   \label{eq:germ}
\la_1\la_2 \leq c \max\{a,b\} 2^\nu BZ_2^6.
\end{equation}
This is rather more subtle and arises
from an inherent symmetry between $\la_1,\la_2$ and~$\mu_1,\mu_2$,
where the latter are non-zero integers such that
\begin{equation}
   \label{eq:chick}
\la_1\mu_1=a^2-b^2, \quad \la_2\mu_2=a^2+b^2.
\end{equation}
In particular all of $\la_1,\la_2,\mu_1,\mu_2$ are coprime to $a$ and
$b$.  It follows
from the divisibility information on $s$ and $s-at$
that there exist non-zero integers $\sigma, \tau$ such that
$s=\la_1 \sigma$ and $s-at=\la_2\tau$.
The height conditions for $s, s-at$
imply that
\begin{align*}
1&\leq \sigma^2 \leq c\frac{2^\nu B\la_2}{\max\{a,b\}\la_1}
=c\frac{2^\nu
   B(a^2+b^2)|\mu_1|}{\max\{a,b\}|a^2-b^2|\mu_2}
\leq c\frac{2^\nu
   BZ_2^2|\mu_1|}{\max\{a,b\}\mu_2},\\
1&\leq \tau^2 \leq c\frac{2^\nu BZ_2^2\la_1}{\max\{a,b\}\la_2}=c\frac{2^\nu
   BZ_2^2|a^2-b^2|\mu_2}{\max\{a,b\}(a^2+b^2)|\mu_1|}
\leq c
\frac{2^\nu
   BZ_2^2\mu_2}{\max\{a,b\}|\mu_1|}.
\end{align*}
Furthermore, we have
\begin{equation}
   \label{eq:spain}
\mu_1\mu_2at=\mu_1\mu_2(\la_1\sigma-\la_2\tau)
=\mu_2(a^2-b^2)\sigma-\mu_1(a^2+b^2)\tau.
\end{equation}
In particular 
\begin{equation}
   \label{eq:trans_cong}
   \mu_2\sigma+\mu_1\tau\equiv 0 \pmod{a},
\end{equation}
since $a,b$ are coprime.
Suppose that 
$\mu_2\sigma+\mu_1\tau=0$. Then since $\gcd(\mu_1,\mu_2)=2^i$
for $i\in \{0,1\}$, it follows that 
$\sigma=2^{-i}\mu_1x$ and $\tau=-2^{-i}\mu_2 x$ for a non-zero
integer $x$. Using \eqref{eq:spain} we easily deduce that
$$
\frac{s}{t}=\frac{a^2-b^2}{2a},
$$
which contradicts the hypotheses in the definition of
\eqref{eq:RBA'}.  
Hence $\mu_2\sigma+\mu_1\tau\neq 0$ in~\eqref{eq:trans_cong}, giving the further
inequality
$$
\frac{\max\{a,b\}}{Z_2^2}\leq a\leq |\mu_2\sigma|+|\mu_1\tau|
\leq c \sqrt{\frac{2^\nu
   BZ_2^2|\mu_1|\mu_2}{\max\{a,b\}}},
$$
whence
$$\frac{\max\{a,b\}^3}{Z_2^6}\leq c 2^\nu B |\mu_1|\mu_2=
c 2^\nu B \frac{|a^4-b^4|}{\la_1\la_2}\leq
c 2^\nu B \frac{\max\{a,b\}^4}{\la_1\la_2}.
$$
This therefore implies \eqref{eq:germ}.

Bringing our argument together we have therefore shown that
the summation over $\la_1,\la_2$ in Lemma  \ref{lem:small_kl} is
subject to
$
(\la_1,\la_2)\in V_{a,b}(c 2^\nu BZ_2^6).
$
In fact, in view of the bounds $k_1,k_2\leq K$, it is trivial to
see that
$$
(k_1\la_1,k_2\la_2)\in V_{a,b}(c BK^3).
$$
Here, using the definitions of $K$ and $Z_2$, we have been able
to replace $2^\nu Z_2^6$ by an additional factor of $K$.
The following result refines this  somewhat.

\begin{lemma}\label{lem:final_prelim}
We have
\begin{multline*}
N_1(B)=2
\sum_{(a,b)\in \A_1} \sum_{\nu\leq \max\{1,\nu_2(a^2-b^2)\}}
\Osum_{\substack{k_1\lambda_1\mid a^2-b^2\\k_1\leq
   K}}
\Osum_{\substack{k_2\lambda_2\mid a^2+b^2\\
k_2\leq K}}\chi(k_1\la_1,k_2\la_2; B/K)\\
\times \Osum_{\ell \leq  Z_1^{c/10}}
\mu(k_1)\mu(k_2)\mu(\ell)
\widetilde{L}_{k_1,k_2, \ell}(B)
+O\Big(\frac{B(\log B)^4}{\log\log B}\Big),
\end{multline*}
where $\A_1$ is given by \eqref{eq:AA}, $K$ by
\eqref{eq:K} and if $V_{a,b}(R)$ is given by \eqref{eq:VV} then 
\begin{equation}
  \label{eq:chi}
\chi(t_1,t_2;R)=
\chi_{a,b}(t_1,t_2;R):=
\begin{cases}
1, & \mbox{if $(t_1,t_2)\in V_{a,b}(R)$,}\\
0, & \mbox{otherwise}.
\end{cases}
\end{equation}
\end{lemma}

\begin{proof}
We have already shown that the
lemma is true with $\chi(k_1\la_1,k_2\la_2;cBK^3)$
in place of $\chi(k_1\la_1,k_2\la_2;B/K)$.
To establish the lemma it suffices to estimate the overall
contribution to the main term from values of $k_1,k_2,\la_1,\la_2$ for
which
$$
(k_1\la_1,k_2\la_2)\in
V_{a,b}(cBK^3)\setminus V_{a,b}(B/K),
$$
in the notation of \eqref{eq:VV}.  This forces $\la_1,\la_2$ to satisfy one
of four further inequalities. Let us show how to handle the
contribution corresponding to $\la_1,\la_2$
satisfying
\begin{equation}
   \label{eq:hunt}
\frac{Bk_1\la_1 }{k_2 K\max\{a,b\}}
<\la_2\leq \frac{cBK^3k_1\la_1}{k_2\max\{a,b\}},
\end{equation}
the remaining cases being handled in an identical manner.

Let $L_1(B)$ denote the contribution to
$\widetilde{L}_{k_1,k_2, \ell}(B)$ arising from
$(s,t)\in \ZZ^2$ for which $\gcd(s,t)\leq Z_1^{c/10}$.
Lemma~\ref{lem:large-gcd} implies that
the overall contribution to Lemma \ref{lem:final_prelim} from 
the remaining quantity is $O(B Z_1^{O(1)}/Z_1^{c/10})$, which is
satisfactory for large enough $c$.  This mimics the argument used to 
establish Lemma \ref{lem:small_kl}, where we also needed to reduce the
allowable size of $\gcd(s,t)$.

To estimate $L_1(B)$, we
write $(s,t)=L(s',t')$ with $\gcd(s',t')=1$ and 
$L\leq Z_1^{c/10}$ an odd integer
divisible by $\ell$.
The summation over $\nu$ allows us to assume that 
$2^{\nu}\mid 4s'$.  Let us write
$\la_i', \la_i''$ as in \eqref{eq:ghost'} and \eqref{eq:ghost}.
We conclude from Lemmas~\ref{lem:lattice} and \ref{lem:volume} that
$$
L_1(B)\ll \Osum_{\substack{L\leq Z_1^{c/10}\\ \ell \mid L}}
\Big(1+\frac{2^\nu \la_1\la_2 BZ_2}{L^2(\det \sfl')\max\{a,b\}^2}\Big),
$$
where now $\sfl'\subseteq \ZZ^2$ is the lattice of $(s',t')\in \ZZ^2$
for which
$$
\la_1'' \mid s', \quad
\la_2'' \mid s'-at',\quad 2^\nu\mid 4s'.
$$
Lemma \ref{lem:det} therefore leads us to an overall contribution of
\begin{align*}
&\ll
\sum_{(a,b)\in \A_1} \sum_{\nu\leq Z_2}
\Osum_{k_i,\la_i}
\Osum_{L\leq Z_1^{c/10}}
\sum_{\ell \mid 
L}\Big(1+\frac{\gcd(k_1k_2\la_1\la_2,L) 
BZ_2}{L^2k_1k_2\max\{a,b\}^2}\Big).
\end{align*}
Here the summation over
$k_i,\la_i$ is subject to the restriction that they should be odd,
with $k_i\lambda_i\mid a^2+(-1)^ib^2$, $k_i\leq K$ and
\eqref{eq:hunt} holding. In particular $k_1\la_1$ and
$k_2\la_2$ are coprime, so that 
$\gcd(k_1\lambda_1,\ell) \gcd(k_2\lambda_2,\ell)=
\gcd(k_1k_2\lambda_1\la_2,\ell)$.

For fixed $a,b,L$ there are 
$\leq \tau_3(|a^4-b^4|)\tau(L)Z_2\leq Z_1^{O(1)}$ values of~$\nu,k_i,\la_i,\ell$. Hence
the contribution from the first term in the above summand is
$O(BZ_1^{O(1)}/Z_1^{c})$, which is satisfactory for large $c$.  
The remaining contribution is clearly 
\begin{align*}
&\ll B(\log\log B)^2
\sum_{(a,b)\in \A_1}
\Osum_{k_i,\la_i}
\Osum_{L\leq Z_1^{c/10}}
\frac{\tau(L)\gcd(k_1k_2\la_1\la_2,L) }{L^2k_1k_2\max\{a,b\}^2},
\end{align*}
on carrying out the summation over $\nu$.
But for any $h\in \NN$ we have 
\begin{equation}\label{eq:exp}
\begin{split}
\sum_{n\leq x}\frac{\tau(n)\gcd(h,n) }{n^2}
\leq \sum_{d\mid h}
\sum_{\substack{n\leq x\\ d\mid n}}\frac{\tau(n)d}{n^2}
&\leq \sum_{d\mid h} \frac{\tau(d)}{d}
\sum_{m\leq x/d}\frac{\tau(m)}{m^2} \\
&\ll
\Big(\frac{h}{\varphi(h)}\Big)^2 \\
&\ll (\log\log h)^2.
\end{split}
\end{equation}
Hence we obtain the overall contribution
\begin{equation}\label{eq:la-r}
\ll B(\log\log B)^4
\sum_{(a,b)\in \A_1}\Osum_{k_i,\la_i}
\frac{1}{k_1k_2\max\{a,b\}^2}=E(B),
\end{equation}
say. 
We will find it convenient to proceed under the
additional hypothesis
\begin{equation}
   \label{eq:hunt'}
   \la_1\la_2\leq \sqrt{2}\max\{a,b\}^2.
\end{equation}
Underlying this assumption is
the symmetry that exists between $\la_i$ and $\mu_i$
such that~\eqref{eq:chick} holds.
Thus if \eqref{eq:hunt'} fails then it is immediately clear from the
definition~\eqref{eq:AA} of $\A_1$ that
$$
|\mu_1|\mu_2\leq \sqrt{2}\max\{a,b\}^2.
$$
Furthermore, \eqref{eq:hunt} translates into
$$
\frac{\max\{a,b\}(a^2+b^2)k_2|\mu_1|}{cBk_1K^3|a^2-b^2|}
\leq \mu_2<
\frac{\max\{a,b\}(a^2+b^2)k_2 K |\mu_1|}{Bk_1|a^2-b^2|}.
$$
This shares the same basic structure as \eqref{eq:hunt}, giving upper
and lower bounds for $\mu_2$ with ratio of order $K^4$. 
This discussion shows that we may proceed under the assumption that
\eqref{eq:hunt'} holds in our estimation of $E(B).$

We now break the summation over $a,b$ into dyadic intervals, in order
to fix attention on the range $A\leq
\max\{a,b\}< 2A$ for
$
A\leq \sqrt{B}/Z_1^c.
$
Interchanging the order of summation \eqref{eq:la-r} becomes
\begin{align*}
E(B)&\ll B(\log\log B)^4
\sum_A \frac{1}{A^2}  \Osum_{k_i,\la_i}
\frac{1}{k_1k_2}N(A),
\end{align*}
where
$$
N(A):=
\#\left\{
(a,b)\in \A_1:
A\leq \max\{a,b\}< 2A, ~k_i\la_i\mid a^2+(-1)^ib^2
\right\}.
$$
Furthermore, the summation over $k_i,\la_i\in \NN$ is over odd
integers subject to $k_i\leq K$ and
\begin{equation}
   \label{eq:ak}
\frac{B\la_1}{2 AK^2}
\leq \frac{Bk_1\la_1}{2k_2 AK}
<\la_2\leq \frac{cBK^3k_1\la_1}{k_2 A}
\leq \frac{cBK^4\la_1}{ A}.
\end{equation}

Let $\rho_i(q)$ be the number of square roots of $(-1)^{i+1}$ modulo $q$.
For given $q_1,q_2$ the conditions $q_1\mid 
a^2-b^2$ and $q_2 \mid a^2+b^2$ force the
vector $(a,b)$ to lie on one of at most~$\rho_1(q_1)\rho_2(q_2)$ integer
sublattices of $\ZZ^2$, each of determinant $q_1q_2$.
Now it is easy to see that
$\rho_1(q_1)\leq 2^{\omega(q_1)}$ and
$$
\rho_2(q_2)\leq
\sum_{e\mid q_2} |\mu(e)|\chi(e)\leq
r(q_2),
$$
where $\chi$ is the real non-principal character modulo $4$ and $r$
denotes the sums of two squares function. It follows
that $\rho_2(rq)\leq 2^{\omega(r)}r(q)$ for
any $r,q\in \NN$.
Since $a,b$ lie in an ellipse in $\RR^2$ with area $O(A^2)$, an 
application 
of Lemma \ref{lem:lattice}  furnishes the estimate
\begin{align*}
E(B) &\ll B(\log\log B)^4
\sum_A \frac{1}{A^2} \Osum_{k_i,\la_i}
\frac{2^{\omega(k_1k_2)}}{k_1k_2}
2^{\omega(\la_1)}r(\la_2)\Big(\frac{A^2}{k_1k_2\la_1\la_2}
+1\Big)\\
&\ll B(\log\log B)^4(\log K)^{4}
\sum_A \frac{1}{A^2} \Osum_{\la_1,\la_2}
2^{\omega(\la_1)}r(\la_2)\Big(\frac{A^2}{\la_1\la_2} +1\Big),
\end{align*}
on summing over $k_1,k_2\leq K$.
Here the inner summation is over
$\la_1,\la_2$ such that
$$
\frac{B\la_1}{2AK^2}
<\la_2\leq \frac{cBK^4\la_1}{A}, \quad \la_1\la_2\ll A^2,
$$
as follows from  \eqref{eq:hunt'} and \eqref{eq:ak}.
Thus we conclude that 
\begin{align*}
E(B) &\ll B(\log\log B)^{8}
\sum_A
\Osum_{\la_1,\la_2}
\frac{2^{\omega(\la_1)}r(\la_2)}{\la_1\la_2}\\
&\ll B(\log\log B)^{8}
\sum_A
\Osum_{\la_1}
\frac{2^{\omega(\la_1)}}{\la_1} \log (2cK^6)\\
&\ll B(\log B)^3(\log\log B)^{9},
\end{align*}
on summing first over $\la_2$, then over $\la_1$ and finally over the
$O(\log B)$ choices for $A$. This completes the proof of the lemma.
\end{proof}

Let $a,b,\nu,k_i,\la_i, \ell$ be an arbitrary choice of parameters
that appear in the main term in Lemma 
\ref{lem:final_prelim}'s estimate for $N_1(B)$.
We now come to our estimation of \eqref{eq:Tuesday}, where
$\sfl=\sfl(\k,\bla,\ell)\subset \ZZ^2$ is the lattice defined in
\eqref{eq:lattice-1} and
$\R_\nu'\subset \RR^2$ is given by~\eqref{eq:Rnu}.
Given that $\R_\nu'$ is clearly defined with piecewise continuous boundary,
we would like to apply the well-known formula \eqref{eq:classic}.
However, there are two complications that prevent a routine application
of this estimate. Firstly, we will need to take care of the 
$2$-adic conditions
apparent in~\eqref{eq:Tuesday}.  Secondly we need to deal with the fact that once summed over the
remaining parameters, the error term in
the above asymptotic formula for $\#(\sfl\cap \R_\nu')$ does not make a
satisfactory overall contribution from the point of view of the main
theorem.  

In spite of these objections we will dedicate the remainder of this
section to interpreting the main term in \eqref{eq:classic} in the
present context, delaying our discussion of the error term until the
subsequent section.
Let us consider the first issue mentioned above, namely the $2$-adic
conditions  on $s,t$. We retain the shorthand 
notation $\sfl=\sfl(\k,\bla,\ell)$
and make the observation that
$\vol(\R_\nu')=2^\nu\vol(\R_0')$, for any $\nu\geq 0.$
Furthermore, we will set
$$
\sfl_{i,j}:=\{(s,t)\in\sfl: 2^i\mid s, ~2^j\mid t\},
$$
for $i,j\geq 0$.
Since the parameters in $\sfl$ are all odd it easily follows
from Lemma~\ref{lem:det}
that $\det \sfl_{i,j}=2^{i+j}\det \sfl$.
It  is here that our earlier restriction to odd values of $\ell$ pays
dividends. Finally, it will be convenient to define
\begin{equation}
   \label{eq:pi}
   \pi_k:=
\begin{cases}
1, &\mbox{if $2\mid k$,}\\
0, &\mbox{if $2\nmid k$},
\end{cases}
\end{equation}
for any $k\in \NN$.

We will separate
our investigation according to the value of $\nu$.
We suppress the dependence on $k_1,k_2,\ell$ in
the expression for $\widetilde{L}_{k_1,k_2,\ell}(B)$ in
\eqref{eq:Tuesday}, replacing it by~$L^\nu(B)$ in order to
underline the dependence on $\nu$.
We have $3$ basic possibilities to consider: either  $\nu=0$ or $\nu=1$ or
$\nu \geq 3$. Note that \eqref{eq:nu}
ensures that the possibility~$\nu=2$ does not arise.

Let us begin by supposing that $\nu=0$, which according
to \eqref{eq:nu} is only possible when $2\mid ab$ and $2\nmid t$, so
that $\pi_{ab}=1$.
Recall the notation
introduced above for $\sfl_{i,j}$. It follows that
\begin{align*}
L^0(B)
&=\sum_{0\leq \sigma\leq 2Z_2}
\#\left\{
(s,t)\in \sfl\cap \R_0':
2^\sigma\| s, 2\nmid t
\right\}\\
&=\sum_{0\leq \sigma\leq 2Z_2}\sum_{i\geq 
0}\sum_{j\geq 
0}\mu(2^i)\mu(2^j)\#(\sfl_{\sigma+i,j}\cap \R_0').
\end{align*}
In line with \eqref{eq:classic} we expect the cardinality in the
summand to satisfy an asymptotic formula with main term
\begin{align*}
\frac{\vol(\R_0')}{\det \sfl_{\sigma+i,j}}=
\frac{\vol(\R_0')}{2^{\sigma+i+j} \det \sfl}.
\end{align*}
We may now conclude as follows.

\begin{lemma}\label{lem:L0}
We have
$$
L^0(B)=\pi_{ab}\Big(\frac{1}{2}-\frac{1}{2^{\lfloor 2Z_2
    \rfloor}}\Big)\frac{\vol(\R_0')}{  
\det \sfl} + E^0(B),
$$
where
$$
E^0(B):=\pi_{ab}
\sum_{0\leq \sigma\leq 2Z_2}\sum_{i,j\geq 0}\mu(2^i)\mu(2^j)
\Big(\#(\sfl_{\sigma+i,j}\cap
   \R_0')-
\frac{\vol(\R_0')}{\det \sfl_{\sigma+i,j}}
\Big).
$$
\end{lemma}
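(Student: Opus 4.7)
The plan is to unpack the definition of $L^0(B)$ from \eqref{eq:Tuesday} in the special case $\nu = 0$. According to \eqref{eq:nu}, this case only occurs when $2 \mid ab$ and $2 \nmid t$; equivalently, $L^0(B) = 0$ unless $\pi_{ab} = 1$, justifying the overall factor of $\pi_{ab}$. When $\pi_{ab} = 1$, the conditions $2 \nmid \gcd(s,t)$ and $\nu_2(s) \le 2Z_2$ from \eqref{eq:Tuesday} reduce, together with $2 \nmid t$, to simply requiring $2 \nmid t$ and $0 \le \nu_2(s) \le 2Z_2$. Stratifying by $\sigma := \nu_2(s)$ therefore gives
$$
L^0(B) = \pi_{ab}\sum_{0 \le \sigma \le 2Z_2} \#\{(s,t)\in \sfl \cap \R_0': 2^\sigma \| s,\ 2\nmid t\}.
$$

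Next I would apply elementary Möbius inversion to the two $2$-adic indicators. Using $\mathbf{1}[2^\sigma\| s] = \sum_{i\ge 0}\mu(2^i)\,\mathbf{1}[2^{\sigma+i}\mid s]$ and $\mathbf{1}[2\nmid t] = \sum_{j\ge 0}\mu(2^j)\,\mathbf{1}[2^j\mid t]$ (both sums in fact ranging only over $i,j\in\{0,1\}$), the above cardinality becomes $\sum_{i,j\ge 0}\mu(2^i)\mu(2^j)\,\#(\sfl_{\sigma+i,j}\cap \R_0')$. Substituting the heuristic main term $\vol(\R_0')/\det\sfl_{\sigma+i,j}$ for the lattice point count and invoking $\det\sfl_{\sigma+i,j} = 2^{\sigma+i+j}\det\sfl$, together with the error term $\#(\sfl_{\sigma+i,j}\cap\R_0') - \vol(\R_0')/\det\sfl_{\sigma+i,j}$ absorbed into $E^0(B)$, reduces the claim to the purely arithmetic identity
$$
\sum_{0\le\sigma\le 2Z_2}\frac{1}{2^\sigma}\sum_{i,j\ge 0}\frac{\mu(2^i)\mu(2^j)}{2^{i+j}} \;=\; \frac{1}{2} - \frac{1}{2^{[2Z_2]}}
$$
(up to the normalization convention adopted in the paper). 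The inner double sum factors as $\bigl(\sum_{i\ge0}\mu(2^i)2^{-i}\bigr)^2$, and the outer sum is a finite geometric series, so this identity is a direct computation.

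There is no genuine obstacle here: the lemma is essentially a bookkeeping exercise, and its content is to set up the error quantity $E^0(B)$ for later analysis. The substantive work will lie in controlling $E^0(B)$ in subsequent sections, where the naive lattice-point bound $O(\partial\R_0' + 1)$ from \eqref{eq:classic} is too weak once summed over all parameters $(a,b,\nu,k_i,\lambda_i,\ell)$, and will need to be replaced by a Poisson summation argument as anticipated in \S\ref{s:intro}.
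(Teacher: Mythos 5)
Your proposal is correct and follows essentially the same route as the paper: stratify by $\sigma=\nu_2(s)$, detect $2^\sigma\| s$ and $2\nmid t$ by M\"obius sums over $i,j\in\{0,1\}$, and define $E^0(B)$ as the discrepancy between $\#(\sfl_{\sigma+i,j}\cap\R_0')$ and $\vol(\R_0')/\det\sfl_{\sigma+i,j}$, using $\det\sfl_{\sigma+i,j}=2^{\sigma+i+j}\det\sfl$. Only note that the arithmetic identity you reduce to actually evaluates to $\tfrac14\sum_{0\le\sigma\le 2Z_2}2^{-\sigma}=\tfrac12-2^{-[2Z_2]-2}$ (the normalisation consistent with Lemma \ref{lem:L2}), so the exponent in the stated constant is a harmless slip in the lemma itself, absorbed later in the $O(2^{-Z_2})$ term.
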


We have found it useful to include $\pi_{ab}$ in the main
term for this estimate, to help keep track of the fact that
we are only interested in the value of $L^0(B)$ when $2\mid ab$.
When $\nu=1$ it follows from
\eqref{eq:nu} that
$2\mid t$ if $2\mid ab$ and
$2\nmid s$ if $2\nmid ab$.
Hence \eqref{eq:Tuesday} yields
$$
L^1(B)=
\pi_{ab}\#\left\{
(s,t)\in \sfl\cap \R_1':
2\nmid s, ~2\mid t\right\}
+\pi_{ab+1}\#\left\{
(s,t)\in \sfl\cap \R_1':
2\nmid s\right\}.
$$
Arguing as above, we now have
\begin{align*}
L^1(B)=
\pi_{ab}\sum_{i \geq 0}\mu(2^i)
\#(\sfl_{i,1} \cap \R_1')
+
\pi_{ab+1}
\sum_{i\geq 0}\mu(2^i)\#(\sfl_{i,0} \cap \R_1').
\end{align*}
Drawing out the obvious main term, as previously,
we therefore conclude the proof of the following result.

\begin{lemma}\label{lem:L1}
We have
$$
L^1(B)=\left(1-\frac{\pi_{ab}}{2}\right)\frac{\vol(\R_0')}{ \det \sfl}
  + E_\al^1(B)+E_\be^1(B),
$$
where
\begin{align*}
E_\al^1(B)&:=
\pi_{ab}\sum_{i \geq 0}\mu(2^i)
\Big(\#(\sfl_{i,1}\cap \R_1') -
\frac{\vol(\R_1')}{\det \sfl_{i,1}}\Big), \\
E_\be^1(B)&:=\pi_{ab+1} \sum_{i\geq 0}\mu(2^i)\Big(\#(\sfl_{i,0}\cap
   \R_1')-
\frac{\vol(\R_1')}{\det \sfl_{i,0} }
\Big).
\end{align*}
\end{lemma}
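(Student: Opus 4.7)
The plan is to proceed directly from the Möbius-inverted expression
$$
L^1(B)= \pi_{ab}\sum_{i \geq 0}\mu(2^i)\#(\sfl_{i,1} \cap \R_1')
+ \pi_{ab+1} \sum_{i\geq 0}\mu(2^i)\#(\sfl_{i,0} \cap \R_1')
$$
displayed immediately before the lemma, and to carry out the same decomposition that produced Lemma~\ref{lem:L0}: approximate each cardinality $\#(\sfl_{i,j}\cap\R_1')$ by its expected density $\vol(\R_1')/\det\sfl_{i,j}$, collect the residuals (weighted by $\pi_{ab}\mu(2^i)$ and $\pi_{ab+1}\mu(2^i)$ respectively) into $E_\al^1(B)$ and $E_\be^1(B)$, and verify that the extracted main terms sum to $(1-\pi_{ab}/2)\vol(\R_0')/\det\sfl$. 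The case split forced by \eqref{eq:nu} at $\nu=1$ is exactly what necessitates two separate sums: either $2\mid ab$ with $2\nmid s,\ 2\mid t$, or $2\nmid ab$ with $2\nmid s$, which is precisely what the prefactors $\pi_{ab}$ and $\pi_{ab+1}$ encode.

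For the main-term evaluation I would combine two ingredients already in hand: the identity $\det\sfl_{i,j}=2^{i+j}\det\sfl$ (from Lemma~\ref{lem:det}, valid because every parameter defining $\sfl$ is odd---this being exactly the reason for the restriction to odd $\ell$ in~\eqref{eq:Lk1k2l}), and the scaling $\vol(\R_\nu')=2^\nu\vol(\R_0')$ noted in the setup. Using the elementary evaluation $\sum_{i\geq 0}\mu(2^i)/2^i=\tfrac12$, the $\pi_{ab}$-sum contributes $\pi_{ab}\vol(\R_0')/(2\det\sfl)$ and the $\pi_{ab+1}$-sum contributes $\pi_{ab+1}\vol(\R_0')/\det\sfl$. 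Since exactly one of $ab$ and $ab+1$ is even, so that $\pi_{ab}+\pi_{ab+1}=1$ and at most one prefactor is nonzero, a two-case check ($\pi_{ab}=1$ versus $\pi_{ab}=0$) shows that the pooled main term equals $(1-\pi_{ab}/2)\vol(\R_0')/\det\sfl$ in either case, as claimed.

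There is no genuine obstacle in the lemma itself: it is a direct transcription of the Möbius/density decomposition already performed in Lemma~\ref{lem:L0}, only over the two sublattices $\sfl_{i,1}$ and $\sfl_{i,0}$ in place of a single $\sfl_{\sigma+i,j}$, and with no additional $\sigma$-summation because the $2$-adic valuation of $s$ is now pinned to $0$. The substantive difficulty, which the present lemma postpones, lies in eventually bounding $E_\al^1(B)$ and $E_\be^1(B)$ after summation over $(a,b,\nu,k_i,\la_i,\ell)$: the crude discrepancy bound \eqref{eq:classic} is insufficient for this, and one must instead open up $\#(\sfl_{i,j}\cap\R_1')-\vol(\R_1')/\det\sfl_{i,j}$ via Poisson summation, as indicated in the introduction.
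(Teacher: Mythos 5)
Your proposal is correct and follows essentially the same route as the paper: split according to the parity of $ab$ as dictated by \eqref{eq:nu} at $\nu=1$, detect $2\nmid s$ by Möbius over $\sfl_{i,1}$ and $\sfl_{i,0}$, and extract the main term using $\det\sfl_{i,j}=2^{i+j}\det\sfl$, $\vol(\R_1')=2\vol(\R_0')$ and $\sum_{i\geq 0}\mu(2^i)2^{-i}=\tfrac12$, with the error terms defined as the residual discrepancies. Your two-case verification that $\tfrac{\pi_{ab}}{2}+\pi_{ab+1}=1-\tfrac{\pi_{ab}}{2}$ is exactly the "obvious main term" the paper draws out, so nothing further is needed.
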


Assume now that $\nu\geq 3$, under which assumption
\eqref{eq:nu} implies that $2\nmid ab$ and~$2\mid s$,
with $
\nu=\min\{2+\nu_2(s), \nu_2(a^2-b^2)\}.
$
In particular, if $\nu_2(s)\leq \nu_2(a^2-b^2)-2$ then it follows that
$\nu_2(s)=\nu-2$. This is then automatically bounded above by~$2Z_2$.
Otherwise we must have
$\nu_2(s)\geq  \nu_2(a^2-b^2)-1$  and $\nu_2(a^2-b^2)=\nu$.
Moreover, we must force $2\nmid t$ in our considerations.
We deduce from \eqref{eq:Tuesday} that $L^\nu(B)$ is the sum of
$$
T_1:=\pi_{ab+1}\sum_{i,j\geq 0}\mu(2^i)\mu(2^j)
\#(\sfl_{\nu-2+i,j}\cap \R_\nu')
$$
and
$$
T_2:=\pi_{ab+1}\sum_{\nu-1\leq \sigma\leq 2Z_2}\sum_{i,j\geq 0}\mu(2^i)\mu(2^j)
\#(\sfl_{\sigma+i,j}\cap \R_\nu').
$$
Note the first term is always present, since $\nu\leq
\max\{1,\nu_2(a^2-b^2)\}$,
but the latter term only appears if $\nu=\nu_2(a^2-b^2)$.
The following result is now available.

\begin{lemma}\label{lem:L2}
Let $\nu \geq 3$. Then we have
$$
L^{\nu}(B)=
\pi_{ab+1}\frac{\vol(\R_0')}{\det \sfl}
  + E_\al^{\nu}(B),
$$
if $\nu<\nu_2(a^2-b^2)$ and
$$
L^{\nu}(B)=
\pi_{ab+1}\Big(2-\frac{2^{\nu}}{2^{\lfloor 2Z_2 \rfloor+2}}\Big)\frac{\vol(\R_0')}{\det \sfl}
  + E_\al^{\nu}(B)+E_\be^{\nu}(B),
$$
if $\nu=\nu_2(a^2-b^2)$, where
\begin{align*}
E_\al^\nu(B)&:= \pi_{ab+1}
\sum_{i,j\geq 0}\mu(2^i)\mu(2^j)
\Big(
\#(\sfl_{\nu-2+i,j}\cap \R_\nu')
-\frac{\vol(\R_\nu')}{\det
   \sfl_{\nu-2+i,j}}\Big),\\
E_\be^\nu(B)&:= \pi_{ab+1}
\sum_{\nu-1\leq \sigma\leq 2Z_2}\sum_{i,j\geq 0}\mu(2^i)\mu(2^j)
\Big(\#(\sfl_{\sigma+i,j}\cap
\R_\nu')
-\frac{\vol(\R_\nu')}{\det \sfl_{\sigma+i,j}}\Big).
\end{align*}
\end{lemma}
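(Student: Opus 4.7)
The plan is to treat Lemma \ref{lem:L2} as a pure bookkeeping computation, parallel in spirit to Lemmas \ref{lem:L0} and \ref{lem:L1}, in which each lattice count $\#(\sfl_{\alpha,j}\cap\R_\nu')$ is replaced by its expected main term $\vol(\R_\nu')/\det\sfl_{\alpha,j}$, the residual differences being absorbed into $E_\al^\nu(B)$ and $E_\be^\nu(B)$. The starting point is the identity $L^\nu(B)=T_1+T_2$ already recorded in the excerpt, with $T_2$ present only when $\nu=\nu_2(a^2-b^2)$, together with the two facts $\vol(\R_\nu')=2^\nu\vol(\R_0')$ and $\det\sfl_{i,j}=2^{i+j}\det\sfl$ (this last equality being what makes the earlier restriction to odd $\ell$ essential).

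First I would handle $T_1$. Writing
\[
\#(\sfl_{\nu-2+i,j}\cap\R_\nu')=\frac{\vol(\R_\nu')}{\det\sfl_{\nu-2+i,j}}+\Big(\#(\sfl_{\nu-2+i,j}\cap\R_\nu')-\frac{\vol(\R_\nu')}{\det\sfl_{\nu-2+i,j}}\Big),
\]
the second term, after summation against $\mu(2^i)\mu(2^j)$ and multiplication by $\pi_{ab+1}$, is precisely the definition of $E_\al^\nu(B)$. For the first term, substituting $\det\sfl_{\nu-2+i,j}=2^{\nu-2+i+j}\det\sfl$ and $\vol(\R_\nu')=2^\nu\vol(\R_0')$ factors out $4\vol(\R_0')/\det\sfl$, leaving the double sum $\sum_{i\ge 0}\mu(2^i)2^{-i}\cdot\sum_{j\ge 0}\mu(2^j)2^{-j}=(1/2)^2=1/4$. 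Thus $T_1$ contributes $\pi_{ab+1}\vol(\R_0')/\det\sfl+E_\al^\nu(B)$, which already yields the first assertion (the case $\nu<\nu_2(a^2-b^2)$ where $T_2$ is absent).

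Next, for the case $\nu=\nu_2(a^2-b^2)$, I would repeat the manoeuvre on $T_2$, defining the residual as $E_\be^\nu(B)$. The corresponding main term reduces to
\[
\pi_{ab+1}\frac{\vol(\R_0')}{4\det\sfl}\sum_{\sigma=\nu-1}^{[2Z_2]}2^{\nu-\sigma},
\]
after using $\vol(\R_\nu')/\det\sfl_{\sigma+i,j}=2^{\nu-\sigma-i-j+2}\vol(\R_0')/\det\sfl$ together with $\sum_{i}\mu(2^i)2^{-i}\sum_{j}\mu(2^j)2^{-j}=1/4$. The inner geometric sum equals $\sum_{k=0}^{[2Z_2]-\nu+1}2^{1-k}=4-2^{\nu-[2Z_2]}$, so the contribution of $T_2$ becomes $\pi_{ab+1}(1-2^\nu/2^{[2Z_2]+2})\vol(\R_0')/\det\sfl$. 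Adding this to the $T_1$ contribution gives the factor $2-2^\nu/2^{[2Z_2]+2}$ in the statement.

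There is no real obstacle here: the argument is an exercise in Möbius inversion and a finite geometric series. The only point where care is needed is the case analysis itself, namely the observation already recorded before the statement that $\nu_2(s)=\nu-2$ when $\nu_2(s)\le \nu_2(a^2-b^2)-2$ and $\nu_2(s)\ge\nu-1$ otherwise, which forces the split $L^\nu(B)=T_1+T_2$ and determines exactly when $T_2$ is present. Once this split is fixed, both the main terms and the error terms are produced mechanically.
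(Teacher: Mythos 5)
Your proposal is correct and follows essentially the same route as the paper: the paper likewise takes the split $L^\nu(B)=T_1+T_2$ (with $T_2$ present only when $\nu=\nu_2(a^2-b^2)$), draws out the main term using $\det\sfl_{\sigma+i,j}=2^{\sigma+i+j}\det\sfl$ together with $\vol(\R_\nu')=2^\nu\vol(\R_0')$, and defines $E_\al^\nu(B)$, $E_\be^\nu(B)$ as the residual differences, exactly as you do. The only blemish is the stray ``$+2$'' in your stated identity, which should read $\vol(\R_\nu')/\det\sfl_{\sigma+i,j}=2^{\nu-\sigma-i-j}\vol(\R_0')/\det\sfl$; your subsequent display and the geometric-series evaluation $\sum_{\sigma=\nu-1}^{[2Z_2]}2^{\nu-\sigma}=4-2^{\nu-[2Z_2]}$ are nevertheless correct and yield the stated factor $2-2^{\nu}/2^{[2Z_2]+2}$.
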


\section{The error terms}\label{s:2}

It is now time to establish that the error terms in Lemmas
\ref{lem:L0}, \ref{lem:L1} and \ref{lem:L2} make a satisfactory
overall contribution once summed up over the relevant
parameters appearing in Lemma \ref{lem:final_prelim}.
In order not to be encumbered with superfluous 
notation let us fix our attention on the error 
term
$$
   E(B):=\#(\sfl\cap \R'(X))-
\frac{\vol(\R'(X))}{\det \sfl},
$$
where $X$ is given by \eqref{eq:XX}. 
The outcome of our investigation will be a proof of the
following result.

\begin{lemma}\label{lem:exp}
Let
\begin{align*}
\EE(B):=~&
\sum_{(a,b)\in \A_1} \sum_{\nu}
\Osum_{\substack{k_1\lambda_1\mid a^2-b^2\\k_1\leq K}}
\Osum_{\substack{k_2\lambda_2\mid a^2+b^2\\k_2\leq K}}\\
&\times \Osum_{\ell \leq  Z_1^{c/10}}
|\mu(k_1)\mu(k_2)\mu(\ell)|
\chi(k_1\la_1,k_2\la_2;B/K)
|E(B)|,
\end{align*}
where $K$ is given by \eqref{eq:K} and the $\nu$ summation is over
$\nu \leq \max\{1,\nu_2(a^2-b^2)\}$. Then we have
$$
\EE(B)\ll B\frac{Z_1^{O(1)}}{Z_1^{c}}+B(\log B)^3(\log\log B)^2.
$$
\end{lemma}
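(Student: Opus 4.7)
\medskip

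The plan is to handle $E(B)$ via Poisson summation applied to a smoothed version of $\chi_{\R'(X)}$. Writing $\sfl=\sfl(\k,\bla,\ell)$ with dual lattice $\sfl^*$, and introducing a weight function $w \in C_c^\infty(\RR^2)$ that approximates $\chi_{\R'(X)}$ up to an $O(\delta)$-boundary layer (with $\delta$ to be optimised), Poisson gives
\[
\sum_{\mathbf{v}\in\sfl} w(\mathbf{v}) = \frac{1}{\det\sfl}\sum_{\mathbf{m}\in\sfl^*}\widehat{w}(\mathbf{m}),
\]
so that $E(B)$ splits into a boundary error $O(\delta\cdot \partial\R/\det\sfl)$ coming from smoothing together with $(\det\sfl)^{-1}\sum_{\mathbf{m}\neq 0}\widehat{w}(\mathbf{m})$. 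The point of this approach, as opposed to the cruder \eqref{eq:classic}, is that the trivial estimate $\#(\sfl\cap\R')\ll 1+\partial\R/\det\sfl$ would cost us a factor $\sqrt{X}/(\det\sfl)^{1/2}$ which is prohibitive once summed against $\chi_{k_1\la_1,k_2\la_2}(B/K)$.

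The second step is to estimate the Fourier tail. Since $\R'(X)$ is defined by the three quadratic inequalities $0<-Q_1,-Q_2\le X$ and $-Q_3>0$, after an affine change of variables reducing $(Q_1,Q_2)$ to a product of intervals, the boundary of $\R'$ is piecewise algebraic of bounded degree. Standard stationary-phase/van der Corput estimates then yield $\widehat{w}(\mathbf{m})\ll X(1+\delta|\mathbf{m}|)^{-N}$ for arbitrarily large $N$, uniformly in the parameters, provided one keeps track of the diameter bounds on $\R'$ furnished by Lemma \ref{lem:volume}. Summing over $\mathbf{m}\in\sfl^*\setminus\{0\}$ and balancing $\delta$ against the length of the shortest non-zero vector of $\sfl^*$ (which has norm $\gg(\det\sfl)^{-1/2}$ by Minkowski) produces a pointwise bound of the shape
\[
|E(B)|\ll (\det\sfl)^{-1/2}\bigl(X/|a^2-b^2|\bigr)^{1/2}+\,(\text{lower order}),
\]
with $\det\sfl=k_1k_2\la_1\la_2\ell^2/\gcd(k_1k_2\la_1\la_2,\ell)$ by Lemma \ref{lem:det}.

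The third step is the summation over $(a,b,\nu,k_i,\la_i,\ell)$. Because the summand involves $\chi_{k_1\la_1,k_2\la_2}(B/K)$, the restriction $(k_1\la_1,k_2\la_2)\in V_{a,b}(B/K)$ forces the product $k_1\la_1k_2\la_2$ into a narrow range depending on $\max\{a,b\}$; dyadic decomposition of $\max\{a,b\}\in[A,2A)$ and of the $(\la_1,\la_2)$-box then reduces everything to a weighted count over pairs $(a,b)$ lying on at most $2^{\omega(k_1\la_1)}r(k_2\la_2)$ sublattices of $\ZZ^2$ of determinant $k_1k_2\la_1\la_2$, exactly as in Lemma \ref{lem:final_prelim}. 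The resulting sums are controlled through Lemma \ref{lem:lattice}, the elementary estimate $\sum_{n\le x}2^{\omega(n)}r(n)/n\ll(\log x)^2$, and trivial bounds $\tau_k(|a^4-b^4|)\ll Z_1^{O(1)}$ for the $(a,b)$ we care about, together with the Nair-type estimates of Lemmas \ref{lem:sky} and \ref{lem:nair} where a genuine average is needed. The contribution of large $\ell$ (or equivalently $\gcd(s,t)$) is removed using Lemma \ref{lem:large-gcd}, which yields the term $BZ_1^{O(1)}/Z_1^c$; the remaining contribution is bounded by $B(\log B)^3(\log\log B)^2$, the $(\log\log B)^2$ arising from the summation over $\nu\le Z_2$ in Lemma \ref{lem:L2}.

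The hardest part will be obtaining a uniform Fourier-decay estimate for $\widehat{w}$ that is sharp enough to survive summation: the region $\R'(X)$ degenerates as $|a-b|$ becomes small or as $\la_1,\la_2$ move toward the extremes of $V_{a,b}(B/K)$, and naive bounds lose precisely a factor of $\log B$. This is why the $\A_1$-restriction $\max\{a,b\}\le Z_2^2\min\{a,b,|a-b|\}$ (which prevents the conic \eqref{eq:conic} from becoming too close to singular) and the truncation $\nu_2(a^2-b^2)\le Z_2$ are both essential: they ensure the implied constants in the stationary-phase bounds only cost powers of $Z_2=\log\log B$, leaving a final acceptable loss of $(\log\log B)^2$ against the expected main term $B(\log B)^4$.
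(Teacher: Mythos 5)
There is a genuine gap at the heart of your second step. You deduce a \emph{pointwise} bound of the shape $|E(B)|\ll(\det\sfl)^{-1/2}\bigl(X/|a^2-b^2|\bigr)^{1/2}$ by ``balancing $\delta$ against the length of the shortest non-zero vector of $\sfl^*$ (which has norm $\gg(\det\sfl)^{-1/2}$ by Minkowski)''. Minkowski's theorem gives the inequality the other way round: it provides the \emph{upper} bound $\lambda_1(\sfl^*)\ll(\det\sfl^*)^{1/2}=(\det\sfl)^{-1/2}$, and for the lattices $\sfl(\k,\bla,\ell)$ occurring here the dual lattice typically does have non-zero vectors much shorter than $(\det\sfl)^{-1/2}$, because $\sfl$ is badly unbalanced: for instance $(0,[k_2\la_2,\ell])\in\sfl$, which is far shorter than $(\det\sfl)^{1/2}$ whenever $k_1\la_1$ is much larger than $k_2\la_2$ (and the constraint $(k_1\la_1,k_2\la_2)\in V_{a,b}(B/K)$ does not prevent this). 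Consequently the first non-zero dual frequencies lie well inside the scale at which your smoothed Fourier transform starts to decay, the tail sum is not negligible, and no pointwise ``square root of the main term'' bound for the lattice-counting error is available — indeed, for a skew lattice the true error can be as large as (diameter of $\R'$)$/\lambda_1(\sfl)$, which exceeds your claimed bound and, summed over all parameters, is exactly the loss that defeats the naive estimate \eqref{eq:classic}. Your closing paragraph locates the difficulty in the non-uniform geometry of $\R'(X)$, but the $\A_1$-restrictions only control that aspect; they do nothing about the short dual vectors, which are the real obstruction.

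The paper resolves this without any pointwise bound: after Gaussian smoothing (with $H=(\log B)^6$) and Poisson summation modulo the congruence $\la_1''\alpha\equiv\la_2''\beta\bmod a$, the degenerate frequencies ($m=0$, $n=0$, $m\la_2''+n\la_1''=0$, and those making some $Q_j$ vanish) are estimated directly using the inequalities encoded in $V_{a,b}(B/K)$, and the remaining frequencies are re-parametrised as lattice points $(s,t)\in\sfl$ whose non-negligible contribution forces $(s,t)\in\R^\dagger(cA^2R^2)$; the resulting count, \emph{summed over the whole family} $(a,b,\nu,k_i,\la_i,\ell)$, is then bounded by Lemma \ref{lem:large-gcd}, i.e.\ ultimately by the conic-counting upper bounds of Lemma \ref{lem:ab-small}. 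This averaging-over-the-family mechanism (which also supplies the $BZ_1^{O(1)}/Z_1^{c}$ term — not, as you suggest, a removal of large $\ell$, which was already done in \S\ref{sec:1}) is what your proposal is missing, and without it, or some substitute exploiting cancellation/averaging in the dual sum across $(a,b,\la_1,\la_2,\ell)$, the argument cannot reach the stated bound.
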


There is a degree of over simplification here. Indeed,
for $\nu \geq 0$  we are actually
interested in controlling the overall contribution from the error
terms~$
E^0(B), E_\al^\nu(B)$ and $E_\be^\nu(B)$
that appear in Lemmas \ref{lem:L0}--\ref{lem:L2}.
However 
the argument goes through for each of these, since 
the summation over $i,j,\sigma$ in the true error terms
is over $i,j\leq 1$ and
$\sigma\leq 2Z_2$. Hence at the expense of a harmless extra factor of
$\log\log B$ in Lemma~\ref{lem:exp} the overall
contribution is seen to be satisfactory.

The proof of Lemma \ref{lem:exp} is long and technical. 
We begin with some general facts about approximating the
characteristic function of suitable
bounded regions $\mathcal{S}\subset \RR^2$ by smooth Gaussian
weights. For any $\x\in \RR^2$ let
$$
B(\x,r):=\{\y\in \RR^2: \|\y-\x\|\leq r\}
$$ 
denote the ball centered on
$\x$ with radius $r$, where
$\|\mathbf{z}\|=\sqrt{z_1^2+z_2^2}$ denotes the Euclidean norm of a
vector $\mathbf{z}\in \RR^2$. 
Let $H\geq 1$ be a parameter at our disposal.
Define
$$
S_-:=\RR^2\setminus \bigcup_{\x\not \in \mcal{S}} B\Big(\x,\frac{1}{\sqrt{H}}\Big),
\quad
S_+:=\bigcup_{\x\in \mcal{S}} B\Big(\x,\frac{1}{\sqrt{H}}\Big),
$$
and notice that 
$
S_-\subseteq \mathcal{S}\subseteq S_+.
$
We introduce the infinitely differentiable weight functions
$$
w_{\pm}(\x):=\frac{H^2}{\pi} \int_{S_{\pm}} \exp(-\|\y-\x\|^2H^2) \d\y
$$
and set
\begin{equation}
  \label{eq:h-inn}
W_{\pm}(x,y):=\int_{-\infty}^\infty\int_{-\infty}^\infty
w_\pm(u,v)e(-ux-vy)\d u \d v,
\end{equation}
for the corresponding Fourier transform. We collect together some basic properties of these functions in
the following result.

\begin{lemma}\label{lem:HB}
Let $N\geq 0$ be an arbitrary integer and let $H\geq 1$. 
Let $\mathcal{S}\subset \RR^2$ be a region enclosed by a piecewise
differentiable boundary, which is contained in $[-r,r]^2$ for some
$r>0.$  Then the following hold:
\begin{enumerate}
\item
For any $\x\in \RR^2$ we have
$0\leq w_\pm(\x)\leq 1$.
\item
There exists a function $\widetilde{w}_-:
\RR^2\rightarrow \RR$ such that for any $\x\in \RR^2$ we have
$$
w_-(\x)+\widetilde{w}_-(\x)\leq 1_{\mathcal{S}}(\x) \leq w_+(\x)\big(1+O_N(H^{-N})\big),
$$
where $1_{\mathcal{S}}$ is the characteristic function of the set
$\mcal{S}$ and 
$$
\widetilde{w}_-(\x)\ll \frac{e^{-H}}{1+|\x|^2}.
$$
\item
We have 
\begin{align*}
\int_{\RR^2} w_\pm(\x)\d \x =
\vol(S_{\pm})=
\vol(\mathcal{S})+O\Big(\frac{1}{\sqrt{H}}\Big).
\end{align*}
\item We have
$
W_\pm(x,y)\ll_N H^{2N}\max\{|x|,|y|\}^{-N}.
$
\end{enumerate}
The implied constants in these estimates depends
implicitly on $r$.
\end{lemma}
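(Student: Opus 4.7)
The plan is to view $w_\pm=1_{S_\pm}*k_H$ as the convolution of $1_{S_\pm}$ with the Gaussian probability density
$$k_H(\x):=\frac{H^2}{\pi}e^{-|\x|^2H^2},$$
which integrates to $1$ after the substitution $\y=H\x$. Part~(1) is then immediate since $1_{S_\pm}\in[0,1]$ and $k_H\ge 0$. For part~(3), Fubini yields $\int_{\RR^2}w_\pm=\vol(S_\pm)$; the comparison $\vol(S_\pm)=\vol(\mathcal{S})+O(1/\sqrt{H})$ follows because $S_+\setminus S_-$ lies in the tubular neighbourhood of $\partial\mathcal{S}$ of width $2/\sqrt{H}$, whose area is controlled by the arclength of $\partial\mathcal{S}$. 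The hypotheses that $\mathcal{S}\subset[-c,c]^2$ has piecewise differentiable boundary ensure this arclength is finite and bounded in terms of $c$.

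For part~(4) I would integrate by parts $N$ times in whichever of the two variables has its dual frequency attaining $\max\{|x|,|y|\}$. Since $w_\pm=1_{S_\pm}*k_H$, the derivatives must land on $k_H$, and rescaling $\y=H\x$ shows that
$$\|\partial_u^N k_H\|_{L^1(\RR^2)}\ll_N H^N,$$
each derivative effectively bringing down one factor of $H$ (a scaled Hermite polynomial integrated against a unit Gaussian). Young's convolution inequality then yields $\|\partial_u^N w_\pm\|_{L^1}\le \vol(S_\pm)\|\partial_u^N k_H\|_{L^1}\ll_N H^N$. After the $N$ integrations by parts a factor of $(2\pi i x)^{-N}$ appears outside the integral, boundary terms vanish by super-exponential decay of $k_H$, and bounding the remaining integrand by its $L^1$-norm gives $W_\pm(x,y)\ll_N H^N\max\{|x|,|y|\}^{-N}$, in fact sharper than the stated $H^{2N}$.

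The main obstacle is part~(2), which requires exhibiting the correction $\widetilde{w}_-$. The upper inequality is painless: if $\x\in\mathcal{S}$ then $B(\x,1/\sqrt{H})\subseteq S_+$ by construction, so a polar-coordinate computation gives
$$w_+(\x)\ge\frac{H^2}{\pi}\int_{B(\x,1/\sqrt{H})}e^{-|\y-\x|^2H^2}\d\y=1-e^{-H},$$
which rearranges to $1_{\mathcal{S}}(\x)\le w_+(\x)(1+O(e^{-H}))$ and comfortably beats the required $O_N(H^{-N})$. For the lower inequality, if $\x\notin\mathcal{S}$ then by definition of $S_-$ every $\y\in S_-$ satisfies $|\y-\x|\ge 1/\sqrt{H}$, and the Gaussian tail yields $w_-(\x)\le e^{-H}$; for $|\x|$ large, the distance from $\x$ to $S_-\subseteq[-c,c]^2$ grows linearly in $|\x|$, and the Gaussian decay absorbs the factor $1/(1+|\x|^2)$, giving $w_-(\x)\ll e^{-H}/(1+|\x|^2)$ uniformly for $\x\notin\mathcal{S}$. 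One may then take
$$\widetilde{w}_-(\x):=-w_-(\x)\mathbf{1}_{\{\x\notin\mathcal{S}\}},$$
which is non-positive, satisfies the claimed decay, and makes $w_-+\widetilde{w}_-$ agree with $1_{\mathcal{S}}$ pointwise. The only genuine content is the uniform tail estimate for $w_-$, which is a two-regime argument (bounded $|\x|$ versus large $|\x|$) but otherwise routine.
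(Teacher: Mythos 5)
Your proposal is correct and follows essentially the same route as the paper: the convolution viewpoint for (1) and (3), the inscribed-ball lower bound for $w_+$ on $\mathcal{S}$ together with the two-regime Gaussian tail bound for $w_-$ off $\mathcal{S}$ (with $\widetilde{w}_-=-w_-$ off $\mathcal{S}$) for (2), and repeated integration by parts for (4). Your version of (4) even yields the slightly sharper bound $H^N\max\{|x|,|y|\}^{-N}$, which of course implies the stated estimate since $H\geq 1$.
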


\begin{proof}
To begin with we record the trivial inequalities
$$
0\leq w_{\pm}(\x)\leq 
\frac{H^2}{\pi} \int_{\RR^2} \exp(-\|\y-\x\|^2H^2) \d\y=1,
$$
for any $\x\in\RR^2$, which establishes part (1). 

Turning to part (2) we observe that for 
$\x\in \mcal{S}$ we have
\begin{align*}
w_+(\x)
&\geq 
\frac{H^2}{\pi} \int_{\y\in B(\x,\frac{1}{\sqrt{H}})}
\exp(-\|\y-\x\|^2H^2) \d\y\\
&=
\frac{1}{\pi} \int_{\y\in [-\sqrt{H},\sqrt{H}]^2}
\exp(-\|\y\|^2) \d\y\\
&\geq 1+O\big(\exp(-H)\big).
\end{align*}
Since $w_+(\x)\geq 0$ when $\x\not\in \mathcal{S}$ it easily follows
that
$$
1_{\mathcal{S}}(\x) \leq w_+(\x)\big(1+O_N(H^{-N})\big),
$$
for any $N\geq 0$. Next, if $\x\in \mcal{S}$ then $w_-(\x)\leq
1$. Suppose now that $\x\not\in \mcal{S}$. Then we have 
\begin{align*}
w_-(\x)
&\leq 
\frac{H^2}{\pi} \int_{\y\in \RR^2\setminus B(\x,\frac{1}{\sqrt{H}})}
\exp(-\|\y-\x\|^2H^2) \d\y
= O\big(\exp(-H)\big).
\end{align*}
This estimate suffices for part (2) when  
$\|\x\|\leq 4r$. 
Since $\mcal{S}\subseteq [-r,r]^2$ we see that
$\|\y\|\leq 2r$ if  $\y\in S_{-}$.
If $\|\x\|\geq 4r$ then for 
all $\y\in S_{-}$ we have 
$\|\y-\x\|^2\geq \frac{1}{4}\|\x\|^2$ since
$\|\y\|\leq 2r\leq 
\frac{1}{2}\|\x\|$. It follows that 
$$
w_-(\x)
\leq 
\frac{H^2}{\pi} \exp\Big( -\frac{\|\x\|^2H^2}{4}\Big)\int_{\y\in
  S_+}\hspace{-0.1cm}\d\y \ll \exp\Big( -\frac{\|\x\|^2H^2}{5}\Big)\ll
\frac{\exp(-H)}{|\x|^2},
$$
if $\|\x\|\geq 4r$, which therefore suffices to complete the proof of
part (2).

Turning to part (3), we note that the
estimate for $\vol(S_{\pm})$ is 
a consequence of the piecewise differentiability of the boundary of $\mcal{S}$.
Finally part  (4) follows from repeated integration by parts.
\end{proof}

It follows from 
\eqref{eq:train} that
$$
\#(\sfl\cap \R'(X))=\#\{(s,t)\in \sfl: (s/\sqrt{X}, \sqrt{|a^2-b^2|}t/\sqrt{X})\in \Sab'\}.
$$
We will use the Poisson summation formula to examine this quantity. 
First we deduce from Lemma \ref{lem:volume} that $\mathcal{S}_{u}'$
in \eqref{eq:SBA'}
satisfies the hypotheses in Lemma \ref{lem:HB}. We deduce that
$$
\Sigma_- +\widetilde{\Sigma}_- \leq \#(\sfl\cap \R'(X))\leq 
\Sigma_+\big(1+O_N(H^{-N})\big),
$$
where
\begin{equation}
  \label{eq:bourgain}
  \Sigma_{\pm}=\sum_{(s,t)\in
  \sfl}w_{\pm}\Big(\frac{s}{\sqrt{X}},\frac{\sqrt{|a^2-b^2|}t}{\sqrt{X}}\Big), \quad
\widetilde{\Sigma}_{-}=\sum_{(s,t)\in
  \sfl}\widetilde{w}_{-}\Big(\frac{s}{\sqrt{X}},\frac{\sqrt{|a^2-b^2|}t}{\sqrt{X}}\Big).
\end{equation}
We will begin by examining the contribution from the sums $\Sigma_\pm$. 

Let us recall from 
\eqref{eq:lattice-1}  
that
$$
\sfl=\{ (s,t)\in \ZZ^2: \mbox{
$[\la_1',\ell ]\mid s$, $\la_2' \mid s-at$ and $\ell \mid
   t$}\},
$$
for odd  positive integers $\la_i', \ell$
such that $\lambda_i'\mid a^2+(-1)^ib^2$ and $\la_i'=k_i\la_i$.
In particular $\gcd(\la_1',\la_2')=\gcd(\la_i',ab)=1$. 
We will find it convenient to set
$
d_i:=\hcf(\ell,\la_i')$ and $\la_i'':=\la_i'/d_i$,
for $i=1,2$. For any $(s,t) \in \sfl$ we may therefore make the change of
variables
$$
s= \ell \la_1'' \sigma, \quad
s-at=\ell \la_2'' \tau.
$$
Recalling that $\ell\mid t$, we see that under this change of variables we have
\begin{equation}
  \label{eq:ias}
\la_1'' \sigma\equiv \la_2''\tau
\bmod{a},
\end{equation}
and there  is clearly a bijection between elements of $\sfl$ and
solutions to this congruence.  We therefore have
\begin{align*}
\Sigma_\pm
&=
\sum_{\substack{(\sigma,\tau)\in \ZZ^2\\
\mbox{\scriptsize\eqref{eq:ias} holds}}}
w_{\pm}\Big(\frac{
\ell \la_1'' \sigma}{\sqrt{X}},\frac{\sqrt{|a^2-b^2|}(
 \ell \la_1'' \sigma-\ell
\la_2'' \tau)}{a\sqrt{X}}\Big).
\end{align*}
say. Breaking the sum into residue classes modulo $a$, an
application of the Poisson summation formula yields
\begin{align*}
\Sigma_\pm
&=
\sum_{\substack{(\alpha,\beta)\bmod{a}\\
\la_1'' \alpha\equiv \la_2''\be
\bmod{a}}}
\sum_{\substack{(\sigma,\tau)\in \ZZ^2\\
\sigma\equiv \alpha \bmod{a}\\
\tau\equiv \be \bmod{a}}}
w_{\pm}\Big(\frac{\ell 
\la_1'' \sigma}{\sqrt{X}},\frac{\sqrt{|a^2-b^2|}(
 \ell \la_1'' \sigma-\ell\la_2'' \tau)}{a\sqrt{X}}\Big)\\
&=\frac{X}{a^2\ell^2\la_1''\la_2''}
\sum_{\substack{(\alpha,\beta)\bmod{a}\\
\la_1'' \alpha\equiv \la_2''\be
\bmod{a}}}
\sum_{(m,n)\in \ZZ^2}
e\Big(\frac{\al m+\be n}{a}\Big)
W_{\pm,b/a}\Big(\frac{m}{\la_1''T}, \frac{n}{\la_2''T}\Big),
\end{align*}
where
\begin{equation}
  \label{eq:TTT}
T:=\frac{a\ell}{\sqrt{X}},
\end{equation}
and for $\delta>0$ we have temporarily set
$$
W_{\pm,\delta}(x,y):=\int_{-\infty}^\infty\int_{-\infty}^\infty
w_\pm(u,\sqrt{|1-\delta^2|}(u-v))e(-ux-vy)\d u \d v.
$$
It is easy to see that
$W_{\pm,\delta}(x,y)=W_{\pm}(x+y,y/\sqrt{|1-\delta^2|})/\sqrt{|1-\delta^2|}$
in the notation of \eqref{eq:h-inn}.
We may therefore write 
\begin{align*} 
\Sigma_\pm &=\frac{Z}{a}
\sum_{\substack{(\alpha,\beta)\bmod{a}\\
\la_1'' \alpha\equiv \la_2''\be
\bmod{a}}}\hspace{-0.3cm}
\sum_{(m,n)\in \ZZ^2}
e\Big(\frac{\al m+\be n}{a}\Big)
W_{\pm}\Big(\frac{m/\la_1''+n/\la_2''}{T}, \frac{an}{\sqrt{|a^2-b^2|}\la_2''T}\Big),
\end{align*}
where 
\begin{equation}\label{eq:ZZ}
Z:=\frac{X}{\ell^2\la_1''\la_2''\sqrt{|a^2-b^2|}}
=\frac{X}{\det\sfl \sqrt{|a^2-b^2|}},
\end{equation}
by Lemma \ref{lem:det}.

It follows that 
$
\Sigma_\pm = \mcal{M}_\pm+\mcal{E}_\pm,
$
where $\mcal{M}_\pm=Z W_\pm(0,0)$ and 
$\mcal{E}_\pm$ is the overall contribution from non-zero vectors in
the summation over $(m,n)$.   Now it is clear from 
Lemmas \ref{lem:volume} and \ref{lem:HB}
that 
$$
\mcal{M}_\pm=Z\int_{\RR^2} w_\pm(\x)\d \x =
Z\vol(\mathcal{S}_{b/a}')+O\Big(\frac{Z}{\sqrt{H}}\Big)
=
\frac{\vol(\R'(X))}{\det \sfl}
+O\Big(\frac{Z}{\sqrt{H}}\Big).
$$
Recalling the definition \eqref{eq:XX} of $X$ and the expression for
$\det \sfl$ in Lemma \ref{lem:det}, we see that the error
term here contributes 
\begin{align*}
&\ll\frac{B}{\sqrt{H}}
\sum_{(a,b)\in \A_1} 
\Osum_{\substack{k_1\lambda_1\mid a^2-b^2\\k_1\leq K}}
\Osum_{\substack{k_2\lambda_2\mid a^2+b^2\\k_2\leq K}}
\sum_{\nu}
\Osum_{\ell \leq  Z_1^{c/10}}
\frac{\gcd(\ell,k_1k_2\la_1\la_2)2^\nu}{\ell^2k_1k_2\max\{a,b\}\sqrt{|a^2-b^2|}}.
\end{align*}
Carrying out the inner 
sums over $\ell$ and $\nu\leq Z_2$, we deduce from 
\eqref{eq:exp} that these contribute 
$$
\ll \sum_{\nu}\frac{\tau(k_1k_2\la_1\la_2)2^\nu}{
k_1k_2\max\{a,b\}\sqrt{|a^2-b^2|}} \ll 
\frac{\tau(k_1k_2\la_1\la_2)\log B}{
k_1k_2\max\{a,b\}^{3/2}|a-b|^{1/2}},
$$
since $|a^2-b^2|\geq \max\{a,b\}|a-b|$.
Hence the error term contributes
\begin{align*}
&\ll\frac{B\log B}{\sqrt{H}}
\sum_{(a,b)\in \A_1} 
\frac{\tau(|a^4-b^4|)}{\max\{a,b\}^{3/2}|a-b|^{1/2}}
\ll\frac{B(\log B)^5}{\sqrt{H}},
\end{align*}
by \eqref{eq:anna-swim'}, 
once summed over all of the remaining parameters.
This is satisfactory for Lemma \ref{lem:exp} if $H\geq (\log B)^4$,
which we now assume.

Let us turn to the estimation of $\mcal{E}_\pm$, observing that
$$
\sum_{\substack{(\alpha,\beta)\bmod{a}\\
\la_1'' \alpha\equiv \la_2''\be
\bmod{a}}}
e\Big(\frac{\al m+\be n}{a}\Big)=
\begin{cases}
a, & \mbox{if $n\la_1''+m\la_2''\equiv 0 \bmod{a}$,}\\
0, & \mbox{otherwise}.
\end{cases}
$$
We may therefore write
\begin{align*}
\mcal{E}_\pm
&=Z
\sum_{\substack{(m,n)\in \ZZ^2\setminus\{\mathbf{0}\}\\
n\la_1''+m\la_2''\equiv 0 \bmod{a}}}
W_{\pm}\Big(\frac{m/\la_1''+n/\la_2''}{T}, \frac{an}{\sqrt{|a^2-b^2|}\la_2''T}\Big),
\end{align*}
where $T$ is given by \eqref{eq:TTT} and $Z$ by \eqref{eq:ZZ}.
We may assume that $m,n$ are both $O(B^c)$ for a suitable 
absolute constant $c>0$ by the rapid decay properties enjoyed by the
function $W_{\pm}$, as described in 
Lemma \ref{lem:HB}.

We begin by considering the overall contribution from terms with $m=0$
in this summation, which forces $n$ to be divisible by  $a$.
We deduce from taking $N=1$ in 
part (4) of Lemma~\ref{lem:HB} 
that $W_\pm(x,y)\ll H^2/|y|$, whence this part of the sum is
\begin{align*}
Z
\sum_{\substack{n'\in \ZZ\setminus\{0\}\\ n'\leq B^c}}
W_{\pm}\Big(\frac{an'}{\la_2''T},
\frac{a^2n'}{\sqrt{|a^2-b^2|}\la_2''T}\Big) 
&\ll
\frac{H^2\la_2''ZT\log B}{a}.
\end{align*}
Recalling the definitions of $T,X,Z$ from \eqref{eq:TTT},
\eqref{eq:XX} and \eqref{eq:ZZ} and noting 
from \eqref{eq:AA} that $|a-b|\geq Z_2^{-2}\max\{a,b\}$, we see that this is 
\begin{align*}
&\ll
\frac{\gcd(\ell,k_1\la_1)}{\ell} \cdot \sqrt{\frac{\la_2}{\la_1}}
\cdot \frac{H^2 2^{\nu/2} Z_2\sqrt{B}\log B}{k_1\max\{a,b\}^{3/2}}. 
\end{align*}
It follows from the statement of Lemma \ref{lem:final_prelim} 
and the definition of \eqref{eq:VV} that
$$
\frac{k_2\la_2}{k_1\la_1}\leq \frac{B}{\max\{a,b\}K}.
$$
Hence the above sum is
$$
\ll
\frac{\gcd(\ell,k_1\la_1)}{\ell \sqrt{k_1k_2}} \cdot \frac{H^2 2^{\nu/2} Z_2
B\log B}{\max\{a,b\}^2\sqrt{K}}.
$$
Note that 
$\sum_{\ell\leq L} |\mu(\ell)|\gcd(\ell,n)\ell^{-1}\ll 2^{\omega(n)}\log L$ for any
non-zero integer $n$ and $2^{Z_2/2}Z_2\ll \log B$. 
Moreover by checking the inequality at prime powers we see that
$$
\sum_{k\la \mid n}\frac{|\mu(k)|2^{\omega(k\la)}}{\sqrt{k}}\leq
\tau_5(n).
$$
Once inserted into Lemma \ref{lem:final_prelim},  we therefore obtain
the overall contribution
\begin{equation}\label{eq:lemon}
\begin{split}
&\ll \frac{H^2 B(\log B)^3}{\sqrt{K}}
\sum_{(a,b)\in \A_1}
\sum_{k_1\lambda_1\mid a^2-b^2}
\sum_{k_2\lambda_2\mid a^2+b^2}
\frac{|\mu(k_1)\mu(k_2)|2^{\omega(k_1k_2\la_1\la_2)}}{\sqrt{k_1k_2}\max\{a,b\}^2}\\
&\ll \frac{H^2 B(\log B)^3}{\sqrt{K}}
\sum_{(a,b)\in \A_1} 
\frac{\tau_5(|a^4-b^4|)}{\max\{a,b\}^2}\\
&\ll \frac{H^2B(\log B)^{16}}{\sqrt{K}},
\end{split}
\end{equation}
by Lemma~\ref{lem:nair}.
In view of \eqref{eq:K} this is satisfactory for Lemma \ref{lem:exp}
if $H\leq (\log B)^{6}$. 
Similarly, the overall contribution from 
terms with $n=0$ is seen to be satisfactory. 
In view of our earlier constraints on $H$ we are led to take the value 
$$
H=(\log B)^6
$$
in our construction of the weight functions $w_{\pm}$.

Next we consider the contribution from terms with
$m\la_2''+n\la_1''=0$.  The general solution of this equation is
$(m,n)=k(-\la_1'',\la_2'')$ for non-zero integer $k\leq B^c$. 
Hence part (4) of Lemma \ref{lem:HB} gives the contribution  
\begin{align*}
Z
\sum_{\substack{k\in \ZZ\setminus\{0\}\\ k\leq B^c}}
W_{\pm}\Big(0, \frac{ak}{\sqrt{|a^2-b^2|}T}\Big)
&\ll
\frac{H^2\sqrt{|a^2-b^2|}ZT\log B}{a}\\
&\ll
\frac{\gcd(\ell,k_1\la_1k_2\la_2)}{\ell} \cdot
\frac{1}{\sqrt{\la_1\la_2}} \cdot \frac{H^2 2^{\nu/2}Z_2 \sqrt{B}\log
  B}{k_1k_2\max\{a,b\}^{1/2}} 
\end{align*}
from this part of the sum. It follows from \eqref{eq:VV} that
$$
\frac{1}{k_1\la_1k_2\la_2}\leq \frac{B}{K\max\{a,b\}^3}.
$$
Hence the contribution is
$$
\ll
\frac{\gcd(\ell,k_1\la_1k_2\la_2)}{\ell \sqrt{k_1k_2}} \cdot \frac{H^2 2^{\nu/2}Z_2 B\log B}{\max\{a,b\}^{2}\sqrt{K}},
$$
which as previously once 
we insert it into Lemma \ref{lem:final_prelim} gives the 
satisfactory overall contribution \eqref{eq:lemon}.
In the same fashion one deduces that there is a satisfactory overall
contribution to the sum from vectors $(m,n)$ for which  
$(a^2-b^2)m/\la_1''=(a^2+b^2)n/\la_2''$.

In what follows we may therefore approximate 
$\mcal{E}_\pm$ by the corresponding sum $\mcal{E}_\pm'$, say, 
in which 
$$
mn\neq 0, \quad
m\la_2''+n\la_1''\neq 0, \quad
\frac{(a^2-b^2)m}{\la_1''}\neq \frac{(a^2+b^2)n}{\la_2''},
$$
with a satisfactory error. 
Now it is clear that 
$n\la_1''+m\la_2''\equiv 0 \bmod{a}$
if and only if
$n\ell\la_1''+m\ell\la_2''=a\ell t'$ for some integer $t'$.  
Let us write $s=\ell\la_1''n$ and $t=\ell t'$. Then we have 
\begin{align*}
\mcal{E}_\pm'
&=Z
\sum_{\substack{(s,t)\in \sfl\\
st(s-at)\neq 0\\
s/t\neq (a^2-b^2)/(2a)}}
W_{\pm}\Big(\frac{at}{\ell \la_1''\la_2''T}, \frac{as}{\sqrt{|a^2-b^2|}\ell \la_1''\la_2''T}\Big).
\end{align*}
We may freely assume that $|s|,|t|\leq B^{3/2}$ in this summation by
the fast decay 
properties of $W_\pm$ recorded in Lemma \ref{lem:HB}.

Recall the definition of the quadratic forms $Q_j(s,t)$ from \eqref{eq:Qi} for
$1\leq j\leq 3.$ We would now like to show
that there is a negligible contribution from $(s,t)$ for which one of
these forms vanishes. 
If $Q_1(s,t)=0$ (resp. $Q_2(s,t)=0$) then 
there exists an integer $m$ for which $a^2+b^2=2m^2$ 
(resp. $a^2-b^2=2m^2$) and $s/t=a\pm
m$ (resp. $s/t=\pm m$). 
Thus we have $(s,t)=k(a\pm m,1)$ (resp. 
$(s,t)=k(\pm m,1)$) 
for non-zero integer $k$
satisfying $|k|\leq B^{3/2}\max\{a,b\}\leq B^{2}.$
Furthermore the conditions of summation imply that $\ell\mid k$.
Hence the contribution 
to $\mcal{E}_\pm'$ from $Q_1(s,t)Q_2(s,t)=0$ is 
\begin{equation}\label{eq:sloe}
\ll \frac{H^2\ell \la_1''\la_2''ZT}{a}
\sum_{\substack{0<|k|\leq B^2\\
\ell\mid k}}
\frac{1}{k}.
\end{equation}
The contribution from $(s,t)$ such that $Q_3(s,t)=0$ is more subtle
but ultimately  contributes the same sort of
quantity. Indeed if  $Q_3(s,t)=0$ then 
there exists coprime $m_1,m_2\in \NN$ such that 
$$
b^2-a^2=2^\kappa m_1^2, \quad
b^2+a^2=2^\kappa m_2^2, 
$$
where $\kappa=1$ if $2\nmid ab$ and $\kappa=0$ otherwise.
In particular we have $2a^2=2^{\kappa}(m_2^2-m_1^2)$.
Note that if  $\kappa=0$ then  $2\mid a$ and $2\nmid m_1m_2$.
Hence there exists coprime $a_1,a_2\in \NN$ such that 
$$
m_2-m_1=2^{2(1-\kappa)}a_1^2, \quad 
m_2+m_1=2^{1-\kappa}a_2^2,
$$
with $2^{1-\kappa}a_1a_2=a$. The equation $Q_3(s,t)=0$ yields
$t(a^2-b^2)(2s-at)=2as^2$. Since $a$ is coprime to $a^2-b^2$ it
follows that $2^\kappa m_1^2\mid 2s^2$, whence $m_1 \mid s$.
Writing $s=m_1s'$ we deduce that 
$$
s'/t=-2^{\kappa-1}(m_1\pm m_2)/a.
$$
Note that 
$-2^{\kappa-1}(m_1 - m_2)/a= a_1/a_2$ 
and 
$-2^{\kappa-1}(m_1 + m_2)/a= 2^{\kappa-1}a_2/a_1$.
In the former case we deduce that $(s,t)=k(m_1a_1,a_2)$ and in the
latter case $(s,t)=k(2^{\kappa-1}m_1a_2,a_1)$ for a non-zero integer
$k$. But then the conditions of summation imply that $\ell\mid k$ and
so we end up with the same sum \eqref{eq:sloe} for the contribution 
to $\mcal{E}_\pm'$ from $Q_3(s,t)=0$.

Recall the definitions 
of $T,X,Z$ from \eqref{eq:TTT},
\eqref{eq:XX} and \eqref{eq:ZZ}.
Furthermore, 
recall from \eqref{eq:VV}  that $\la_1\la_2\leq \max\{a,b\}B/K$. 
It therefore follows that \eqref{eq:sloe} has size
\begin{align*}
\ll
\frac{H^2 \la_1''\la_2''ZT\log B}{a}
&\ll \frac{H^2\sqrt{X}\log B}{\ell \sqrt{|a^2-b^2|}}\\
&\ll \frac{2^{\nu/2}H^2\sqrt{B}(\log B)\sqrt{\la_1\la_2}}{\ell
  \max\{a,b\}^{1/2}\sqrt{|a^2-b^2|}}\\
&\ll \frac{H^2B(\log B)^{2}}{\sqrt{K}\ell \max\{a,b\}},
\end{align*}
since 
$$
2^{\nu/2}\sqrt{\frac{\max\{a,b\}}{|a-b|}}\ll \log B.
$$
We now need to sum this over the remaining parameters
$a,b,k_i,\la_i,\ell$.  Summing first over $\ell$ we obtain the overall
contribution
\begin{align*}
&\ll \frac{H^2B(\log B)^{3}}{\sqrt{K}}
\sum_{i,j\in \{0,1\}}
\sum_{\substack{0<a,b,m\leq \sqrt{B}\\
a^2+(-1)^i b^2=2^j m^2}}
\frac{\tau_3(|a^4-b^4|)}{ \max\{a,b\}}\ll
\frac{H^2B(\log B)^{29}}{\sqrt{K}},
\end{align*}
by Lemma \ref{lem:nair''}. Our choice \eqref{eq:K} of $K$ ensures that
this is satisfactory.

We may therefore approximate  $\mcal{E}_\pm'$ by
\begin{align*}
Z
\sum_{\substack{(s,t)\in \sfl\\
st(s-at)\neq 0, ~Q_j(s,t)\neq 0\\
s/t\neq (a^2-b^2)/(2a)}}
W_{\pm}\Big(\frac{at}{\ell \la_1''\la_2''T}, \frac{as}{\sqrt{|a^2-b^2|}\ell \la_1''\la_2''T}\Big)
\end{align*}
with satisfactory error.
Taking $N=2$ in 
part (4) of Lemma \ref{lem:HB} we observe that the above sum is 
$O(\mcal{E}_\pm'')$, with 
\begin{equation}\label{eq:green}
\mcal{E}_\pm'':=
\frac{H^4 Z(\ell \la_1''\la_2''T)^2}{
a^2} 
\sum_{\substack{(s,t)\in \sfl\\
st(s-at)\neq 0, ~Q_j(s,t)\neq 0\\
s/t\neq (a^2-b^2)/(2a)}} \min\Big\{\frac{\max\{a,b\}^2}{s^2},\frac{1}{t^2}\Big\}.
\end{equation}
Let $R, A,L_1,L_2>0$. We will estimate the overall contribution from 
$\mcal{E}_{\pm}''$ once inserted into Lemma \ref{lem:final_prelim}, with 
\begin{equation}\label{eq:shang}
A\leq \max\{a,b\}  <2A, \quad L_i\leq \la_i< 2L_i, \quad 
R<\max\{|s|/A,|t|\}\leq 2R.
\end{equation}
We call this contribution
$E=E_\pm(R,A,L_1,L_2)$.  
Recall that $|s|,|t|\leq B^{3/2}$ in the sum by the fast decay
properties of the weights $W_\pm$. 
In particular we may 
clearly assume that
$$
R, A,L_1,L_2\gg 1, \quad R\leq B^{3/2}, \quad
A\leq \sqrt{B}/Z_1^c, \quad L_i\leq 8A^2.
$$
We will show that 
\begin{equation}
   \label{eq:nuke}
   E\ll  \frac{BZ_1^{O(1)}}{Z_1^c}.
\end{equation}
Once summed over the $O((\log B)^4)$ dyadic ranges for $R,A,L_1,
L_2$, this will clearly suffice to complete our treatment of the error
term in Lemma \ref{lem:exp}.

Recall from \eqref{eq:AA} 
that $\min \{a,b,|a-b|\}\geq \max\{a,b\}/Z_2^2$, so that 
$$
\frac{A}{Z_2^2}\leq\min \{a,b,|a-b|\}\leq 2A.
$$
Furthermore we have $k_1,k_2\leq K$. 
Thus we see that the contribution to 
our estimate for $\mcal{E}_{\pm}''$ from $s, t, \la_i$ and $a$
restricted to lie in the above dyadic intervals is 
\begin{align*}
&\ll \frac{H^4(\log B)^{\ve}
Z(\ell \la_1''\la_2''T)^2}{
A^2R^2} \#\left\{
(s,t)\in \sfl:
\begin{array}{l}
\mbox{$Q_j(s,t)\neq 0$ for $1\leq j\leq 3$},\\
st(s-at)\neq 0,\\
s/t\neq (a^2-b^2)/(2a),\\
|s|\leq 2AR, ~|t|\leq 2R
\end{array}
\right\}.
\end{align*}
Recalling the definitions of $Z$ and $T$,  
the factor outside the cardinality is seen to be
\begin{align*}
\frac{H^4(\log B)^{\ve}
(\ell \la_1''\la_2'')^2}{
A^2R^2} \cdot ZT^2 
&\ll
\frac{H^4(\log B)^{\ve}
(\ell \la_1''\la_2'')^2}{
A^2R^2} \cdot 
\frac{X}{\ell^2\la_1''\la_2''\sqrt{|a^2-b^2|}}
\cdot
\frac{a^2\ell^2}{X}\\
&\ll
\frac{H^4 (\log B)^{\ve}
\ell^2\la_1''\la_2''}{A R^2}.
\end{align*}
Note that $\la_i''\leq k_i\la_i\ll KL_i$
and $|Q_i(s,t)|\ll A^2R^2$ if 
$|s|\leq 2AR$ and $|t|\leq 2R$.
Hence the contribution under consideration is 
\begin{align*}
&
\ll \frac{H^4\ell^2K^2L_1L_2(\log B)^{\ve}}{AR^2}
 \#\big(\sfl\cap \R^\dagger(cA^2R^2)\big),
\end{align*}
for some absolute constant $c>0$, in the notation of \eqref{eq:R-dag}.
Since
$\ell\leq Z_1^{c/10}$ and $H=(\log B)^6\leq Z_1^{O(1)}$ 
it now follows that
\begin{align*}
 E&\ll 
\frac{L_1L_2Z_1^{c/5+O(1)}}{AR^2}
\sum_{\substack{
(a,b)\in \NN^2\\
\cp{a}{b}, ab\neq 1\\
\max\{a,b\}\leq 2A}}
\Osum_{\substack{k_i\lambda_i\mid a^2+(-1)^ib^2\\ k_i\leq K}}
\Osum_{\ell \in \NN}
 \#\big(\sfl\cap \R^\dagger(cA^2R^2)\big),
\end{align*}
To analyse this sum we observe that 
$
\#\big(\sfl\cap \R^\dagger(cA^2R^2)\big)
\leq L^\dagger_{k_1,k_2,\ell}(Y,1/2),
$
with 
$$
\frac{A^3R^2}{L_1L_2}\ll Y\ll \frac{A^3R^2}{L_1L_2}.
$$
Here $Y\gg 1 $  since
$\la_1\leq |s|\ll AR$ and $\la_2\leq |s-at|\ll AR$, whence 
$
L_1L_2\ll A^2R^2
$
in order for the summand not to vanish. It is trivial to see that
$Y\ll B^5.$ It therefore follows from taking $T=1/2$ in 
Lemma \ref{lem:large-gcd} that 
\begin{align*}
 E \ll 
\frac{L_1L_2Z_1^{c/5+O(1)}}{AR^2}
\cdot Y \ll A^2Z_1^{c/5+O(1)}\ll \frac{BZ_1^{O(1)}}{Z_1^{9c/5}},
\end{align*}
which thereby establishes \eqref{eq:nuke}.

Our final task is to deal with the contribution from the sum
$\widetilde{\Sigma}_-$ in \eqref{eq:bourgain}. 
Applying the estimate for $\widetilde{w}_-$ in Lemma \ref{lem:HB} we deduce
that
$$
\widetilde{\Sigma}_-
\ll \frac{e^{-H}X}{|a^2-b^2|}  
\sum_{\substack{(s,t)\in \sfl}}
\min\Big\{\frac{\max\{a,b\}^2}{s^2},\frac{1}{t^2}\Big\}. 
$$ 
The same argument allows us to restrict attention to $s,t$ for which 
$st(s-at)\neq 0$, $Q_j(s,t)\neq 0$ and 
$s/t\neq (a^2-b^2)/(2a)$. But then we are led to the upper bound 
$$
\widetilde{\Sigma}_-
\ll \frac{e^{-H}X a^2}{|a^2-b^2|H^4Z (\ell \la_1'' \la_2''T)^2} \cdot
\mathcal{E}_{\pm}'' +B(\log B)^3(\log\log B)^2,   
$$
in the notation of \eqref{eq:green}.
As there we 
will estimate the overall contribution 
$\widetilde{E}=\widetilde{E}(R,A,L_1,L_2)$, say, 
from the first term for the dyadic intervals \eqref{eq:shang}. 
Running through the treatment of $E$ we see that the first term
is
\begin{align*}
&\ll \frac{e^{-H}X a^2}{|a^2-b^2|H^4Z (\ell \la_1'' \la_2''T)^2} \cdot
\frac{H^4(\log B)^{\ve}
Z(\ell \la_1''\la_2''T)^2}{
A^2R^2} \cdot 
\#\big(\sfl\cap \R^\dagger(cA^2R^2)\big)\\
&\ll \frac{e^{-H}(\log B)^\ve X }{A^2R^2} \cdot  \#\big(\sfl\cap
\R^\dagger(cA^2R^2)\big)
\\
&\ll \frac{e^{-H}B(\log B)^{1+\ve}L_1L_2  }{A^3R^2} \cdot  \#\big(\sfl\cap
\R^\dagger(cA^2R^2)\big)
\\
\end{align*}
for some absolute constant $c>0$, on substituting the definition
\eqref{eq:XX} of $X$. 
Applying  Lemma \ref{lem:large-gcd} as previously we therefore deduce that 
\begin{align*}
\widetilde{E}
&\ll \frac{e^{-H}B(\log B)^{1+\ve}L_1L_2  }{A^3R^2}
\cdot 
\frac{A^3R^2}{L_1L_2}
\ll e^{-H}B(\log B)^{1+\ve},
\end{align*}
which therefore makes a satisfactory contribution for 
$H=(\log B)^6$. This completes the proof of Lemma \ref{lem:exp}.

\section{The main term}\label{s:3}

We now draw together the various main terms that appear
in Lemmas \ref{lem:L0}, \ref{lem:L1} and~\ref{lem:L2}, and insert them
into Lemma \ref{lem:final_prelim}'s estimate for $N_1(B)$.

By abuse of notation
we will merely equate $L^\nu(B)=\widetilde{L}_{k_1,k_2,\ell}(B)$ with
the main term in the various
estimates from \S \ref{s:asy}.
Recall the definition~\eqref{eq:pi} of $\pi_{ab}$.
We may clearly bring the summation over $\nu$ to 
the innermost sum, finding that
\begin{align*}
\sum_{\nu \leq \max\{1,\nu_2(a^2-b^2)\}}
L^\nu(B)&=
\Big(
\pi_{ab}\Big(\frac{1}{2}-\frac{1}{2^{\lfloor 2Z_2 \rfloor}}\Big)
+
1-\frac{\pi_{ab}}{2}
+\pi_{ab+1}(\nu_2(a^2-b^2)-3)\\
&\qquad\qquad\qquad\qquad+
\pi_{ab+1}\Big(2-\frac{2^{\nu_2(a^2-b^2)}}{2^{\lfloor 2Z_2 \rfloor+2}}\Big)
\Big)\frac{\vol(\R_0')}{\det \sfl}
\\
&=
\big(\delta_{a,b}+O(2^{-Z_2})\big)
\frac{\vol(\R_0')}{\det \sfl},
\end{align*}
where if
$g=h*\tau=h*1*1$ is given by \eqref{eq:g} and \eqref{eq:h}, then
\begin{align*}
   \delta_{a,b}
:=\max\{1, \nu_2(a^2-b^2)\}=g(2^{\nu_2(a^4-b^4)}).
\end{align*}
Note here that $\nu_2(a^4-b^4)=\nu_2(a^2-b^2)+1$ if
$\nu_2(a^2-b^2)\neq 0$ and 
$\nu_2(a^4-b^4)=0$ otherwise. 
Moreover, we have used the fact that $\nu_2(a^2-b^2)\leq Z_2$ in
controlling the error term. We may therefore deduce from
Lemma~\ref{lem:volume} that
$$
\sum_{\nu \leq \max\{1,\nu_2(a^2-b^2)\}}L^\nu(B)=
\Big(\delta_{a,b}+O\big(\frac{1}{\log B}\big)\Big)
\frac{\la_1\la_2B
   \vol(\Sab)}{(\det \sfl)\max\{a,b\}\sqrt{|a^2-b^2|}},
$$
where $\mathcal{S}_u$ is given by \eqref{eq:SBA} for
any positive $u\neq 1$.

Define
\begin{equation}
   \label{eq:func}
f(u):=\frac{\vol(\mathcal{S}_u)+\vol( \mathcal{S}_{1/u})}{\sqrt{1-u^2}},
\end{equation}
for any $u\in (0,1)$.
Let $\chi(t_1,t_2;R)$ be given by \eqref{eq:chi} and 
for $Y\geq 1$ let
\begin{equation}
   \label{eq:hi}
h(a,b;Y):=
\sum_{\substack{n\mid a^4-b^4\\
d_i=\gcd(n,a+(-1)^ib)\\
     ~d_3=\gcd(n,a^2+b^2)}}
(1*h)(n)\chi(d_1d_2,d_3;Y).
\end{equation}
We deduce from \eqref{eq:h} that 
$(1*h)(p^\nu)\leq 1$, so that 
$h(a,b;Y)\leq \tau(|a^4-b^4|)$.
We are now ready to establish the following result.

\begin{lemma}\label{lem:farine}
We have
\begin{align*}
N_1(B)&\leq \frac{8B}{3\zeta(2)}
\sum_{(a,b)\in \A_2}\frac{f(b/a)h(a,b;B2^{Z_2+1}/K)}{a^2}
  +O\Big(\frac{B(\log
   B)^4}{\log\log B}\Big)\\
N_1(B)&\geq \frac{8B}{3\zeta(2)} 
\sum_{(a,b)\in \A_2}\frac{f(b/a)h(a,b;B2^{-(Z_2+1)}/K)}{a^2}
  +O\Big(\frac{B(\log
   B)^4}{\log\log B}\Big),
\end{align*}
where $h(a,b;Y)$ is given by \eqref{eq:hi} and
$$
\A_2:=
\left\{(a,b)\in \NN^2:
a/Z_2^2\leq b \leq a(1-1/Z_2^2), ~a < \sqrt{B}, ~\cp{a}{b}
\right\}.
$$
\end{lemma}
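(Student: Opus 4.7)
The plan is to substitute the asymptotic formulas for $\widetilde L_{k_1,k_2,\ell}(B)=L^\nu(B)$ provided by Lemmas~\ref{lem:L0}--\ref{lem:L2} into the expression for $N_1(B)$ in Lemma~\ref{lem:final_prelim}. The error terms $E^0(B)$, $E_\alpha^\nu(B)$, $E_\beta^\nu(B)$ are of the shape controlled by $E(B)$ in Lemma~\ref{lem:exp}; since the embedded sums over $\sigma,i,j$ in their definitions are of length $O(\log\log B)$, one absorbs an extra logarithmic factor into the bound of Lemma~\ref{lem:exp} and the total error contribution is $O(B(\log B)^3(\log\log B)^3)$, which is dominated by $O(B(\log B)^4/\log\log B)$. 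Summing the explicit main terms over $\nu\leq\max\{1,\nu_2(a^2-b^2)\}$, as in the paragraph preceding the lemma statement, the coefficients collapse to $(\delta_{a,b}+O(2^{-Z_2}))\vol(\R_0')/\det\sfl$, where $\delta_{a,b}=g(2^{\nu_2(a^4-b^4)})$ with $g$ as in \eqref{eq:g}; the $O(2^{-Z_2})$ tail contributes at most $O(B(\log B)^4/\log\log B)$, again acceptable.

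Next I apply Lemma~\ref{lem:volume} to expand $\vol(\R_0')=\la_1\la_2 B\vol(\Sab)/(\max\{a,b\}\sqrt{|a^2-b^2|})$ and Lemma~\ref{lem:det} to write $\det\sfl=k_1k_2\la_1\la_2\ell^2/\gcd(m_1m_2,\ell)$, with $m_i:=k_i\la_i$. Crucially, the factors of $\la_1\la_2$ cancel, so the main term depends on $(k_i,\la_i)$ only through $m_i$ and the outer prefactor $1/(k_1k_2)$. I then extend the ranges of the M\"obius sums: enlarging $\ell\leq Z_1^{c/10}$ to all odd $\ell$ costs $O(Z_1^{-c/10})$ per inner term, while enlarging $k_i\leq K$ to all $k_i\mid m_i$ costs at most $O(B(\log B)^{O(1)}K^{-1/2})$ via a trivial divisor bound combined with Lemma~\ref{lem:nair}, both negligible. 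The Euler products evaluate to
$$
\sum_{\ell\text{ odd}}\frac{\mu(\ell)\gcd(m,\ell)}{\ell^2}=\frac{4}{3\zeta(2)\phi^\dagger(m)},\qquad
\sum_{k\mid m}\frac{\mu(k)}{k}=\phi^*(m),
$$
and combined via the coprimality of $m_1,m_2$ in the odd part together with the identity $\phi^*(p)/\phi^\dagger(p)=(1*h)(p)$ for $p>2$ read off from \eqref{eq:h}, the resulting inner factor simplifies to $\frac{4}{3\zeta(2)}(1*h)(m_1m_2)$.

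Finally, I reindex via $n=m_1m_2$, exploiting the pairwise-coprime factorisation $a^4-b^4=(a-b)(a+b)(a^2+b^2)$ in the odd part (valid because $\gcd(a,b)=1$) to decompose every odd divisor uniquely as $n=d_1d_2d_3$ with $d_1\mid a-b$, $d_2\mid a+b$, $d_3\mid a^2+b^2$, which matches the summation in \eqref{eq:hi}. Pairing $(a,b)\leftrightarrow(b,a)$ inside $\A_1$ restricts to $a>b$, and combining $\vol(\Sab)$ with $\vol(\mathcal{S}_{a/b})$ yields the factor $f(b/a)/a^2$ from \eqref{eq:func}. The $2$-adic part of $\delta_{a,b}$ is incorporated via the identity $g=1*(1*h)$: the expansion $\delta_{a,b}=\sum_{0\leq j\leq\nu_2(a^4-b^4)}(1*h)(2^j)$ reinstates the $2$-adic divisors of $n$, but the resulting indicator $\chi_{d_1d_2,d_3}(Y)$ then differs from the odd-divisor indicator $\chi_{m_1,m_2}(B/K)$ by a rescaling of its two arguments by powers of $2$ of size at most $2^{Z_2+1}$; the upper and lower bounds claimed in the statement arise by sandwiching this rescaling between $Y=B\cdot 2^{Z_2+1}/K$ and $Y=B/K$ in the defining conditions of $V_{a,b}(Y)$. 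The passage from $\A_1$ to $\A_2$ enlarges the range to $a<\sqrt{B}$ and drops the restriction $\nu_2(a^2-b^2)\leq Z_2$; the added range $\sqrt{B}/Z_1^c\leq a<\sqrt{B}$ contributes $O(B(\log B)^3\log Z_1)=O(B(\log B)^4/\log\log B)$ via Lemma~\ref{lem:ab-small} with $T=Z_1^c$, and the terms with $\nu_2(a^2-b^2)>Z_2$ are already covered by the $2^{-Z_2}$ tail controlled in the first paragraph. The main obstacle is precisely the last step: reconciling the $2$-adic data encoded in $\delta_{a,b}$ with the region $V_{a,b}(Y)$ appearing in $\chi_{d_1d_2,d_3}$ uniformly in all parameters, which forces the statement to take the form of matched upper and lower bounds rather than a single asymptotic.
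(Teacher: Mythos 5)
Your proposal follows the paper's own route essentially step for step: summing the main terms of Lemmas \ref{lem:L0}--\ref{lem:L2} over $\nu$ to produce $\delta_{a,b}=g(2^{\nu_2(a^4-b^4)})$, invoking Lemmas \ref{lem:det}, \ref{lem:volume} and \ref{lem:exp}, completing the $\ell$- and $k_i$-sums to obtain the factor $\tfrac{4}{3\zeta(2)}\,\phis/\phid=\tfrac{4}{3\zeta(2)}(1*h)$, reindexing odd divisors through the pairwise coprime factorisation of the odd part of $a^4-b^4$, sandwiching the $2$-adic discrepancy between $Y=B/K$ and $Y=B2^{Z_2+1}/K$, and folding the cases $a>b$ and $a<b$ to produce $f(b/a)/a^2$. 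All of that is sound and matches the paper.

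The one genuine gap is your treatment of the pairs with $\nu_2(a^2-b^2)>Z_2$ when you pass from $\A_1$ to $\A_2$. Such pairs are excluded from $\A_1$ by \eqref{eq:AA}, so nothing in your first paragraph --- in particular not the $O(2^{-Z_2})$ error in $\sum_\nu L^\nu(B)=(\delta_{a,b}+O(2^{-Z_2}))\vol(\R_0')/\det\sfl$, which is only established for $(a,b)\in\A_1$ --- says anything about them. Enlarging the summation set adds a brand new main-term contribution of size roughly $B\sum f(b/a)h(a,b;Y)/a^2$ over these pairs, and since $\delta_{a,b}=\max\{1,\nu_2(a^2-b^2)\}$ is large exactly on this set, the required bound is not automatic: one must argue as in Lemma \ref{lem:red-3}, inserting $1\le \nu_2(a^2-b^2)/Z_2$, absorbing $\nu_2\cdot\tau$ into the modified divisor function $\tau''$, and running the argument of \eqref{eq:anna-swim} and \eqref{eq:anna-swim'} to extract the saving of $1/Z_2=1/\log\log B$. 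You need to supply this (or an equivalent) estimate rather than attribute it to the $2^{-Z_2}$ tail. A much smaller quibble: for the added range $\sqrt{B}/Z_1^c\le a<\sqrt{B}$ you cite Lemma \ref{lem:ab-small}, but at this stage the summand is $Bf(b/a)h(a,b;Y)/a^2$ rather than $M_{a,b}(B)$; the correct tool is the bound $h(a,b;Y)\le\tau(|a^4-b^4|)$ combined with Lemma \ref{lem:nair} on dyadic ranges, which yields the same $O(B(\log B)^3\log Z_1)=O(B(\log B)^4/\log\log B)$.
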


\begin{proof}
Let us write 
$$
\widetilde{\delta_{a,b}}=\delta_{a,b}+O\Big(\frac{1}{\log B}\Big)=
\delta_{a,b}\Big( 1+O\Big(\frac{1}{\log B}\Big)\Big).
$$
Bringing together our expression for $\sum_\nu L^\nu(B)$ with Lemmas
\ref{lem:det}, \ref{lem:final_prelim} and \ref{lem:exp}, we deduce
that $N_1(B)$ can be replaced by
\begin{align*}
2B
\sum_{(a,b)\in \A_1}
\widetilde{\delta_{a,b}}
F(a,b)
\Osum_{\substack{k_1\lambda_1\mid a^2-b^2\\ k_1\leq K}}
\Osum_{\substack{k_2\lambda_2\mid a^2+b^2\\ k_2\leq K}}
&\chi\Big(k_1\la_1,k_2\la_2; 
\frac{B}{K}\Big)
\frac{\mu(k_1)\mu(k_2)}{k_1k_2}\sigma(Z_1^{c/10}),
\end{align*}
with an acceptable error, where $K$ is given by \eqref{eq:K},
$\chi(t_1,t_2;R)$ is given by \eqref{eq:chi},
$F(a,b):=\vol(\Sab)/(\max\{a,b\}\sqrt{|a^2-b^2|})$
  and
\begin{align*}
\sigma(T)&:= \Osum_{\ell \leq  T}
  \mu(\ell)\frac{\gcd(k_1k_2\la_1\la_2 ,\ell) 
}{\ell^2},
\end{align*}
for any $T\geq 1$. Extending the summation to infinity  we easily deduce that 
\begin{align*}
\sigma(T)
-\frac{4}{3\zeta(2)\phid(k_1k_2\la_1\la_2 
)}\ll 
\sum_{\ell> T}
  \frac{\gcd(k_1k_2\la_1\la_2 ,\ell) 
}{\ell^2}
&\ll
\frac{\tau(k_1k_2\la_1\la_2)}{T},
\end{align*}
where $\phid$ is given by \eqref{eq:funcp}.
  Lemma~\ref{lem:volume} implies that $\vol(\Sab)\ll 1$
and the definition \eqref{eq:AA} of $\A_1$ yields $
\sqrt{|a^2-b^2|}\geq \max\{a,b\}/Z_2$.
Hence the overall contribution from the above error term is 
$BZ_1^{O(1)}/Z_1^{c/10}$,
which is satisfactory.
Hence $N_1(B)$ can be replaced by
\begin{align*}
\frac{8B}{3\zeta(2)}
\sum_{(a,b)\in \A_1}
\widetilde{\delta_{a,b}}
F(a,b)
\Osum_{\substack{k_1\lambda_1\mid a^2-b^2\\ k_1\leq K}}
\Osum_{\substack{k_2\lambda_2\mid a^2+b^2\\ k_2\leq K}}
&\chi\Big(k_1\la_1,k_2\la_2; 
\frac{B}{K}\Big)
\frac{\mu(k_1)\mu(k_2)}{k_1k_2 \phid(k_1k_2\la_1\la_2 
)}, 
\end{align*}
with an acceptable error.
The overall contribution to from values of $k_1,k_2$ such
that $\max\{k_1,k_2\}>K$ is 
\begin{align*}
\ll Z_2^2B
\sum_{(a,b)\in \A_1}
\frac{1}{\max\{a,b\}^2}
\sum_{\substack{k_i\lambda_i\mid a^2+(-1)^ib^2\\ \max\{k_1,k_2\}> K}}
\frac{1}{k_1k_2} 
&\ll \frac{Z_2^2B}{K}
\sum_{(a,b)\in \A_1}
\frac{\tau_3(a^4-b^4)}{\max\{a,b\}^2}.
\end{align*}
An application of Lemma \ref{lem:nair} confirms that this is 
also satisfactory.

We may now conclude that
\begin{align*}
N_1(B)=\frac{8B(1+O(1/\log B))}{3\zeta(2)}
\sum_{(a,b)\in \A_1} &F(a,b)
h_0(a,b;B)
+O\Big(\frac{B(\log
   B)^4}{\log\log B}\Big),
\end{align*}
where
$$
h_0(a,b;B):=\delta_{a,b}
\Osum_{k_1\lambda_1\mid a^2-b^2} \frac{\mu(k_1) }{k_1 \phid(k_1\lambda_1) }
  \Osum_{k_2\lambda_2\mid a^2+b^2}
   \frac{\mu(k_2) }{k_2 \phid(k_2\lambda_2) }
\chi\Big(k_1\la_1,k_2\la_2; \frac{B}{K}\Big).
$$
Recall the definition of $\phis$ from \eqref{eq:funcp}.
For any arithmetic function $f$, we have
\begin{align*}
\Osum_{\substack{k_i\lambda_i\mid N}}
\frac{\mu(k_i) }{k_i }f(k_i\la_i)
=\Osum_{\substack{n\mid N}}f(n)
\sum_{k_i\mid n}\frac{\mu(k_i) }{k_i   }
&=\Osum_{\substack{n\mid N}}\phis(n)f(n).
\end{align*}
It therefore follows that
\begin{align*}
h_0(a,b;B)
&=
\delta_{a,b}
\Osum_{m\mid a^2-b^2} \frac{\phis(m)}{\phid(m) }
\Osum_{m_3\mid a^2+b^2} \frac{\phis(m_3)}{\phid(m_3) }
\chi\Big(m,m_3;\frac{B}{K}\Big)\\
&=
\delta_{a,b}
\sum_{\substack{n\mid a^4-b^4\\ 
m=\odd{n}{a^2-b^2}\\ m_3=\odd{n}{a^2+b^2}}} 
(1*h)(mm_3)
\chi\Big(m,m_3;\frac{B}{K}\Big)\\
&=
\sum_{\substack{n\mid a^4-b^4\\
m_i=\odd{n}{a+(-1)^ib}\\
     ~m_3=\odd{n}{a^2+b^2}}}
(1*h)(n)\chi\Big(m_1m_2,m_3;\frac{B}{K}\Big),
\end{align*}
where $h$ is given by \eqref{eq:h}.  Recall
the inequality $\nu_2(a^2-b^2)\leq Z_2$ satisfied by any~$(a,b)\in
\A_1$.  A little thought  reveals that
$$
h(a,b;B2^{-(Z_2+1)}/K)\leq
h_0(a,b;B)\leq h(a,b;B2^{Z_2+1}/K),
$$
in the notation of \eqref{eq:hi}.

Note that 
$$
\sum_{(a,b)\in \A_1}F(a,b)h(a,b;B2^{Z_2+1}/K)
\ll
\sum_{(a,b)\in \A_1}
\frac{\tau(|a^4-b^4|)}{\max\{a,b\}^{3/2} |a-b|^{1/2}}\ll 
(\log B)^4,
$$
by  Lemma \ref{lem:nair'}.
Bringing everything together we have so far established the upper and
lower bounds
\begin{align*}
N_1(B)&\leq \frac{8B}{3\zeta(2)}
\sum_{(a,b)\in \A_1}F(a,b)h(a,b;B2^{Z_2+1}/K)
  +O\Big(\frac{B(\log
   B)^4}{\log\log B}\Big)\\
N_1(B)&\geq \frac{8B}{3\zeta(2)} 
\sum_{(a,b)\in \A_1}F(a,b)h(a,b;B2^{-(Z_2+1)}/K)
  +O\Big(\frac{B(\log
   B)^4}{\log\log B}\Big),
\end{align*}
We proceed to enlarge the set of allowable $a,b$ slightly, by
handling separately the contribution from $a,b$ such that
$\nu_2(a^2-b^2)>Z_2$. 
The third estimate in Lemma \ref{lem:nair'} ensures that 
this contributes $O(B(\log B)^4/(\log \log B))$, which is
satisfactory. Further applications of 
Lemma \ref{lem:nair'} show similarly 
that it is possible to 
enlarge the set of allowable $a,b$ to include the ranges
$\sqrt{B}/Z_1^c\leq \max\{a,b\}< \sqrt{B}$ and 
$\max\{a,b\}>Z_2^2|a-b|$, and to restrict attention
to $a,b$ for which   
$\min\{a,b\}\leq \max\{a,b\}(1-1/Z_2^2)$.

Finally, we break the
summation over $a,b$ into those for which $a>b$ and those for which
$a<b$, observing that
$$
   F(a,b)+F(b,a)=
\frac{\vol(\mathcal{S}_{b/a})+\vol(\mathcal{S}_{a/b})}{\max\{a,b\}\sqrt{|a^2-b^2|}}.
$$
This therefore allows us to restrict to a 
summation over the set $\A_2$, which thereby
completes the proof of the lemma.
\end{proof}

We now have everything in place to complete the proof of the theorem.
In what follows let us write $Y$ for either of the quantities $B2^{Z_2+1}/K$ or
$B2^{-(Z_2+1)}/K$. Define 
$$
\Sigma_{\theta_1,\theta_2}(Y):=
\sum_{\substack{\theta_1 a\leq b\leq (1-\theta_2)a
\\ a< \sqrt{B}, ~\cp{a}{b}}}
\frac{h(a,b;Y)}{a^{2}},
$$
for any $\theta_1, \theta_2>0$ such that $\theta_1+\theta_2<1$.
Observing that
$f(u)= f(0)+\int_0^{u}f'(t)\d t,$
it now follows that
\begin{equation}\label{eq:champ}
\sum_{(a,b)\in \A_2}\frac{f(b/a)h(a,b;Y)}{a^2}
=
f(0)\Sigma_{Z_2^{-2}, Z_2^{-2}}(Y)
+\int_0^{1-Z_2^{-2}} 
f'(t)
\Sigma_{\max\{t,Z_2^{-2}\}, Z_2^{-2}}(Y) \d t.
\end{equation}
Hence Lemma \ref{lem:farine} renders it   sufficient to estimate
$\Sigma_{\theta_1,\theta_2}(Y)$ asymptotically. 
This is achieved in the following result.

\begin{lemma}\label{lem:sig}
Let
\begin{equation}\label{eq:W0}
W_0:=
\left\{
\mathbf{w}\in \RR_{\geq 0}^4:
\begin{array}{l}
w_1+w_2+w_4\leq 1+2w_3,\\
2w_3+w_4\leq 1+w_1+w_2,\\
3w_4\leq 1+w_1+w_2+2w_3,\\
w_1+w_2+2w_3\leq 1+w_4,\\
\max\{w_1,w_2,w_3\}\leq w_4,\\
w_4\leq 1-w_4
\end{array}
\right\}.
\end{equation}
Then we have
$$
\Sigma_{\theta_1,\theta_2}(Y)=
2\vol(W_0)C^*(1-\theta_1-\theta_2)
(\log B)^4+O\big((\log B)^{3+\varepsilon}\big),
$$
where $C^*$ is given by \eqref{eq:C*}, with
$(L_1,L_2,Q)=(x_1-x_2,x_1+x_2,x_1^2+x_2^2)$ and $g$ given by \eqref{eq:g}.
\end{lemma}

\begin{proof}
Recall the definition \eqref{eq:hi} of $h(a,b;Y)$ and define the
region
$$
W:=
\left\{
\mathbf{w}\in \RR_{\geq 0}^4:
\begin{array}{l}
w_1+w_2+w_4\leq 1+2w_3,\\
2w_3+w_4\leq 1+w_1+w_2,\\
3w_4\leq 1+w_1+w_2+2w_3,\\
w_1+w_2+2w_3\leq 1+w_4
\end{array}
\right\}.
$$
This is contained in $[0,1]^4$.
On recalling the definition \eqref{eq:VV} of
$V_{a,b}(R)$ for $(a,b)\in \A_2$, one easily checks that
\begin{align*}
h(a,b;Y)=
\sum_{\substack{n\mid a^4-b^4\\
d_i=\gcd(n,a+(-1)^ib)\\
     ~d_3=\gcd(n,a^2+b^2)\\
\left(
\frac{\log d_1}{\log Y},
\frac{\log d_2}{\log Y},
\frac{\log d_3}{2\log Y},
\frac{\log a}{\log Y}
\right)\in W
}}
(1*h)(n).
\end{align*}
Recalling \eqref{eq:V0}, it now follows from
Lemma \ref{lem:uniform} that
$$
\Sigma_{\theta_1, \theta_2}(Y)=
2\vol(W_0)C^*(1-\theta_1-\theta_2)
(\log B)^4+O\big((\log B)^{3+\varepsilon}\big),
$$
where $W_0$ is given by \eqref{eq:W0}. 
Here we have applied the lemma with the region
$$
\B=\{\x\in (0,1)^2: \theta_1 x_1\leq x_2\leq (1-\theta_2)x_1\},
$$
which has volume $(1-\theta_1-\theta_2)/2$.
\end{proof}

Recalling the definition of the function \eqref{eq:func}, it
is a routine calculation to verify that
$$
f(u)\ll 
1+ \log(1-u),
\quad
f'(u)\ll (1-u)^{-1}                             
$$
for $u \in (0,1)$, using Lemma \ref{lem:volume} and the
definition \eqref{eq:SBA} of $\mathcal{S}_u$.
In particular it follows that 
$\int_0^{1-1/Z_2^2}|f'(t)|\d t\ll \log Z_2\ll \log\log\log B$.  
Combining this  with Lemma 
\ref{lem:sig} and~\eqref{eq:champ} in 
Lemma~\ref{lem:farine},
and inserting it into Lemma \ref{lem:red-1}, we arrive at
the final bound
\begin{align*}
N_{U,H}(B)=&C B(\log B)^4 +
  O\Big(\frac{B(\log B)^4
}{\log\log B}\Big),
\end{align*}
where
$$
C=\frac{128 \vol(W_0)  C^*}{3\zeta(2)}\Big(f(0)+\int_0^1
(1-u)f'(u)\d u\Big)=
\frac{128\vol(W_0)C^*}{3\zeta(2)}\int_0^1 f(u)\d u.
$$
Finally, we note that
\begin{align*}
\int_0^1 f(u)\d u
&=
\int_0^1\int_{\mathcal{S}_{u}}
\frac{1}{\sqrt{1-u^2}}\d s\d t\d u+
\int_0^1\int_{\mathcal{S}_{1/u}} \frac{1}{\sqrt{1-u^2}}\d s\d t\d u\\
&=
\int_0^1\int_{\mathcal{S}_{u}}
\frac{1}{\sqrt{1-u^2}}\d s\d t\d u+
\int_1^\infty\int_{\mathcal{S}_{u}} \frac{1}{u\sqrt{u^2-1}}\d s\d t\d u\\
&=\sigma_\infty,
\end{align*}
where
\begin{equation}
   \label{eq:siginf}
\sigma_\infty
:=
\int\int\int_{\Big\{\substack{0<\max\{1,u\}
p_{u}(s,t)\leq 1,\\
0<\max\{1,u\}q_{u}(s,t)\leq 1,\\
t,u>0, ~r_{u}(s,t)>0}\Big\}} \frac{1}{\sqrt{|1-u^2|}}\d s\d t\d u.
\end{equation}
This therefore completes the proof of the
asymptotic formula
\begin{equation}
   \label{eq:goal}
  N_{U,H}(B)=
\frac{128\vol(W_0)C^*\sigma_\infty}{3\zeta(2)}
B(\log B)^4+O\Big( \frac{B(\log B)^4
}{\log\log B} \Big),  
\end{equation}
where $C^*$ is given by \eqref{eq:C*}.

\section{Peyre's constant}\label{s:local} 

The purpose of this section is to affirm that the constant in our
asymptotic formula~\eqref{eq:goal} for $N_{U,H}(B)$ agrees with the
Peyre's prediction \cite{p}, as required to complete the proof of the theorem.
In general terms the constant $c_{X,H}$ should be a product of three constants
$\alpha(X),\beta(X),\tau_H(X)$.

The constant $\alpha(X)$ is a rational number defined in terms of the cone
of effective divisors of $X$. Following a suggestion of the referee we
will show that 
\begin{equation}
   \label{eq:alpha}
   \alpha(X)=2\vol(W_0),
\end{equation}
where $W_0$ is given by \eqref{eq:W0}. 
Although we will not make any use of it, one can show by direct 
calculation that 
$\alpha(X)=1/36.$
We begin by recalling some basic facts about the geometry of del Pezzo
surfaces $V\subset \Pfour$ of degree $4$, as described in  
the book by Manin \cite{manin-book}. 
Such $V$ are isomorphic to the blow-up of $\Ptwo$ along a union of
points $p_1,\ldots,
p_5$, no three of which are collinear. 
Let $E_i$ denote the exceptional divisor above $p_i$, for $1\leq i\leq
5$, let $L_{i,j}$ denote the strict transform of the line going
through $p_i$ and $p_j$, for $1\leq i<j\leq 5$, and let $Q$
denote the strict transform of the unique conic going through all $5$
points. These $16$ divisors constitute the famous $16$ lines 
on $V$.
If $\Lambda$ is the strict transform of a line in $\Ptwo$ not
passing through any of the $5$ distinguished  points, then 
a free basis for the geometric Picard group $\Pic_\cQ (V)$ is given by 
$
\{\Lambda, E_1,\ldots,E_5\},
$
on identifying divisors with their
classes in $\Pic_\cQ (V)$.
In fact $E_1,\ldots,E_5$ can be taken to be the classes of a set of
any $5$ lines in $V$ that are mutually skew.
In terms of this basis the anticanonical
divisor class can be written
$
-K_V=3\Lambda -\sum_{i=1}^5 E_i,
$
and furthermore, we have 
$
L_{i,j}=\Lambda-E_i-E_j$ and $Q=2\Lambda-\sum_{i=1}^5 E_i$.
There is an intersection
form $(\cdot,\cdot)$ on $\Pic_\cQ (V)$, which is non-degenerate,
symmetric  and bilinear.  In terms of this intersection form the
divisors $\Lambda, E_1,\ldots,E_5$ 
satisfy the relations
$$
(\Lambda,\Lambda)=1, \quad (\Lambda,E_i)=0, \quad 
(E_i, E_j)=
\begin{cases}
-1, &\mbox{if $i=j$,}\\
0, &\mbox{if $i\neq j$.}
\end{cases}
$$

Returning to $X$, we 
consider the affine subscheme $T\subset \AA_\ZZ^5=\Spec
\ZZ[x,y,z,u,v]$ 
defined by the equation
$$
(u^2-v^2)x^2+(u^2+v^2)y^2=2z^2,
$$
and the conditions $(x,y,z)\neq \mathbf{0}$ and $(u,v)\neq \mathbf{0}$.
The torus $\mathbb{G}_m^2$ acts on $T$ via the morphism of tori
$
(\lambda, \mu)\mapsto (\la,\la,\la\mu,\mu,\mu)
$
from $\mathbb{G}_m^2$ to $\mathbb{G}_m^5$ and the natural action of 
$\mathbb{G}_m^5$ to $\AA_\ZZ^5$. There is an obvious morphism $\pi$
from $T$ to $X$, given by 
$$
(x,y,z,u,v)\mapsto (ux,vy,uy,vx,z),
$$
which makes of $T$ a $\mathbb{G}_m^2$-torsor over
$X$.  In particular $X$ may be seen in two different
ways as a conic bundle surface, a fact that we have already made
critical use of in \S \ref{s:prelim}.
Let 
$\ve_1,\ve_2\in\{-1,+1\}$ and let $i=\sqrt{-1}$. Then a
straightforward calculation reveals that the $16$ lines on $X$ are
given by
\[
\arraycolsep 0pt
\begin{array}{rclrcl}
M_1(\ve_1,\ve_2)&:\quad
&\begin{cases}
u=\ve_1 v,\\
z=\ve_2 uy,
\end{cases}&
M_2(\ve_1,\ve_2)&:\quad
&\begin{cases}
u=\ve_1 i v,\\
z=\ve_2 ux,
\end{cases}\\
\noalign{\vskip0.75ex\penalty 1000}
M_3(\ve_1,\ve_2)&:\quad
&\begin{cases}
x=\ve_1 y,\\
z=\ve_2 ux,
\end{cases} &
M_4(\ve_1,\ve_2)&:\quad
&\begin{cases}
x=\ve_1 i y,\\
z=\ve_2 vy.
\end{cases}
\end{array}
\]
In particular all of the lines split over $\QQ(i)$
and $X$ contains precisely $8$ lines that are defined over $\QQ$.
Let us write $\mathcal{G}=\Gal(\QQ(i)/\QQ)\cong \ZZ/2\ZZ$
for the Galois group of the splitting field.
It is easily checked that 
$$
\begin{array}{lll}
E_1=M_1(1,1), & E_2=M_1(-1,1), & E_3=M_2(1,1),\\
E_4=M_2(-1,1), & E_5=M_3(1,-1),
\end{array}
$$
are  mutually skew. Since $E_1,E_2,E_5$ 
are defined over $\QQ$  and $E_3,E_4$ are
defined over $\QQ(i)$, but are conjugate under the action of
$\mathcal{G}$, so it follows that
$$
\Pic X=(\Pic_\cQ X)^{\mathcal{G}}=\{\Lambda, E_1,E_2,E_3+E_4,E_5\}.
$$
This retrieves the information $\Pic X\cong \ZZ^5$ that was recorded
in the introduction.

Using the intersection form it is now  routine
to show that we can choose the projection of $X$ to
$\Ptwo$ in such a way that we have the equalities 
$$
\begin{array}{lll}
L_{1,2}=M_3(-1,1), & L_{1,3}=M_4(1,1), &
L_{1,4}=M_4(-1,1),\\ 
L_{1,5}=M_1(1,-1), & L_{2,3}=M_4(1,-1), & L_{2,4}=M_4(-1,-1),\\ 
L_{2,5}=M_1(-1,-1), & L_{3,4}=M_3(-1,-1),&
L_{3,5}=M_2(1,-1),\\ L_{4,5}=M_2(-1,-1), & Q=M_3(1,1),&
\end{array}
$$
in the description of the lines. 
Let us identify 
$\Pic X \otimes_\ZZ \RR$ with its dual using the intersection
form.  Let $\Lambda_{\mathrm{eff}}(X)$ denote the convex cone 
in $\Pic X \otimes_\ZZ \RR$ that is generated by the classes of
effective divisors, and let $\Lambda_{\mathrm{eff}}^{\vee}(X)$
denote its dual cone, with respect to the intersection form.
As is well-known,
$\Lambda_{\mathrm{eff}}(X)$ is generated
by the classes $[O]=\sum_{L \in O} L$ associated to each orbit $O$ of the action of 
$\mathcal{G}$ on the $16$ lines. In our setting we have $12$ orbits overall,
given by 
$$
\begin{array}{ll}
O_1(\ve_1,\ve_2)=\{M_1(\ve_1,\ve_2)\}, &
O_2(\ve_1,\ve_2)=\{M_2(\ve_1,\ve_2), M_2(-\ve_1,-\ve_2)\}, \\
O_3(\ve_1,\ve_2)=\{M_3(\ve_1,\ve_2)\}, &
O_4(\ve_1,\ve_2)=\{M_4(\ve_1,\ve_2), M_4(-\ve_1,-\ve_2)\},
\end{array}
$$
for $\ve_1, \ve_2 \in \{-1,+1\}$.
Choosing coordinates $\alpha,\beta, \gamma, \delta, \phi$ for
$E_1,E_2,E_3+E_4,E_5$ and $\Lambda$, it therefore follows from 
\cite{p} that 
\begin{align*}
\alpha(X)
&=\vol
\left\{
(\alpha,\beta,\gamma,\delta,\phi)\in \RR^5:
\begin{array}{l}
\alpha,\beta,\gamma,\delta \geq 0,\\
\phi-\alpha-\delta, 
\phi-\beta-\delta\geq 0,\\
\phi-\alpha-\beta, \phi-\gamma \geq 0,\\ 
2\phi-\alpha-\beta-\gamma-\delta\geq 0, \\
2\phi-2\alpha-\gamma, 
2\phi-2\beta-\gamma, 
2\phi-2\delta-\gamma, \\
3\phi-\alpha-\beta-\gamma-\delta=1
\end{array}
\right\}.
\end{align*}
By our work in \S \ref{s:prelim} there is a symmetry between the two
conic fibrations which allowed us to reduce the analysis to points
with $\max\{a,b\}<\max\{x,y\}$. 
Now $\max\{a,b\}$ corresponds to a fibre of the first
fibration and is thus related to 
$M_1(1,1)+M_1(1,-1)$, whereas $\max\{x,y\}$ 
corresponds to a fibre of the second fibration and is related to 
$M_3(1,1)+M_3(1,-1)$. This symmetry extends to the polyhedron
arising in $\alpha(X)$ and therefore permits us to assume that 
$$
\phi-\delta\leq 2\phi-\alpha-\beta-\gamma,
$$
on multiplying the volume through by $2$. With this additional
constraint one easily checks that the inequalities
$2\phi-2\alpha-\gamma\geq 0$ and $2\phi-2\beta-\gamma\geq 0$ are rendered
superfluous.

There is a direct correspondence between the
parameters $w_1,w_2,w_3,w_4$ 
introduced in \S \ref{s:3} and the exceptional curves on $X$. 
Thus $w_1$, corresponding to $\gcd(n,a-b)$, is related to
$M_1(1,1)$. Likewise $w_2$ corresponds to $\gcd(n,a+b)$ and is related to
$M_1(-1,1)$, and $2w_3$ corresponds to $\gcd(n,a^2+b^2)$ and is 
related to $M_2(1,1)+M_2(-1,1)$.
Finally $w_4$ comes from $a$ and is related to 
$M_1(1,1)+M_1(1,-1)$.  
Thus we may write 
$w_1=\alpha$, $w_2=\beta$, $2w_3=\gamma$ and
$w_4=\phi-\delta$. Eliminating $\phi$ using the equation
$
2\phi=1+w_1+w_2+2w_3-w_4,
$
we are therefore led to the expression \eqref{eq:alpha}.

The constant $\beta(X)$ is equal to the cardinality of
$H^1(\QQ, \Pic_{\cQ} X)$, where $\Pic_{\cQ} X$ is the
geometric Picard group. In the present setting we have
\begin{equation}
   \label{eq:beta}
   \beta(X)=1,
\end{equation}
since $X$ is $\QQ$-birationally trivial.

We now turn to the value of the constant $\tau_H(X)$.
For any place $v$ of $\QQ$ let $\omega_{H,v}$ be 
the usual $v$-adic density of points on the
locally compact space $X(\QQ_v)$.  The Tamagawa number associated to
$X$ and $H$ is then given by
$$
\tau_H(X)
= \lim_{s\rightarrow 1}\big((s-1)^5
L(s,\Pic_{\cQ} X)\big) 
\omega_{H,\infty}\prod_{p}\frac{\omega_{H,p}}{L_p(1,\Pic_{\cQ} 
X)}.
$$
In the present setting it follows from our calculation of $\Pic_{\cQ}
X$ that
$$
L(s,\Pic_{\cQ} X)=\zeta_\QQ(s)^4\zeta_{\QQ(i)}(s)=\zeta_\QQ(s)^5 L(s,\chi),
$$
where $\chi$ is the real non-principal character modulo $4$, whence
\begin{align*}
\tau_H(X)
&= \frac{\pi}{4} 
\omega_{H,\infty}\prod_{p}\Big(1-\frac{1}{p}\Big)^{5}\Big(1-\frac{\chi(p)}{p}\Big)\omega_{H,p}
= \omega_{H,\infty}\prod_{p}\Big(1-\frac{1}{p}\Big)^{5}\omega_{H,p}.
\end{align*}
Recall the definitions \eqref{eq:g}, \eqref{eq:rho_dag-} of $g$ and
$\overline{\rho}_p^\dagger(\nu_1,\nu_2,\nu_3)$, respectively. We will
show that
\begin{equation}
   \label{eq:padic}
   \omega_{H,p}=
\kappa_p\Big(1-\frac{1}{p}\Big)^{-1}\Big(1+\frac{1}{p}\Big)\sum_{\bnu 
\in \ZZ_{\geq 0}^3}
g(p^{\nu_1+\nu_2+\nu_3})\overline{\rho}_p^\dagger(\nu_1,\nu_2,\nu_3),
\end{equation}
with
\begin{equation}
   \label{eq:kappa}
\kappa_p:=\begin{cases}
1, &\mbox{if $p>2$, }\\
4/3, &\mbox{if $p=2$. }
\end{cases}
\end{equation}
Furthermore, we will demonstrate the equality
\begin{equation}
   \label{eq:inf}
\omega_{H,\infty}=16 \sigma_\infty,
\end{equation}
where $\sigma_\infty$ is given by \eqref{eq:siginf}.

Subject to the proofs of \eqref{eq:padic} and
\eqref{eq:inf}, let us now verify that the 
constant in \eqref{eq:goal} coincides with that
predicted by Peyre. Bringing together our expression for~$\tau_H(X)$
with \eqref{eq:alpha} and \eqref{eq:beta},  we find that 
\begin{align*}
c_{X,H}
=\alpha(X)\beta(X)\tau_H(X)=\frac{128\vol(W_0)C^* \sigma_\infty }{3\zeta(2)},
\end{align*}
in the notation of \eqref{eq:C*}.  This therefore confirms the
constant in \eqref{eq:goal} is the same one that is predicted by Peyre.

\subsection{Calculation of $\omega_{H,\infty}$}

In order to prove \eqref{eq:inf}
we will adhere to the method outlined by Peyre \cite{p}.
Taking into account the fact that $\x$ and $-\x$ represent the
same point in $\Pfour$, the archimedean density of points on $X$
is equal to
\begin{align*}
\omega_{H,\infty}
&= \frac{1}{2}
\int_{ \{\x\in\RR^5: \,\Phi_1(\x)=\Phi_2(\x)=0,\, 
\|\x\|\leq 1 \}}\omega_L( \x)\\
&= 8
\int_{ \{\x\in\RR_{>0}^5: \,\Phi_1(\x)=\Phi_2(\x)=0,\,
   \max\{x_0,x_1,x_2,x_3\}\leq 1\}}\omega_L( \x),
\end{align*}
where $\omega_L( \x)$ is the Leray form, and we have used the same sort
of symmetry arguments apparent in Lemma \ref{lem:red-1}.
It will be convenient to parametrise
the points via the choice of variables $x_0,x_1,x_2$.
Observe that
$$
\det \left(
\begin{matrix}
\frac{\partial \Phi_1}{ \partial x_3}&\frac{\partial \Phi_2}{
\partial x_3} \cr\cr  \frac{\partial \Phi_1}{ \partial x_4}
&\frac{\partial \Phi_2}{ \partial x_4}
\end{matrix}
\right)=4x_2x_4.
$$
We will take the Leray form $\omega_L(\x)=(4x_2x_4)^{-1}\d x_0\d x_1\d
x_2$. Recall the definition~\eqref{eq:def_pqr} of $p_u,q_u,r_u$. We
will pass from $x_0,x_1,x_2$ to $(s,t,u)$, with 
$t,u>0$,  via the change of variables
$$
(x_0,x_1,x_2)=(p_{u}(s,\sqrt{|1-u^2|}t), uq_{u}(s,\sqrt{|1-u^2|}t),
q_{u}(s,\sqrt{|1-u^2|}t)).
$$
The Jacobian  of this transformation is
calculated to be
$$
8q_u(s,\sqrt{|1-u^2|}t)r_u(s,\sqrt{|1-u^2|}t)=8x_2x_4,
$$
on noting that
$$
(1-u^2)p_u(s,\sqrt{|1-u^2|}t)^2+(1+u^2)q_u(\sqrt{|1-u^2|}t)^2=2r_u(s,\sqrt{|1-u^2|}t)^2.
$$
A modest pause for thought now reveals that \eqref{eq:inf} holds, as claimed.

\subsection{Calculation of $\omega_{H,p}$}

It will ease notation if we write  $\ZZ/p^k$
for $\ZZ/p^k\ZZ$ throughout this section, for any prime power
$p^k$.  It remains to calculate the value of
$\omega_{H,p}=\lim_{n\to \infty}p^{-3n}N(p^n)$, for any prime $p$,
where $N(p^n)$ denotes the number of $\x\in 
(\ZZ/p^n)^5$ for which $\Phi_1(\x)\equiv 
\Phi_2(\x)\equiv
0 \bmod{p^n}$. The usual way of proceeding at this point would be to try and
interpret $p^{-3n}N(p^n)$ as an explicit function of
$p^{-1}$. This would then entail a parallel calculation of $C^*$ in
\eqref{eq:C*}, in order to check that the values match.
Instead we will try and calculate $N(p^n)$ by mimicking the
steps taken in the proof of the theorem.
Recall the definitions \eqref{eq:bundle}, \eqref{eq:phi2} of $\Phi_1$
and~$\Phi_2$.

It is easy to check that
$\omega_{H,p}=(1-1/p)^{-1}\omega_{H,p}^*$, where 
$\omega_{H,p}^*=\lim_{n\to 
\infty}p^{-3n}N^*(p^n)$, for any prime $p$,
with
$$
N^*(p^n):=\#\{\x\in (\ZZ/p^n)^5 : p\nmid \x, ~\Phi_1(\x)\equiv \Phi_2(\x)\equiv
0 \pmod{p^n}\}.
$$
We will show that
\begin{equation}
   \label{eq:padic*}
   \omega_{H,p}^*=
\kappa_p\Big(1+\frac{1}{p}\Big)\sum_{\bnu \in \ZZ_{\geq 0}^3}
g(p^{\nu_1+\nu_2+\nu_3})\overline{\rho}_p^\dagger(\nu_1,\nu_2,\nu_3),
\end{equation}
where $\kappa_p$ is given by \eqref{eq:kappa}.
This will clearly be enough to establish \eqref{eq:padic}.

Let $\x$ be any vector counted by $N^*(p^n)$.  We claim that there are
there are precisely~$\phi(p^n)$ choices of
$(a,b,x,y,z)\in(\ZZ/p^n)^5$ such that
$p\nmid(a,b)$, $p\nmid
(x,y)$ and
$$
\x\equiv (ax,by,ay,bx,z) \pmod{p^n}.
$$
To see this we may suppose without loss of generality that $p\nmid
x_0$. For any of $\phi(p^n)$ choices of $a\in(\ZZ/p^n)^*$,
the value of $x$ is determined uniquely modulo $p^n$ via the congruence
$x_0\equiv ax \bmod{p^n}$.  But then values of $b$ and $y$ are also
determined exactly modulo $p^n$, which
therefore establishes the claim.

Recall the definition
of $C_{a,b}$ from \eqref{eq:conic}. By a 
convenient abuse of notation we will write
$(x,y,z)\in C_{a,b}(\ZZ/p^n)$ to denote that the underlying
quadratic polynomial is congruent to zero modulo $p^n$.
It now follows that
\begin{align*}
N^*(p^n)&=\frac{1}{\phi(p^n)}
\sum_{\substack{a,b\bmod{p^n}\\
     p\nmid(a,b)}}\#\big\{
(x,y,z)\in C_{a,b}(\ZZ/p^n): p\nmid (x,y)\big\}\\
&=\frac{1}{\phi(p^n)}
\sum_{\nu_1,\nu_2,\nu_3\geq 0}
\sum_{\substack{a,b\bmod{p^n}\\
     p\nmid(a,b)\\ p^{\nu_i}\| a+(-1)^i b\\ p^{\nu_3}\| a^2+b^2}}
\#\big\{(x,y,z)\in C_{a,b}(\ZZ/p^n) : p\nmid (x,y)
\big\},
\end{align*}
where the index $i$ belongs to $\{1,2\}$.
Moreover, in view of the fact that \eqref{eq:xi} is always a solution
of the relevant congruence modulo $p^n$ we may assume in all of our
calculations that the summand is always positive.
Here we recall the convention that $p^\nu\| n$ if and only if
$\nu_p(n)=\nu$.

For given $c,d\in\ZZ$, with $p\nmid cd$, and given $\mu,\nu\geq 0$,
let us define
\begin{equation}
   \label{eq:C^*}
\begin{split}
D_{\mu,\nu}^*(p^n)
&=D_{\mu,\nu}^*(p^n;c,d)\\
&:=p^{-2n}\#
\Big\{(x,y,z)\in (\ZZ/p^n)^3:
\begin{array}{l}
p\nmid (x,y),\\
cp^\mu x^2+dp^\nu y^2\equiv
2z^2 \bmod{p^n}
\end{array}
\Big\}.
\end{split}
\end{equation}
It will be convenient to define
$D_{\mu,\nu}(p^n)$ as for $D_{\mu,\nu}^*(p^n)$, but without the
condition that $p\nmid(x,y)$.
A little thought reveals that we have
\begin{equation}
   \label{eq:little}
D_{\mu,\nu}^*(p^n)
=D_{\mu,\nu}(p^n)-p^{-1}D_{\mu,\nu}(p^{n-2})
\end{equation}
for each $n\geq 2$ and $\mu,\nu\geq 0$.
Our calculation of $D_{\mu,\nu}^*(p^n)$ will 
depend intimately on the values of
$\mu,\nu,c,d$ and whether or not $p$ is odd.
We have collected together the necessary information in the following result.

\begin{lemma}\label{lem:final}
Let $p>2$. Then  we have
\begin{equation}
   \label{eq:00>2}
   D_{0,0}^*(p^n)=1-\frac{1}{p^2},
\end{equation}
and furthermore,
\begin{equation}
   \label{eq:mu0>2}
D_{\mu,0}^*(p^n)=\Big(1-\frac{1}{p}\Big)\Big(\mu\Big(1-\frac{1}{p}\Big)+1+\frac{1}{p}\Big)+O(p^{-n/2+\mu/4+1/2}).
\end{equation}
if $\mu\geq 1$ and $(\frac{2d}{p})=1.$
Let $p=2$. Then we have
\begin{equation}
   \label{eq:2}
   D_{\mu,\nu}^*(2^n)=\begin{cases}
1, & \mbox{if $\mu=\nu=0$ and $c+d\equiv 0,2 \bmod{8}$},\\
\mu, & \mbox{if $\mu\geq 3$, $\nu=1$ and $2^{\mu-1}c+d\equiv 1 \bmod{8}$.}
\end{cases}
\end{equation}
\end{lemma}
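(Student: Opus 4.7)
The plan is to compute $D^*_{\mu,\nu}(p^n)$ by direct enumeration of solutions, using the recursion \eqref{eq:little} whenever convenient to exchange the primitive count for the raw count. For \eqref{eq:00>2}, with $p > 2$ and $\mu = \nu = 0$, the form $cx^2 + dy^2 - 2z^2$ has unit discriminant and so defines a non-singular conic $C$ over $\mathbb{F}_p$. Since every non-degenerate ternary quadratic form over a finite field of odd characteristic is isotropic (Chevalley--Warning), $C$ has exactly $p+1$ projective $\mathbb{F}_p$-points, hence $p^2 - 1$ primitive affine solutions. Because the gradient of the form is non-vanishing at each primitive zero modulo $p$, Hensel's lemma shows that each such solution lifts to exactly $p^{2(n-1)}$ solutions modulo $p^n$, giving $D^*_{0,0}(p^n) = (p^2-1)p^{-2} = 1 - p^{-2}$.

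For \eqref{eq:mu0>2}, where $\mu \geq 1$ and the form is singular modulo $p$, direct Hensel lifting fails, so one exploits the hypothesis $(2d/p) = 1$: pick $\alpha \in \ZZ_p^\times$ with $2\alpha^2 \equiv d \pmod{p^n}$ via Hensel. The substitution $u = \alpha y + z$, $v = \alpha y - z$ is a bijection of $(\ZZ/p^n\ZZ)^2$ and converts the congruence into $uv \equiv -\tfrac{c}{2} p^\mu x^2 \pmod{p^n}$. Stratifying by $i = \nu_p(x)$ and $j = \nu_p(u)$, one notes that for $2i + \mu < n$ the number of $(u,v)$ with $uv \equiv A \pmod{p^n}$ of prescribed valuations is explicit. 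Summing $p^{-2n}$ times these contributions produces a geometric series in $i$ of ratio $p^{-2}$ whose leading contributions combine to $(1-1/p)(\mu(1-1/p) + 1 + 1/p) = \mu(1-1/p)^2 + 1 - 1/p^2$ once the coprimality $p \nmid (x,y)$ is reinstated via \eqref{eq:little}. The boundary range $2i + \mu \geq n$ yields the error $O(p^{-(n-\mu-2)})$.

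Finally, for \eqref{eq:2} with $p = 2$, Hensel must be replaced by explicit $2$-adic analysis. In the sub-case $\mu = \nu = 0$: if $x,y$ are both odd then $cx^2 + dy^2 \equiv c + d \pmod 8$, while $2z^2 \in \{0, 2\} \pmod 8$ according to the parity of $z$, so the condition $c + d \equiv 0, 2 \pmod 8$ is exactly the solvability criterion at level $8$, and a careful lift (together with the off-diagonal contributions $x$ odd, $y$ even etc.) accounts for precisely $2^{2n}$ primitive solutions. In the sub-case $\mu \geq 3$, $\nu = 1$, the congruence $2^\mu c x^2 + 2 d y^2 \equiv 2 z^2 \pmod{2^n}$ forces $z$ to be even; writing $z = 2z'$ reduces the level by one, and the hypothesis $2^{\mu-1}c + d \equiv 1 \pmod 8$ is the residual solvability condition. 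Stratification by $\nu_2(x) \in \{0, 1, \ldots, \mu-1\}$ then produces exactly $\mu$ levels of equal contribution, yielding $D^*_{\mu,1}(2^n) = \mu$. The main obstacle throughout is the book-keeping for the boundary regime in \eqref{eq:mu0>2}, ensuring that the error term inherits exactly the claimed dependence on $\mu$, together with the delicate recognition that the congruences on $c + d$ and $2^{\mu-1}c + d$ are both precisely the local solvability criteria at level $8$.
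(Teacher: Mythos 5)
Your treatment of the odd primes is essentially sound. For \eqref{eq:00>2} you argue exactly as the paper does (point count on a smooth conic over $\mathbb{F}_p$ plus Hensel). For \eqref{eq:mu0>2} you take a genuinely different route: instead of the paper's double recursion (Hensel on the stratum $p\nmid y$, then descent in $(\mu,n)$ combined with \eqref{eq:little}), you write $d=2\alpha^2$, pass to the hyperbola $uv\equiv -2^{-1}cp^\mu x^2$, and count by valuations; summing $(\mu+2i+1)p^{-i}(1-1/p)^2$ over $i$ and applying \eqref{eq:little} does reproduce the main term $(1-1/p)(\mu(1-1/p)+1+1/p)$, so this is a legitimate alternative. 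The only caveat is your error term: the boundary regime $2i+\mu\geq n$ (where the congruence degenerates to $uv\equiv 0$) and the truncated tail of the series give $O\big(n\,p^{-(n-\mu)/2}\big)$, not the bound you assert; this is harmless for the application, since only the limit $n\to\infty$ with $\mu$ fixed is used, but it is not the exponent claimed.

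The genuine gap is in the case $p=2$, i.e.\ \eqref{eq:2}. First, for $\mu\geq 3$, $\nu=1$ your structural claims are wrong: the congruence $2^{\mu}cx^2+2dy^2\equiv 2z^2\bmod{2^n}$, i.e.\ $2^{\mu-1}cx^2+dy^2\equiv z^2\bmod{2^{n-1}}$, does \emph{not} force $z$ to be even — in the main stratum $2\nmid y$ one has $z$ odd, and $z$ is even precisely when $y$ is even. Moreover the answer $\mu$ does not arise as ``$\mu$ equal levels'' indexed by $\nu_2(x)$: the correct decomposition is (contribution from $2\nmid y$) plus a recursive contribution from $2\mid y$, which evaluate to $4$ and $\mu-4$ for $\mu\geq 4$ (and $2$ and $1$ for $\mu=3$), the second piece requiring an induction in $\nu_2(y)$ (the paper's quantity $N_\mu$ and the recursion $N_\mu(n)=2+N_{\mu-2}(n-2)$), with the cases $\mu=3,4,5$ treated separately; note also that for $\mu=3$ the hypothesis gives $d\equiv 5\bmod 8$, so $d$ is not a $2$-adic square and a hyperbola substitution is not even available there. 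Second, for $\mu=\nu=0$ you only assert the count: after observing that $x,y$ must both be odd, the substance is to show that exactly half of the odd pairs $(x,y)$ satisfy $(cx^2+dy^2)/2\equiv 1\bmod 8$ (resp.\ the analogous condition when $c+d\equiv 0\bmod 8$), each admitting exactly the right number of lifts in $z$; the paper does this via the substitutions $x=1+2x'$, $y=1+2y'$ and the elementary counts $S_a(2^m)$ and $T_a(2^m)$, and some such argument must be supplied — ``a careful lift'' is precisely where the work lies, since the naive Hensel step fails at $p=2$.
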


We postpone the proof of this result until later.
For the moment we return to the expression for $N^*(p^n)$ that we are
trying to evaluate.
For any prime $p$ and $\nu_1,\nu_2,\nu_3\geq 0$ recall the definitions
\eqref{eq:rho_dag} and \eqref{eq:rho_dag-} of
$\rho_p^\dagger(\nu_1,\nu_2,\nu_3)$ and
$\overline{\rho}_p^\dagger(\nu_1,\nu_2,\nu_3)$, respectively.
Using \cite[Lemme~2]{L1L2Q} one confirms that 
$
\rho_p^\dagger(\nu_1,\nu_2,\nu_3)=O(p^{\nu_1+\nu_2+\nu_3+2}).
$
We have
\begin{align*}
N^*(p^n)
&=\frac{p^{4n}}{\phi(p^n)}
\sum_{\nu_1,\nu_2,\nu_3\geq 0}\frac{1}{p^{2\nu}}
\sum_{\substack{a,b\bmod{p^{\nu}}\\
     p\nmid(a,b)\\ p^{\nu_i}\| a+(-1)^i b\\ p^{\nu_3}\| a^2+b^2}}
\frac{\#\big\{(x,y,z)\in C_{a,b}(\ZZ/p^n) : p\nmid (x,y)
\big\}}{p^{2n}},
\end{align*}
where we have written $\nu=\nu_1+\nu_2+\nu_3+1$ for convenience
and the inner cardinality  only depends on congruences modulo $p^\nu$.

Beginning with the case $p>2$ let us write
$$
a^2-b^2=p^{\nu_1+\nu_2}c, \quad
a^2+b^2=p^{\nu_3}d,
$$
for $c,d$ coprime to $p$. Since $p\nmid
(a,b)$ it clearly follows that at most one of $\nu_1,\nu_2,\nu_3$ can
be non-zero. When they are all zero \eqref{eq:00>2} reveals
that the summand here is~$(1-1/p^2)$. When $\nu_1\geq 1$ and $\nu_2=\nu_3=0$ we
know that $2d$ is a quadratic residue modulo $p$ since $C_{a,b}(\ZZ/p^n)$ is
always non-empty.  But then  we may apply \eqref{eq:mu0>2} to estimate the
summand. This same estimate also covers the case in which $\nu_2$ or
$\nu_3$ is positive.
When $p>2$ we therefore deduce that
$$
\lim_{n\rightarrow \infty}\frac{N^*(p^n)}{p^{3n}}
=
\sum_{\nu_1,\nu_2,\nu_3\geq 0}
\Big((\nu-1)\Big(1-\frac{1}{p}\Big)+1+\frac{1}{p}\Big)
\overline{\rho}_p^\dagger(\nu_1,\nu_2,\nu_3)
$$
with $\nu=\nu_1+\nu_2+\nu_3+1$. Recall the definition \eqref{eq:g} of
$g$ for odd prime powers. It is not hard to see that
$$
\Big(1+\frac{1}{p}\Big)g(p^{\nu-1})=
(\nu-1)\Big(1-\frac{1}{p}\Big)+1+\frac{1}{p}.
$$
It therefore follows that \eqref{eq:padic*} holds 
when $p>2$, with $\kappa_p=1$.

It remains to deal with the case $p=2$.  Arguing as for $p>2$ we write
$$
a^2-b^2=2^{\nu_1+\nu_2}c, \quad
a^2+b^2=2^{\nu_3}d,
$$
for odd $c,d$.   Since $2\nmid
(a,b)$ it now follows that either $\nu_1=\nu_2=\nu_3=0$, or
 else~$\nu_1+\nu_2\geq 3$ and $\nu_3=1$.  In the latter case we have
$\nu_1\geq 2$ and $\nu_2=1$, or
$\nu_2\geq 2$ and $\nu_1=1$.
In every case we know that
$C_{a,b}(\ZZ/2^n)$ is non-empty, and in fact has a solution
$(x,y,z)\equiv (1,1,a)\bmod{2^n}$.
Suppose first that $\nu_1=\nu_2=\nu_3=0$. Then
precisely one of $a,b$ must be even, and we automatically have
$c+d\equiv 0,2 \bmod{8}$.  But then \eqref{eq:2} implies that
the innermost summand in our expression for $N^*(2^n)$ is
$1$.  When $\nu_1\geq 2$ and $\nu_2=\nu_3=1$ we may apply \eqref{eq:2}
with $\mu=\nu_1+1$ to deduce that the summand is
$\nu_1+1=\nu_1+\nu_2+\nu_3-1.$  The same
applies  when $\nu_2\geq 2$ and~$\nu_1=\nu_3=1$.
Putting this all together we deduce that
\begin{align*}
\lim_{n\rightarrow \infty} \frac{N^*(2^n)}{2^{3n}}
&=
2\Big(\overline{\rho}_2^\dagger(0,0,0)+
\sum_{\substack{\nu_1,\nu_2,\nu_3\geq 0\\ \nu_1+\nu_2+\nu_3\geq 1}}
(\nu_1+\nu_2+\nu_3-1)\overline{\rho}_2^\dagger(\nu_1,\nu_2,\nu_3)
\Big)\\
&=2 \sum_{\nu_1,\nu_2,\nu_3\geq 0}
\max\{1,\nu_1+\nu_2+\nu_3-1\}\overline{\rho}_2^\dagger(\nu_1,\nu_2,\nu_3).
\end{align*}
Recall the definition \eqref{eq:g} of
$g$ for even prime powers. It is now clear that \eqref{eq:padic*} holds
when $p=2$, with $\kappa_2=4/3$.

\begin{proof}[Proof of Lemma \ref{lem:final}]
Throughout our proof of the lemma we can and will assume that $n$ is large
compared with $\mu$ and $\nu$.

Assume that $p>2$ and let $\mu=\nu=0$. It follows from
Hensel's lemma that
$$
D_{0,0}^*(p^n)=D_{0,0}^*(p)=p^{-2}M_{0,0}^*(p),
$$
where $M_{0,0}^*(p)$
is the number of
$(x,y,z)\in (\ZZ/p)^3$ such that $p\nmid(x,y)$ and
$c x^2+d y^2\equiv 2z^2 \bmod{p}$. But this is just the number of 
$\FF_p$-points on $\PP_{\FF_p}^1$, since a conic with a
$\FF_p$-point is isomorphic to $\PP_{\FF_p}^1$.
Hence $M_{0,0}^*(p)=p^2-1$ and \eqref{eq:00>2} is rendered 
obvious.

We now turn to the calculation of $D_{\mu,0}^*(p^n)$, for $\mu\geq 1$,
for which we will assume that
$
   (\frac{2d}{p})=1,
$
so that $2d$ can be lifted to a quadratic residue modulo $p^n$.
Hence
$$
D_{\mu,0}^*(p^n)=p^{-2n}\#
\big\{(x,y,z)\in (\ZZ/p^n)^3:
p\nmid (x,y),~ 2cp^\mu x^2+ y^2\equiv
z^2 \bmod{p^n}
\big\}.
$$
Similarly for  $D_{\mu,0}(p^n)$. The contribution 
to the right hand side from those
$x,y,z$ for which $p\nmid y$ is
$
2(1-1/p).
$
This follows from an argument based on Hensel's lemma and an easy
analysis of the corresponding congruence modulo $p$.
The contribution from those $x,y,z$ for which
$p\mid y$ is $0$ if $\mu=1$ and
\begin{align*}
&p^{-2n}
\#
\big\{(x,y,z)\in (\ZZ/p^n)^*\times (\ZZ/p^{n-1})^2:
2cp^\mu x^2+ p^2y^2\equiv
p^2z^2 \bmod{p^n}
\big\},
\end{align*}
if $\mu\geq 2$. Assuming $\mu\geq 2$ we therefore obtain the
contribution
\begin{align*}
p^{4-2n}
&\#
\big\{(x,y,z)\in (\ZZ/p^{n-2})^3: 2cp^{\mu-2} x^2+ y^2\equiv
z^2 \bmod{p^{n-2}}
\big\}\\
&-
p^{3-2n}
\#
\big\{(x,y,z)\in (\ZZ/p^{n-2})^3: 2cp^{\mu} x^2+ y^2\equiv
z^2 \bmod{p^{n-2}}
\big\}.
\end{align*}
But this is just $D_{\mu-2,0}(p^{n-2})-p^{-1}D_{\mu,0}(p^{n-2})$.
It therefore follows that
$$
D_{\mu,0}^*(p^n)=
\begin{cases}
2(1-1/p), & \mbox{if $\mu=1$,}\\
2(1-1/p)+D_{\mu-2,0}(p^{n-2})-p^{-1}D_{\mu,0}(p^{n-2}), & \mbox{if
   $\mu\geq 2$.}
\end{cases}
$$
Recalling \eqref{eq:little}, we have
$$
D_{1,0}(p^n)=D_{1,0}^*(p^n)+p^{-1}D_{1,0}(p^{n-2})=
2(1-1/p)+p^{-1}D_{1,0}(p^{n-2}),
$$
and
$$
D_{\mu,0}(p^n)=2(1-1/p)+ D_{\mu-2,0}(p^{n-2}),
$$
if $\mu\geq 2$.
In view of the fact that $D_{1,0}(p)=2$ and $D_{1,0}(1)=1$, we 
see that 
$D_{1,0}(p^{2n+1})=2$ and 
$D_{1,0}(p^{2n})=2-p^{-n}$. Hence 
$$
D_{1,0}(p^n)=2+O(p^{-n/2}).
$$
Moreover, on noting that
$D_{0,0}(p)=2-1/p^2$ and $D_{0,0}(1)=1$,
\eqref{eq:little} and \eqref{eq:00>2} give
\begin{align*}
D_{0,0}(p^{2n+1})
&=1+1/p+\frac{(2-2/p-1/p^2)}{p^n} = 1+1/p+O(p^{-(2n+1)/2+1/2})\\
D_{0,0}(p^{2n})
&=1+1/p-1/p^{n+1} = 1+1/p+O(p^{-(2n)/2-1}).
\end{align*}
Armed with these expression we can conclude that
$$
D_{\mu,0}(p^{n})
=\mu(1-1/p)+ 1+1/p+O(p^{-n/2+\mu/4+1/2}),
$$
from which \eqref{eq:mu0>2} is an easy consequence.

We now examine the case $p=2$.
Recall the definition \eqref{eq:C^*} of
$D_{\mu,\nu}^*(2^n)$ and the corresponding definition of
$D_{\mu,\nu}(2^n)$.  Let $\mu=\nu=0$, under which hypothesis we will
assume that
\begin{equation}
   \label{eq:assume;1}
c+d\equiv 0,2 \pmod{8}.
\end{equation}
Our argument
will differ according to which of $0$ or $2$ it is that $c+d$ is
congruent to modulo $8$. Beginning with the case 
$c+d\equiv 2 \bmod{8}$, it is not hard to see that
$x,y,z$ are all necessarily
odd in the definition of $D_{0,0}^*(2^n)$. Writing $x=1+2x', y=1+2y'$ and
$z=1+2z'$ we deduce that
$$
D_{0,0}^*(2^n)=2^{-2n}\#
\big\{(x',y',z')\in (\ZZ/2^{n-1})^3:  f(x',y')
\equiv {z'}+{z'}^2 \bmod{2^{n-2}}
\big\},
$$
where we have set $f(x',y')=2e +cx'(1+x')+dy'(1+y')$ and
$2e=(c+d-2)/4$.  For any $a\in \ZZ$ and any 
$m\geq 1$, let us denote by $S_a(2^m)$
the number of $x\in \ZZ/2^m$  for which $x(x+1)\equiv a
\bmod{2^m}$.  An easy lifting argument reveals that
$$
S_a(2^m)=S_a(2)=
\begin{cases}
0, &\mbox{if $2\nmid a$,}\\
2, &\mbox{if $2\mid a$}.
\end{cases}
$$
Varying $x',y'$ and applying the latter equality to estimate the
number of relevant $z'$, we may now conclude that
$D_{0,0}^*(2^n)
=2^{-2n}\cdot 2^{n-1}\cdot 2^{n-1}\cdot 4= 1$ when $c+d\equiv
2\bmod{8}$. Turning to the case
$c+d\equiv 0\bmod{8}$, we conclude that
$x,y$ are both necessarily
odd in the definition of $D_{0,0}^*(2^n)$ and $z$ 
is even. Writing $x=1+2x', y=1+2y'$ and~$z=2z'$ we deduce that
$$
D_{0,0}^*(2^n)=2^{-2n}\#
\big\{(x',y',z')\in (\ZZ/2^{n-1})^3:
f(x',y')\equiv 2{z'}^2 \bmod{2^{n-2}}
\big\},
$$
where $f(x',y')$ is as above but this time with $2e=(c+d)/4$.
We now conclude as previously. Altogether we have shown that
\eqref{eq:2} holds with $\mu=\nu=0$, under the 
assumption that \eqref{eq:assume;1} holds.

We proceed to consider $D_{\mu,1}^*(2^n)$, for $\mu\geq 3$, observing that
\begin{align*}
D_{\mu,1}^*(2^n)
&=2^{-2n}\#
\big\{(x,y,z)\in (\ZZ/2^n)^3:
2\nmid (x,y), ~2^\mu cx^2+ 2dy^2\equiv
2z^2 \bmod{2^n}
\big\}\\
&=2^{3-2n}\#
\Big\{(x,y,z)\in (\ZZ/2^{n-1})^3:
\begin{array}{l}
2\nmid (x,y),\\
2^{\mu-1} cx^2+ dy^2\equiv
z^2 \bmod{2^{n-1}}
\end{array}
\Big\}.
\end{align*}
We will make the assumption that
\begin{equation}
   \label{eq:assume;2}
2^{\mu-1}c+d\equiv 1 \pmod{8}.
\end{equation}
For any odd $a\in \ZZ$ and any $m\geq 3$, let $T_a(2^m)$
denote the number of $x\in \ZZ/2^m$  for which $x^2 \equiv a
\bmod{2^m}$.  Using the structure of the group $(\ZZ/2^m)^*$ 
it follows that 
$$
T_{a}(2^m)=
\begin{cases}
0, &\mbox{if $2\nmid a$ and $a\not \equiv 1 \bmod{8}$,}\\
4, &\mbox{if $a \equiv 1 \bmod{8}$},
\end{cases}
$$
for $m\geq 3$. 

Let us consider the contribution from those
$x,y,z$ for which $2\nmid y$, appealing to \eqref{eq:assume;2} where
necessary.  For each
$x,y\in \ZZ/2^{n-1}$ such that $2\nmid y$ and
$2^{\mu-1} cx^2+ dy^2\equiv 1 \bmod{8}$, our calculation of
$T_a(2^{n-1})$ shows that there are exactly $4$ choices
for $z$.
Altogether, we therefore obtain the contribution
$2^{3-2n}\cdot 2^{n-2}\cdot 2^{n-2}\cdot 4=2$
 when $\mu=3$, and
$2^{3-2n}\cdot 2^{n-1}\cdot 2^{n-2}\cdot 4=4$ when $\mu\geq 4$.
The contribution from those $x,y,z$ for which
$2\mid y$ is clearly
\begin{align*}
2^{7-2n}\#
\big\{(x,y,z)\in (\ZZ/2^{n-3})^3: 
2\nmid x,~
2^{\mu-3} cx^2+ dy^2\equiv
z^2 \bmod{2^{n-3}}
\big\}=N_{\mu},
\end{align*}
say. We will show that
\begin{equation}
   \label{eq:duck}
   N_{\mu}=\begin{cases}
1, &\mbox{if $\mu=3$,}\\
\mu-4, &\mbox{if $\mu\geq 4$.}
\end{cases}
\end{equation}
Together with the contribution from the previous treatment, this will
suffice to show that \eqref{eq:2} holds when $\mu\geq 3$ and $\nu=1$,
under the hypothesis  \eqref{eq:assume;2}.

Turning to the proof of \eqref{eq:duck}, suppose first that
$\mu=3$. Then $d\equiv 5\bmod{8}$, by~\eqref{eq:assume;2}, and
$$
N_3=
2^{7-2n}\#
\big\{(x,y,z)\in (\ZZ/2^{n-3})^3:
2\nmid x,~
cx^2+ dy^2\equiv
z^2 \bmod{2^{n-3}}
\big\}.
$$
When $2\mid y$ we note that for any
$x,y\in \ZZ/2^{n-3}$ such that $2\nmid x$ and $cx^2+ dy^2\equiv 1
\bmod{8}$, there are $4$ choices for $z$.
The condition modulo $8$ is clearly equivalent to~$c+ 5y^2\equiv 1 \bmod{8}$. In
particular, assuming that $c\equiv 1\bmod{4}$, we get an overall contribution of
$2^{7-2n}\cdot 2^{n-4}
\cdot 2^{n-5}\cdot 4=1$. Alternatively, the case $2\nmid y$
contributes the same amount via a similar argument, but this time
under the hypothesis $c\equiv 3 \bmod{4}$. 
Together this confirms that $N_3$ is indeed $1$.

The case $\mu=4$ is easily seen to be impossible, 
whence $N_4=0$. Turning to the case
$\mu=5$, for which \eqref{eq:assume;2} implies that
$d\equiv 1 \bmod{8}$, we deduce that both $y$ and~$z$ must be even. Hence it follows
that
$$
N_5=
2^{11-2n}\#
\big\{(x,y,z)\in (\ZZ/2^{n-5})^3:
2\nmid x, ~cx^2+ y^2\equiv z^2 \bmod{2^{n-5}}
\big\}.
$$
The argument now goes as for the case $\mu=3$, with a separate
analysis of the cases~$2\mid y$ and $2\nmid y$. Each contributes $1$
to  $N_5$, but only one such case arises according to
whether $c$ is congruent to $1$ or $3$ modulo $4$.
Hence $N_5=1$.

Finally let us turn to the case $\mu\geq 6$, for which we still
have $d\equiv 1 \bmod{8}$ by~\eqref{eq:assume;2}.  It will now be convenient
to write $N_\mu=N_\mu(n)$ to indicate the dependence on $n$ in the
definition.  Now either $y,z$ are both odd or they are both
even. Since $2^{\mu-3}cx^2+dy^2\equiv 1 \bmod{8}$ whenever $\mu\geq 6$
and $y$ is odd, so it follows that the contribution to $N_{\mu}(n)$
from odd $y$ is $2^{7-2n}\cdot 2^{2(n-4)}\cdot 4=2$, by our expression
for~$T_a(2^m)$. The contribution
from even $y$ is easily seen to be $N_{\mu-2}(n-2)$.
But then induction on $\mu$ reveals that the contribution from even
$y$ is $\mu-6$. Together the two contributions suffice to ensure that
\eqref{eq:duck} holds when $\mu\geq 6$.
\end{proof}


\begin{thebibliography}{99}

\bibitem{b-t} V.V. Batyrev and Y. Tschinkel,
Tamagawa numbers of polarized algebraic varieties. {\em  Ast\'erisque}  {\bf
251} (1998), 299--340.


\bibitem{dp5} R. de la Bret\`eche,  Nombre de points de hauteur
born\'ee sur les surfaces de
del Pezzo de degr\'e $5$.  {\em  Duke Math. J.} {\bf 113} (2002),
421--464.

\bibitem{nair} R. de la Bret\`eche and T.D. Browning, Sums of
   arithmetic functions over values of binary forms.  {\em Acta Arith.}
   {\bf 125} (2007), 291--304.

\bibitem{L1L2Q} R. de la Bret\`eche and T.D. Browning,
Le probl\`eme des diviseurs pour des formes binaires de degr\'e $4$.
{\em J.\ reine angew. Math.}
{\bf 646} (2010), 1--44.

\bibitem{bbp} R. de la Bret\`eche, T.D. Browning and E. Peyre,
On Manin's conjecture for a family of Ch\^atelet surfaces.
{\em Submitted}, 2010.


\bibitem{ferran} T.D. Browning,
{\em Quantitative arithmetic of projective varieties}.
Progress in Math.\ {\bf 277}, Birkh\"auser, 2009. 

\bibitem{bhb} T.D. Browning and D.R. Heath-Brown, Counting rational
points on hypersurfaces.
{\em J.\ reine angew. Math.} {\bf 584} (2005), 83--115.

\bibitem{ct-4}
J.-L. Colliot-Th\'el\`ene,
Surfaces rationnelles fibr\'ees en coniques
de degr\'e $4$.
{\em S\'eminaire de Th\'eorie des Nombres, Paris 1988--1989},  43--55,
Progr. Math. {\bf  91}, Birkh\"auser, 1990.


\bibitem{ct1}
J.-L. Colliot-Th\'el\`ene, J.-J. Sansuc and P. Swinnerton-Dyer,
Intersections of two quadrics and Ch\^atelet surfaces. I.
{\em J.\ reine angew. Math.}  {\bf 373}  (1987), 37--107.



\bibitem{daniel}
S. Daniel, On the divisor-sum problem for binary forms.
{\em J.\ reine angew. Math.}  {\bf 507}  (1999), 107--129.





\bibitem{f-m-t}
J. Franke, Y.I. Manin and Y. Tschinkel,
Rational points of bounded height on {F}ano varieties.
{\em Invent.\ Math.} {\bf 95} (1989), 421--435.



\bibitem{square} D.R. Heath-Brown, Diophantine approximation with square-free
numbers. {\em Math.\ Zeit.} {\bf 187} (1984), 335--344.

\bibitem{annal} D.R. Heath-Brown,
The density of rational points on curves and surfaces. {\em
Annals of Math.} {\bf 155} (2002), 553--595.


\bibitem{isk} V.A. Iskovskikh, Propri\'et\'es birationnelles des
   surfaces de degr\'e $4$ dans $\PP_k^4$.
{\em Math.\ USSR-Sbornik} {\bf 17} (1972), 575--577.

\bibitem{iw-kow}
H. Iwaniec and E. Kowalski, {\em Analytic number theory}. 
American Math.\ Society, 2004. 

\bibitem{fok}
F.-S. Leung,
Manin's conjecture on a nonsingular quartic del Pezzo surface.  
{\em Acta Arith.}  {\bf 136}  (2009), 177--199. 

\bibitem{LS} D.J. Lewis and A. Schinzel,
Quadratic Diophantine equations with parameters.
{\em Acta Arith.} {\bf 37} (1980), 133--141. 

\bibitem{manin-book} Y.I. Manin,
{\em Cubic forms}, 2nd ed., North-Holland Math. Library, vol. 4,
North-Holland, Amsterdam, New York and Oxford, 1986.

\bibitem{nagell} T. Nagell,
{\em Introduction to number theory}. 2nd ed., Chelsea, 1964.

\bibitem{mnair} M. Nair,
Multiplicative functions of polynomial values in short intervals.
{\em Acta Arith.} {\bf 62} (1992), 257--269.


\bibitem{p}
E. Peyre,
Hauteurs et nombres de Tamagawa sur les
vari\'et\'es de Fano.
{\em Duke Math.\ J.}  {\bf 79} (1995),
101--218.



\bibitem{serre}
J.-P. Serre, Sp\'ecialisation des \'el\'ements de
   $\mathrm{Br}_2(\QQ(T_1,\ldots,T_n))$. {\em C.\ R.\ Acad.\ Sci.\ Paris}
{\bf 311} (1990), 397--402.



\bibitem{swd-prog}
P. Swinnerton-Dyer,
Diophantine equations: progress and problems.
{\em Arithmetic of higher-dimensional algebraic varieties (Palo Alto,
CA, 2002)}, 3--35,
Progr.\ Math. {\bf 226}, Birkh\"auser, 2004.


\bibitem{ten}
G. Tenenbaum,
{\em Introduction to analytic and probabilistic number theory}.
Cambridge Studies in Advanced Math.\ {\bf 46}, Cambridge Univ.\ Press, 1995.


\end{thebibliography}
\end{document}